\documentclass[12pt,a4paper]{article}
\usepackage{amssymb, amsfonts, amsthm, amsmath, amsbsy, bbm, fullpage, mathrsfs, mathtools}

\newtheorem{theorem}{Theorem}[section]
\newtheorem{proposition}[theorem]{Proposition}

\newtheorem{lemma}[theorem]{Lemma}

\newtheorem{remark}[theorem]{Remark}

\usepackage[hidelinks]{hyperref}

\title{Collision properties of the\\four-dimensional random walk trace}
\author{D.~A.~Croydon\footnote{Research Institute for Mathematical Sciences, Kyoto University, croydon@kurims.kyoto-u.ac.jp.}, D.~Shiraishi\footnote{Graduate School of Informatics, Kyoto University, shiraishi@acs.i.kyoto-u.ac.jp.}, S.~Watanabe\footnote{Research Institute for Mathematical Sciences, Kyoto University, swatanab@kurims.kyoto-u.ac.jp.}}
\date{}

\begin{document}

\maketitle

\begin{abstract}
We consider collisions of multiple random walks on the trace of a simple random walk on the four-dimensional integer lattice. For two independent walks (in continuous time), we apply a result of Noda to derive a scaling limit for the collision time process. For three independent walks (in discrete time), we demonstrate infinitely many triple collisions occur.
\end{abstract}

\tableofcontents

\section{Introduction}

In recent years, collision properties of random walks on graphs have been intensively studied. For a given locally-finite infinite graph, a fundamental question is whether two random walks meet infinitely often or not. Whilst a convenient necessary and sufficient condition for the infinite collision property is yet to be established, the property is easily seen to be equivalent to recurrence for transitive graphs \cite{KrishPeres}, and convenient sufficient conditions for it are provided by \cite{BPS} and \cite{HP}. One can also ask the analogous question for three random walks. In this direction, \cite{CDA} demonstrated a phase transition for certain comb-like graphs from a regime where infinite triple collisions occur to a regime where they do not. (Triple collisions on certain graphs were also discussed in the earlier works of \cite{BPS, Chenchen}.) In particular, it was shown in \cite{CDA} that the region in which infinite triple collisions occur is quite narrow, consisting only of combs whose volume growth is at a rate of an order no greater than $n\log n$. From the result of \cite{CDA}, one is led to consider what other graphs might exhibit the infinite triple collision property, and this is the basis of this article. Indeed, we study the trace of the four-dimensional simple random walk, which is a natural random graph, whose volume growth with respect to the resistance metric is of order $n(\log n)^{1/2}$ \cite{CS}. We show that, for this random graph, the infinite triple collision property occurs. The difficulty in obtaining this conclusion lies in the fact that the model sits at the critical dimension for random walk, which means that we have only very weak control on the existence of atypical events, and so we are required to develop a novel approach to understanding the behaviour of the associated random walk. We believe that some of the ideas will be useful for studying aspects of random walks on other models at their critical dimensions. As a simpler problem, we also apply a recent result of \cite{Noda} to obtain a scaling limit for double collisions. (See \cite{CroyRW, exactcorr, CS, DS', exact, DStams} for other works concerning the behaviour of the random walk on the random walk trace.)

In order to state our main results, let us now introduce precisely the main objects of discussion. 
We write $\mathbb{Z}^d$ for the $d$-dimensional  integer lattice; we will only consider $d=4$ in this article. Let $S=(S_k)_{k\geq 0}$ be a simple random walk on $\mathbb{Z}^{4}$, which we will assume satisfies $S_0=0$ (unless otherwise stated); the associated probability measure and expectation will be denoted by $P$ and $E$, respectively. For a given realisation of $S$, we set $\mathcal{G}$ to be the locally-finite infinite graph whose vertex and edge sets are given by
\[V(\mathcal{G}):=\left\{S_k:\:k\geq 0\right\};\qquad E(\mathcal{G}):=\left\{\{S_k,S_{k+1}\}:\:k\geq 0\right\},\]
respectively. Moreover, we suppose $X^i=(X^i_n)_{n\geq 0}$, $i=1,2,3$, are three independent copies of the discrete-time simple random walk on $\mathcal{G}$. We will write $P^\mathcal{G}_{x_1,x_2,x_3}$ for the law of the triple $(X^1,X^2,X^3)$, when started from $(x_1,x_2,x_3)\in V(\mathcal{G})^3$. We say that a triple collision occurs at time $n$ if 
\[X_n^1=X_n^2=X_n^3,\]
and that the infinite triple collision property holds for $\mathcal{G}$ if the number of such times is infinite, $P^\mathcal{G}_{0,0,0}$-almost-surely. The principal goal of this research is to establish the following.

\begin{theorem}\label{mr1}
$P$-almost-surely, the infinite triple collision property holds for $\mathcal{G}$.
\end{theorem}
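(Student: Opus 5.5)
We plan a second-moment argument on a sequence of well-separated scales, combined with a zero-one law. First, the event that infinitely many triple collisions occur is a tail event for the triple walk $(X^1,X^2,X^3)$. The triple is a Markov chain on the connected graph $V(\mathcal{G})^3$, and it is irreducible on the relevant component once we account for periodicity. Since $\mathcal{G}$ is bipartite, we work with the triple at even times, or with lazy walks if we need to compare. Hence this probability is either $0$ or $1$ for every starting point. By a Lévy $0$-$1$ argument it therefore suffices to show
\[
\liminf_{N\to\infty} P^{\mathcal{G}}_{0,0,0}\big(\text{a triple collision occurs in } [T_N,T_{N+1}]\big) \;\ge\; c>0,
\]
uniformly over the conditioning on the past, i.e.\ from an arbitrary starting configuration of the three walks in a region of size comparable to the current scale. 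The scales $T_N$ grow rapidly, e.g.\ $T_N=2^{2^N}$ or $e^{N^2}$.

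The key input is quenched heat kernel control on $\mathcal{G}$ at typical scales. From \cite{CS} and the resistance-volume estimates, the resistance from $0$ to distance $R$ is of order $R$. The volume of resistance balls of radius $R$ is of order $R(\log R)^{1/2}$, and the walk is essentially one-dimensional up to logarithmic corrections. We expect on-diagonal heat kernel bounds of the form $p_{n}(x,x)\asymp n^{-1/2}(\log n)^{-1/4}$ for "good" $x$ and $n$.

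Let $Z_N$ be the number of triple collisions at times $n\in[T_N,T_{N+1}]$ at vertices $x$ in a good set. The first moment is
\[
E Z_N=\sum_n\sum_x p_n(x_1,x)p_n(x_2,x)p_n(x_3,x),
\]
which is of order
\[
\sum_n n^{1/2}(\log n)^{1/2}\cdot n^{-3/2}(\log n)^{-3/4}=\sum_n n^{-1}(\log n)^{-1/4}.
\]
Over $[T_N,T_{N+1}]$ this sum diverges in $N$, giving $\sum_N EZ_N=\infty$ but only barely. The second moment is $E Z_N^2\lesssim (EZ_N)^2$ by the Markov property and the same kernel bounds, which is the standard Kochen–Stone computation as in \cite{CDA}. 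It then gives a positive probability of a collision per block, or at least along a positive-density set of blocks, and the $0$-$1$ law finishes.

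The main obstacle is that at the critical dimension the quenched estimates hold only with logarithmic errors, and they fail on an atypical but non-negligible set of scales and locations. Because the collision sum diverges only like $\sum (\log n)^{-1/4}/n$, multiplicative error factors of size $(\log n)^{\epsilon}$ that appear in generic bounds would destroy the argument.

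I would therefore avoid global heat kernel bounds. Instead I would derive sharp estimates, up to constants, for resistance and volume on a random but $P$-almost-surely infinite set of good scales $N$, using the cut-time structure of the four-dimensional walk and the law of large numbers for cut points and loop-erasure. Along those scales I would bound the probability that each walk stays in, and spreads evenly over, a segment of the trace of length $R_N$. Everything would then be reduced to a one-dimensional-like comparison along the cut points. At those points the graph is a chain of pieces with controlled resistance, and explicit Green's function estimates are available.

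The main work would be to show that good scales occur with enough frequency, e.g.\ positive lower density along a lacunary sequence, almost surely. This is where a Borel–Cantelli-type argument over the independent increments of $S$ between scales would be needed.
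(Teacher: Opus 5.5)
You have the right overall architecture: sparse scales, collisions counted on a good set, a second-moment bound, and conditional Borel--Cantelli over the increments of $S$. But the step you defer as ``the main work'' is exactly where the difficulty lies, and the tools you suggest for it do not address the real obstruction.

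At the critical dimension, single-scale failure probabilities decay only like powers of $\log N$. So the available estimates never give scales at which volume and resistance are controlled uniformly over the region the walks explore. What one actually has is the following.
\begin{itemize}
\item Lower bounds on $p^{B_{CN}}_m(y,x)$ for $x,y$ near the origin. These hold on an event $\tilde{\mathcal{E}}_N$ of probability $1-\delta$, and only for $m\in[N^{2-\varepsilon},cN^2\Psi(N)]$.
\item On-diagonal upper bounds $p_n(x,x)\le C\sqrt{\Psi(n)/n}$, but only at points of $A_N$, where volume lower bounds hold at all scales between $e^{(\log N)^{1/2}}$ and $N$. This small-scale volume control around collision points is what your bound $EZ_N^2\lesssim (EZ_N)^2$ needs. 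Cut-point or loop-erasure laws of large numbers control resistance from the origin and say nothing about it.
\item The bound $|A_N\cap B_m|\ge pm$ only with probability at least $p$, separately for each $m$, never simultaneously for all $m$ in the window.
\end{itemize}
So a ``good scale'' on which both moments are controlled cannot be assembled from these as an intersection of high-probability events. Your proposal gives no mechanism showing that such scales occur with conditional probability bounded below.

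The missing idea is the paper's double use of Paley--Zygmund.
\begin{itemize}
\item Collisions are counted only if they land in $A_{N_i}$. Then the Markov property at a collision time restarts the walks where the upper bound holds, giving $\mathbb{E}((H_i')^2\,|\,\mathcal{H}_{i-1},S)\le E_i(1+C(\log N_i)^{1/2})$ with $E_i=\mathbb{E}(H_i'\,|\,\mathcal{H}_{i-1},S)$.
\item On $\tilde{\mathcal{B}}_i$ there is the deterministic bound $E_i\le C(\log N_i)^{1/2}$.
\item Sum the per-$n$ bounds $P(|A_{N_i}\cap B_{m(n)}|\ge p'm(n),\tilde{\mathcal{B}}_i\,|\,\mathcal{F}_{i-1})\ge p'/2$, with $m(n)\asymp\sqrt{n/\Psi(n)}$, against the weights $\Psi(n)/n$. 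This gives $\mathbb{E}(E_i\mathbf{1}_{\tilde{\mathcal{B}}_i}\,|\,\mathcal{H}_{i-1})\ge c(\log N_i)^{1/2}$, and it only needs the infimum over $m$ outside the probability.
\item Paley--Zygmund applied to $E_i$ then gives $E_i\ge c(\log N_i)^{1/2}$ with conditional probability at least $c'$. A second, quenched application to $H_i'$ finishes.
\end{itemize}

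Decoupling across scales also needs more than independent increments. The graph near $0$ depends on $S[0,N_{i-1}]$ and on the far future of $S$. The paper handles this with $\mathcal{F}_{i-1}$-conditioning, the event $\mathcal{B}_i$, and exit times $\Theta_{\eta N_i}$ in place of fixed times.

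Two further errors need fixing.
\begin{itemize}
\item Your zero-one claim is false as stated. Since $p_n(0,0)\le Cn^{-1/2}$, the triple chain is transient, so irreducibility does not trivialise its invariant events. You do not need it: L\'evy's conditional Borel--Cantelli, as in the paper, suffices.
\item At time $n$ the walk spreads over $\asymp n^{1/2}(\log n)^{1/4}$ vertices, not $n^{1/2}(\log n)^{1/2}$. So the first-moment summand is $\Psi(n)/n\asymp n^{-1}(\log n)^{-1/2}$, and block sums are of order $(\log N)^{1/2}$.
\end{itemize}
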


\begin{remark}
    From our argument, it is possible to check that the number of collisions up to time $n$ is infinitely often greater than $c(\log n)^{1/2}$, almost-surely (cf.\ Proposition \ref{hbound} below). Similarly to the comments of \cite[Remark 1.4]{CDA} it would be of interest to seek more detailed information on the collision rate, such as determining an upper bound on the number of collisions to time $n$ that holds for all time. (We conjecture that, up to smaller order deviations, our lower bound is sharp.)
\end{remark}

To prove Theorem \ref{mr1}, we essentially follow the strategy of \cite{CDA}, which involves a careful analysis of the heat kernel (transition density) for the random walks $X^i$, $i=1,2,3$. The main additional issue in this article, as compared to \cite{CDA}, is that our graph is random. Indeed, it is possible to check that the deterministic comb graphs of \cite{CDA} satisfy suitable volume and resistance bounds down to very small scales, which allows the heat kernel to be well-estimated down to correspondingly small scales uniformly over appropriate spatial regions. Of course, being a random graph, $\mathcal{G}$ will sometimes exhibit `bad' (atypical) sections, and we need to control these. Since we are working in the critical dimension, the probabilistic order of bad events occurring will typically decay logarithmically, which makes taking union bounds impractical, and obtaining uniform heat kernel control seems difficult (and possibly it does not hold). We avoid this issue by requiring instead heat kernel estimates on a smaller `good' set. This relaxes the demands on the probabilistic estimates we need enough to be able to complete the argument, though does require a novel double application of the Paley-Zygmund inequality (see the comments that appear above the statement of Lemma \ref{bilem}). As noted above, we anticipate that a similar approach based on restricting heat kernel estimates to good sets will be useful in the study of other random walks on random graphs at their critical dimension.

As is now a standard technique for establishing such heat kernel estimates, we define the good set to satisfy some volume and effective resistance bounds and require that $\mathcal{G}$ contains the good set with high probability. We require these bounds to hold uniformly down to a small scale, which we do by improving the estimates provided in \cite{exact} (see also \cite{exactcorr}). In particular, we achieved our aim by considering the event where $\mathcal{G}$ has at most one long-range self-intersection, obtaining a smaller error probability on effective resistance than the one obtained in \cite{exact}, so that we can take a union bound to prove that the bound holds simultaneously at a suitably wide range of scales. 

As our next main result, we explain how the conclusion of \cite{Noda} can be applied in our setting. Since \cite{Noda} is written for continuous-time random walks and it is not straightforward to modify the result to yield the corresponding statement for discrete-time random walks, we now consider continuous-time random walks. Let $\tilde{X}^i=(\tilde{X}^i_t)_{t\geq 0}$, $i=1,2$, be two independent copies of the so-called variable-speed random walk on $\mathcal{G}$ started from 0, and write $\tilde{P}^\mathcal{G}_{0,0}$ for their joint law. In particular, each $\tilde{X}^i$ is the continuous-time Markov chain which has a jump rate of 1 along each edge of the graph. Namely, the jump chain of $\tilde{X}^i$ is equal in distribution to $X^i$, and the holding times are exponential, with parameter given by the current vertex degree. Write $C=(C_t)_{t\geq 0}$ for the collision time process  
\begin{equation}\label{ctdef}
C_t:=\int_0^t\mathbf{1}_{\{\tilde{X}^1_t=\tilde{X}^2_t\}}dt.
\end{equation}
We then have the following; a more detailed description of the limiting process can be found in Section \ref{2coll}. Note that, as per (a minor modification of) the main result of \cite{CS}, the reflected Brownian motions that appear in the conclusion of Theorem \ref{mr2} capture the scaling limit of the pair $(\tilde{X}^1,\tilde{X}^2)$. The space $D([0,\infty),\mathbb{R})$ of the statement is the collection of cadlag functions from $[0,\infty)$ to $\mathbb{R}$, equipped with the Skorohod $J_1$ topology.

\begin{theorem}\label{mr2}
There exists a constant $c\in (0,\infty)$ such that, under the annealed law, 
\[\int\tilde{P}^\mathcal{G}_{0,0}\left(\cdot\right)P(d\mathcal{G}),\]
the process
\[\left(n^{-1/2}\log(n)^{1/4}C_{nt}\right)_{t\geq 0}\] converges in distribution as $n\rightarrow \infty$ in the space $D([0,\infty),\mathbb{R})$ to a non-trivial continuous-time stochastic process $C^{B}=(C^{B}_t)_{t\geq 0}$, which represents the collision time process of two independent one-dimensional Brownian motions on $[0,\infty)$, started at and reflected from 0.
\end{theorem}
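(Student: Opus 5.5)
We plan to deduce Theorem \ref{mr2} from the general result of \cite{Noda}. As we read it, that result says: if a sequence of graphs with measures, embedded in a common metric space, converges in a Gromov--Hausdorff-vague sense to a limiting resistance metric measure space, and suitable heat kernel and resistance regularity holds uniformly, then the collision time processes of two independent variable-speed random walks converge. The limit is the collision measure process of the limiting diffusions, namely the positive continuous additive functional whose Revuz measure is the diagonal measure $\mu(dx)$. We therefore take the input to be the scaling limit of $\mathcal{G}$ from \cite{CS}. With $a_n:=n^{1/2}(\log n)^{-1/4}$, the rescaled graph $(V(\mathcal{G}), a_n^{-1}R_{\mathcal{G}}, n^{-1}\mu_{\mathcal{G}}, 0)$ converges in distribution to the half-line $([0,\infty),|\cdot|,c\,\mathrm{Leb},0)$. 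Here $\mu_{\mathcal{G}}$ is the counting measure and $R_{\mathcal{G}}$ is the effective resistance; recall that resistance grows like $k(\log k)^{-1/2}$ along the walk, so $n$ vertices correspond to resistance $n(\log n)^{-1/2}$. Time is then scaled by $a_n\cdot n$, and the factor $n^{-1/2}\log(n)^{1/4}$ in front of $C_{nt}$ is exactly the density normalisation of the occupation measure relative to $n^{-1}\mu_{\mathcal{G}}$, after reparametrising time.

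The key steps, in order, are as follows.

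(i) Use Skorohod's representation to place realisations of $\mathcal{G}$ on a common probability space on which the rescaled spaces converge almost surely, in the spatial Gromov--Hausdorff-vague topology required by \cite{Noda}. Since the limit is deterministic, this reduces to convergence in probability of the rescaled resistance metric and measure. That convergence follows from the law of large numbers for the resistance along the path, $R_{\mathcal{G}}(0,S_k)\sim c\,k(\log k)^{-1/2}$, together with the concentration results of \cite{exact, exactcorr} used in \cite{CS}.

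(ii) Check Noda's extra hypotheses. Beyond convergence, these amount to a uniform-volume or metric-entropy type condition guaranteeing that the collision measure (the product of the measures restricted to the diagonal) is controlled. For one-dimensional-like resistance spaces, this amounts to a lower bound of the form $\mu_{\mathcal{G}}(B_R(x,r))\geq c\,r\,a_n^{-1}n$ uniformly for $x$ in compacts and $r\geq\delta$, holding with high probability. On the half-line the diagonal is hit by the product process, since points have positive capacity, so collisions are non-trivial and $C^B$ is non-degenerate.

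(iii) Apply \cite{Noda} quenched along the coupled sequence to get convergence of $(\tilde{X}^1,\tilde{X}^2,C)$. Then integrate over $P$ by dominated convergence, which yields the annealed statement, and identify the limit with the collision time process $C^B$ of two reflected Brownian motions, with the constant $c$ absorbing the measure constant.

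The main obstacle will be step (ii). The resistance volume bounds must hold uniformly over all points of compact regions, not just at typical points, and in dimension four bad events decay only logarithmically. We would first use the cruder, scale-$\delta$ uniform estimates from \cite{CS}, which suffice because Noda's conditions concern macroscopic scales. If needed, we would use the one-long-range-intersection estimate announced in the introduction and take a union bound over $O(\delta^{-1})$ macroscopic pieces.
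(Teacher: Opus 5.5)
Your overall plan is the paper's: feed the space convergence of \cite{CS} into \cite[Theorem 9.13]{Noda} and reparametrise. However, step (ii) misidentifies the extra hypothesis, and the way you propose to verify it would not work. The remaining condition, \cite[Assumption 9.12(iv)]{Noda}, is a volume lower bound on scales \emph{below} the macroscopic one, uniformly over a compact region. What the paper proves (Proposition \ref{lem-noda's condition}) is that, with probability at least $1-Cr(\log n)^{-1/2+\varepsilon}$,
\[\inf_{x\in B_R(0,rn\Psi(n))}V_R(x,\eta n\Psi(n))\geq c\eta n\quad\text{simultaneously for all }\eta\in\left((\log n)^{-1-\varepsilon},1\right].\]
Fixed-$\delta$ bounds cannot replace this. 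Macroscopic information (with \cite[Proposition 3.2]{CS}) gives at most convergence of the walks themselves, and $C_t$ is not a continuous functional of the paths. To stop the rescaled collision time from concentrating, you need on-diagonal heat kernel control at times much shorter than $n^2\Psi(n)$, from every starting point in the compact region. That control comes from volume lower bounds at the corresponding small resistance scales. Consider a mesoscopic stretch of the trace with few loops, where $V_R(x,\eta n\Psi(n))$ is only of order $\eta n\Psi(n)$ rather than $\eta n$: it would carry excess collision time, yet it is invisible at any fixed scale $\delta$. So the assertion that ``Noda's conditions concern macroscopic scales'' is exactly where your argument breaks.

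Proving the correct bound needs a genuinely multiscale argument that your $O(\delta^{-1})$-piece union bound does not provide. Single-block estimates (from \cite[(2.17)]{exact} and the proof of \cite[Proposition 2.1.2]{DS'}) fail with probability only $O((\log n)^{-3/2})$, so one can afford about $(\log n)^{1+\varepsilon}$ blocks. For each dyadic level $m\leq M\approx(1+\varepsilon)\log_2\log n$, split $[0,2rn]$ into $2^{m+1}r$ time intervals of length $2^{-m}n$. Require each interval to have resistance diameter at most $2^{-m+1}n\Psi(n)$ and range at least $c2^{-m}n$. The union bound then costs $\sum_{m\leq M}2^{m+1}r(\log n)^{-3/2}\leq Cr(\log n)^{-1/2+\varepsilon}$. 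This is precisely why the scales stop at $(\log n)^{-1-\varepsilon}$ and cannot be pushed to polynomially small ones. One then passes from time-indexed pieces to resistance balls via the inclusion $B_R(0,rn\Psi(n))\subseteq B_{2rn}$. This inclusion holds on the event that there is a cut time in $[2rn-n(\log n)^{-6},2rn]$ and $R(0,S(2rn))\geq\frac32 rn\Psi(n)$.

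Two smaller points. First, your normalisations are inconsistent. If resistance is scaled by $a_n=n^{1/2}(\log n)^{-1/4}$, the counting measure must be scaled by $n^{1/2}(\log n)^{1/4}$, not $n$. Then time is scaled by the product, $n$, and collision time by $a_n$. The paper uses resistance, measure and time scales $n\Psi(n)$, $c_Vn$, $c_Vn^2\Psi(n)$, and reparametrises at the end via $\Psi(n)\sim c_{\mathrm{res}}(\log n)^{-1/2}$. Second, a quenched application after a Skorohod coupling would need the volume hypothesis almost surely along the coupled sequence. But it only holds with probability $1-O(r(\log n)^{-1/2+\varepsilon})$, which is not summable, so you would need an extra subsequence argument. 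The paper instead applies \cite[Theorem 9.13]{Noda} directly, with the hypotheses verified in probability. For Assumption 9.12(i,ii) it uses Proposition \ref{xnconv}, which itself adapts \cite[Proposition 3.1]{CS} from the degree measure to the counting measure and from compact balls to the whole space. For Assumption 9.12(iii) it uses \cite[Proposition 3.2]{CS}.
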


\begin{remark}
(a) In fact, the result of \cite{Noda} would also allow us to retain information about the spatial positions of the collisions in a `space-time collision measure' (cf.\ \cite{Nguyen}), but we give a simpler statement for brevity.\\
(b) Whilst we do not pursue it here, a similar (but easier version of the) argument would yield a quenched version of Theorem \ref{mr2} in dimensions $d\geq 5$. In particular, in these dimensions, for each fixed realization of the graph $\mathcal{G}$, then the same scaling limit would apply without the appearance of the $\log$ term. (The scaling limit of the random walk itself was originally proved in \cite{CroyRW}.) Furthermore, by arguing as for Theorem \ref{mr1}, it should also be possible to check that the infinite triple collision property holds in these dimensions.\\
(c) It is also interesting to consider the situation for three dimensions. In this case, we have from \cite[Theorem 1.2.3]{DS'} that, for $P$-a.e.\ realisation of $S$ and every $\varepsilon>0$, there exists a constant $C$ such that:
\[p_n(0,0)\leq C n^{-(10/19-\varepsilon)},\qquad \forall n\geq 1,\]
where $(p_n(x,y))_{x,y\in V(\mathcal{G}),n\geq 0}$ is the heat kernel of the discrete-time simple random walk on $\mathcal{G}$, (cf.\ \eqref{hkdef} below). Since it also holds for any infinite graph that $p_n(x,x)\leq Cn^{-1/2}$ for some universal constant, we thus obtain from Cauchy-Schwarz that
\[\sup_{x\in V(\mathcal{G})}p_{2n}(0,x)\leq \sup_{x\in V(\mathcal{G})}\sqrt {p_{2n}(0,0)p_{2n}(x,x)}\leq C\sqrt{n^{-(10/19-\varepsilon)}n^{-1/2}}=Cn^{-(39/76-\varepsilon/2)}\]
for $P$-a.e.\ realisation of $\mathcal{G}$. Arguing as in \cite[Lemma 2.1]{CDA}, it follows that the expected number of collisions is bounded by
\[\sum_{n=0}^\infty P^\mathcal{G}_{0,0,0}\left(X^1_n=X^2_n=X^3_n\right)\leq 64\sum_{n=0}^\infty\sup_{x\in V(\mathcal{G})}p_{2n}(0,x)^2<\infty.\]
In particular, in the three-dimensional case, triple collisions occur finitely often, $P^\mathcal{G}_{0,0,0}$-a.s.\ for $P$-a.e.\ realisation of $S$. A natural further question is whether the infinite double collision property holds in three dimensions. We conjecture that it does, but we are currently unable to provide a proof, for which it seems a more detailed analysis of the three-dimensional random walk path is required. \\
(d) As per \cite[Remark 2.3]{CDA}, four or more walks collide simultaneously only finitely often on any bounded degree graph. Thus, the problem described in part (c) is the only one concerning the infinite collision property of simple random walk traces on the integer lattice that remains.
\end{remark}

The remainder of the article is organised as follows. In Section \ref{volsec}, we derive various estimates on the volume growth and resistance for the graph $\mathcal{G}$, which are then applied in Section \ref{sec:HKbounds} to yield corresponding heat kernel bounds. Next, in Section \ref{3coll}, we derive our first main result, Theorem \ref{mr1}. Finally, we conclude the article with Section \ref{2coll}, which contains the proof of Theorem \ref{mr2}. 

As for notational conventions, we sometimes use a continuous variable $x$ in a place where a discrete one is required; in such cases, $x$ should be understood to mean $\lfloor x \rfloor$. Throughout the paper, $c$, $C$, $c_{1}$,\dots denote constants whose values may change from line to line, unless specifically noted.

\section{Volume and resistance bounds}\label{volsec}

In this section, we will prepare some volume and effective resistance bounds of $\mathcal{G}$ that are needed to obtain heat kernel estimates in the next section. We begin by setting out some preliminaries in Subsection \ref{volsec-pre}. In Subsections \ref{volsec-1} and \ref{volsec-2}, we obtain the estimates required for the heat kernel upper and lower bounds, respectively. In Subsection \ref{volsec-3} we extend the bounds to conditional probability with an eye toward a Borel-Cantelli argument that we will later apply.

\subsection{Notation and preliminary results}\label{volsec-pre}
We write $\mathbb{Z}^d$ and $\mathbb{R}^d$  for the $d$-dimensional  integer lattice and the $d$-dimensional Euclidean space, respectively. For $x \in \mathbb{R}^{d}$, $|x|$ denotes the Euclidean distance between $x$ and the origin. We will consider only the $d = 4$ case in this article. 

Let $\{ a_{n} \}_{n \ge 1}$ be a sequence with  $a_{n} > 0$ for all $n \ge 1$.
For a sequence $\{ b_{n} \}_{n \ge 1}$, we write $b_{n} = o (a_{n})$ if $\lim_{n \to \infty} \frac{b_{n}}{a_{n}} = 0$. We also write $b_{n} = O (a_{n})$ if there exists a constant $C > 0$ such that $|b_{n}| \le C a_{n}$ for all $n \ge 1$. When we want to make it explicit that the rate of convergence to $0$ or the constant $C>0$ depends on a certain parameter $\alpha$, we write $o_{\alpha}(a_n)$ or $O_{\alpha}(a_n)$. As usual, we use to notation $b_n\sim a_n$ to mean that $\lim_{n\to\infty}\frac{b_n}{a_n}=1$. 

We denote by ${\cal G}_{m, n}$ the (random) graph whose vertex set is given by the sites visited by the walker $S$ between times $m$ and $n$, and whose edge set corresponds to pair of consecutive visited sites. Formally, the vertex and edge sets of ${\cal G}_{m, n}$ are
\[\left\{ S_j \mid m \le j \le n \right\},\qquad\left\{ \{ S_j, S_{j+1}\} \mid m \le j \le n-1\right\},\]
respectively. We also set $\mathcal{G}_n:=\mathcal{G}_{0,n}$.

For a graph $G = (V, E)$, let $R_{G} (\cdot, \cdot )$ denote the effective resistance  on $G$. That is, $R_{G}(\cdot,\cdot)$ is defined as follows.

\begin{itemize}
\item For $f, g \in \mathbb{R}^{V}$, write
$${\cal E} (f, g) = \frac{1}{2} \sum_{x, y \in V, \, \{ x, y \} \in E} (f(x) - f(y)) (g(x) - g(y)) $$
for a quadratic form defined on the set of functions $h$ for which $\mathcal{E}(h,h)<\infty$.

\item For  disjoint subsets $A$ and $B$  of $V$, define 
$$ R_{G} (A, B) = \left( \inf \left\{ {\cal E} (f, f) \ : \ f \in \mathbb{R}^{V}, \, {\cal E} (f, f) < \infty, \, f|_{A} = 1, \, f|_{B} = 0 \right\} \right)^{-1}. $$ We write $R_{G} (x, y) = R_{G} (\{ x \}, \{ y \})$ for $x, y \in V$.
\end{itemize}
Also, we let $d_{G} (\cdot, \cdot )$ denote the  graph distance on $G$. 

We say $0 \le k \le n$ is a cut time up to $n$ if $S[0,k] \cap S[k+1, n ] = \emptyset.$ Here, for $0 \le a \le b < \infty$, we denote by $S[a,b]$ the set $\{ S_k : a \le k \le b \}.$ In this article, we may also make a slight abuse of notation to denote by $S[a,b]$ the subgraph of $\mathcal{G}$ restricted to the above vertex set according to the context. We denote by 
\[    B_n\coloneqq S[0,n],\]
the ball around the origin defined by the number of steps. 
For $x\in S[0,\infty)$, we denote by  
\[    B_R(x,r)\coloneqq \{y\in S[0,\infty)\mathrel{:}R_\mathcal{G}(x,y)\le r\}\]
the resistance ball, and define 
\[    V_R(x,r)=|B_R(x,r)|.\]
We also write 
\[    B(x,r)=\{y\in\mathbb{Z}^4\mathrel{:}|y-x|\le r\}\]
for the Euclidean ball.

We end this subsection by introducing a preliminary result on the effective resistance on $\mathcal{G}$. 
Let 
\[    \Psi(n)\coloneqq \frac{E(R_\mathcal{G}(0,S(n)))}{n}.\]
Then \cite[Theorem 1.1]{DS-SW-4d} shows that there exists a constant $c_{res} \in (0, \infty)$ such that
\begin{equation}\label{cite-dssw-mainthm}
    \Psi(n)\sim c_{\mathrm{res}}(\log n)^{-\frac{1}{2}}.
\end{equation}

\subsection{Volume lower bound}\label{volsec-1}

Now, we define a ``good" set $A_N$ by setting
\begin{equation}\label{andef}
A_N\coloneqq \left\{x\in B_N\mathrel{:} V_R^{(N)}(x,rN\Psi(rN))\ge crN\mbox{ for all }r\in\left(e^{(\log N)^{1/2}}/N,1\right]\right\},
\end{equation}
where $V_R^{(N)}(x,r)=|\{y\mathrel{:} R_{\mathcal{G}_N}(x,y)\le r\}|$ and $c\in(0,1)$ is a constant that will be fixed later. We highlight that $A_N$ only depends on the graph $\mathcal{G}_N$. It also holds that $V_R(x,r)\geq V_R^{(N)}(x,r)$ for any $x,r,N$, and so the volume bounds that hold on $A_N$ also hold for the corresponding balls on the full graph $\mathcal{G}$.

The main proposition of this subsection is the following. 
\begin{proposition}\label{prop-vol-low}
There exists a universal constant $p\in(0,\frac{1}{4})$ such that,  for every fixed $\varepsilon \in (0,1)$,  
\[    \inf_{m\in[N^{1-\varepsilon},N]}P(|A_N\cap B_m|\ge pm)\ge p\]
holds for sufficiently large $N$.
\end{proposition}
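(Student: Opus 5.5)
The plan is to show that a fixed point $x=S_k$, with $k\le m$, lies in $A_N$ with probability bounded below uniformly. We then pass from this pointwise bound to a bound on $|A_N\cap B_m|$ with positive probability by a first-moment/Paley--Zygmund argument. We have $|A_N\cap B_m|\le |B_m|\le m+1$ and
\[E|A_N\cap B_m|\ \ge\ E\sum_{k\le m}\mathbf{1}_{\{S_k\in A_N,\ S_k\notin S[0,k-1]\}}.\]
So if $P(S_k\in A_N)\ge q$ for a constant $q$ and most $k\le m$, then $E|A_N\cap B_m|\ge c q m$. The elementary inequality $P(Z\ge \tfrac12 EZ)\ge \tfrac{EZ}{2(m+1)}$ for $0\le Z\le m+1$ then gives the claim with $p$ of order $q^2$. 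Note that the number of distinct points among $S_0,\dots,S_k$ is of order $k$ with high probability in $d=4$, so the count of distinct points is not an issue.

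The pointwise estimate: fix $k\in[m/4,3m/4]$ (also $\le N/2$, so the point sits away from both ends of $\mathcal{G}_N$). For each $r$ in the range, the ball $B_R^{(N)}(S_k,rN\Psi(rN))$ contains $S[k,k+rN]$ as soon as
\[R_{\mathcal{G}_N}(S_k,S_j)\le R_{S[k,k+rN]}(S_k,S_j)\le rN\Psi(rN)\qquad\text{for all } j\in[k,k+rN].\]
Using $|S[k,k+rN]|\ge c\,rN$ (a volume lower bound, which fails with probability decaying polynomially in $rN$), it therefore suffices to control
\[\max_{j\le rN}R_{S[k,k+rN]}(S_k,S_j)\]
by its mean $\asymp rN(\log rN)^{-1/2}$, via \eqref{cite-dssw-mainthm}. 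Here the constant $c$ in \eqref{andef} is absorbed by choosing it small; if needed we instead use $R\le$ the resistance with a factor-$\tfrac12$ window, together with monotonicity of $\Psi$ up to constants. By translation invariance of the increments, the bad event for fixed $r$ is the event $E_{rN}$ that the resistance of a walk segment of length $n=rN$ exceeds $C' n\Psi(n)$ somewhere along it.

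The main obstacle is the union over scales $r\in(e^{(\log N)^{1/2}}/N,1]$. We discretise to dyadic $n=2^i$ with $e^{(\log N)^{1/2}}\le n\le N$, which gives about $\log N$ scales. Markov's inequality only gives a constant bound on $P(E_n)$, which is useless here. We need $P(E_n)\le \delta_n$ with $\sum_i\delta_{2^i}$ small. For example, $\delta_n=(\log n)^{-2}$ would give
\[\sum_{i\ge(\log N)^{1/2}} i^{-2}\approx(\log N)^{-1/2}\to0.\]
To obtain such a tail, I would follow the strategy indicated in the introduction. The resistance along $S[0,n]$ is bounded by the sum over cut-time blocks, and it deviates from the mean only through long-range self-intersections. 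On the event of at most one long-range self-intersection, whose complement has probability $O((\log n)^{-2})$ or better by the four-dimensional intersection estimates, the resistance concentrates: a second-moment bound on the number of cut times at sub-scales gives an error like $(\log n)^{-1}$, and smaller via higher moments. Monotonicity between consecutive dyadic scales handles intermediate $r$ at the cost of constants. Finally, since $|k|$ ranges over $m\ge N^{1-\varepsilon}$, the scales $rN>m$ beyond the segment near $S_k$ remain within $\mathcal{G}_N$ because $k+N\cdot r$ may exceed $N$. This is handled by using the backward segment $S[k-rN,k]$ when $k$ is large, or the two-sided segment, which is why $k$ is taken in the middle range and the infimum over $m$ is uniform.
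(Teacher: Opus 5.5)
Your architecture is the paper's: a pointwise bound $P(S_k\notin A_N)\to0$ from a union bound over $O(\log N)$ dyadic scales (Lemma \ref{lem-vol-low-1}), followed by a first-moment/Paley--Zygmund step using $|A_N\cap B_m|\le m+1$. Counting first visits is a fine substitute for the paper's bounded-local-time set $\mathcal{S}(k;m)$. It works provided you use $P(S_k\notin A_N)\to0$ together with $P(S_k\notin S[0,k-1])\ge 1/G(0,0)$. The window issue in your last paragraph is moot: for $k\le N/2$ a forward window of length $rN/2$ lies in $[0,N]$.

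The gap is the single-scale tail bound, which is the entire content of the pointwise estimate. You rightly note that you need $P(E_n)=o((\log n)^{-1})$, since $\sum_i (i\log 2)^{-1}$ over $(\log N)^{1/2}\lesssim i\lesssim \log N$ is of order $\log\log N$. Your sketch does not deliver this.
\begin{itemize}
\item What is required is an \emph{upper} tail bound on the maximal resistance within a window, so that the resistance ball swallows the window. Long-range self-intersections only create shortcuts and lower resistance, so restricting to ``at most one long-range self-intersection'' is beside the point. That device is what the paper uses for the \emph{lower} bound \eqref{e2} in Lemma \ref{lem-eff-e2}. There an error of $(\log n)^{-1}$ is tolerable because only about $\varepsilon\log N$ scales are summed.
\item The second-moment bound you mention gives exactly the non-summable rate $(\log n)^{-1}$, and ``smaller via higher moments'' comes with no mechanism.
\end{itemize}

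The paper fills this step by quoting (2.17) of \cite{exact}: the maximal resistance along a segment of length $n$ exceeds $c_1n\Psi(n)$ with probability $O((\log n)^{-3/2})$. Combined with $P(|S[0,n]|<cn)=O(n^{-1})$ (Lemma \ref{lem-vol-Bn}) and a sum over scales, this yields $P(S_k\notin A_N)=O((\log N)^{-1/20})$.

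If you want a self-contained argument, the upper tail is actually the easy direction. Split a window of length $n$ into $L=(\log n)^2$ blocks of length $n/L$. By the triangle inequality and Rayleigh monotonicity, every resistance between points of the window is at most $2n/L+\sum_{j=1}^L R_j$, where $R_j$ is the end-to-end resistance of the $j$-th block computed in that block's own trace. The $R_j$ are i.i.d. and lie in $[0,n/L]$. They have mean $O(\frac nL\Psi(n))$, which follows by comparing with $R_{\mathcal{G}}$ through a cut time near the block's endpoint. Hoeffding's inequality then bounds the probability that $\sum_j R_j$ exceeds its mean by $n\Psi(n)$ by $\exp(-2L\Psi(n)^2)=n^{-c}$. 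Summed over the $O(\log N)$ dyadic scales $n\ge e^{(\log N)^{1/2}}$, this is $O(\log N\, e^{-c(\log N)^{1/2}})\to0$. The resulting constant is absorbed by taking windows of length $\theta rN$ with $\theta$ small, as you planned.
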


The main ingredient needed to establish the above proposition is the following lemma. 
\begin{lemma}\label{lem-vol-low-1}
For any $m=0,1,\dots,N$, 
\begin{equation}\label{vol-low-lem}
	P(S_m\in A_N)\ge 1-O\left((\log N)^{-1/20})\right).
\end{equation}
\end{lemma}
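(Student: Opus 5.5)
Write $x=S_m$, $\rho:=e^{(\log N)^{1/2}}/N$, and reduce the condition in \eqref{andef} to a geometric sequence of scales. Set $r_j=2^{-j}$ for $0\le j\le J$, where $J\asymp (\log N)$ is the first index with $r_J\le \rho$. We have $r\Psi(rN)$ increasing in $r$ up to constants, by \eqref{cite-dssw-mainthm}. Also $\Psi(r_{j+1}N)/\Psi(r_jN)\to1$ uniformly for $r_jN\ge e^{(\log N)^{1/2}}$. Hence it suffices to show that, with probability $1-O((\log N)^{-1/20})$, for every $j\le J$ we have $V_R^{(N)}(x,\tfrac12 r_jN\Psi(r_jN))\ge 2c\,r_jN$; adjacent scales then fill in all $r$ at the cost of changing $c$. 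The number of scales is of order $\log N$, whereas the natural per-scale failure probability decays only like a power of $\log$ of the scale. A naive union bound is therefore useless, and I would instead aim for a per-scale bound of the form $(\log (r_jN))^{-\kappa}$ with $\kappa$ larger than needed. The sum over scales $r_jN\ge e^{(\log N)^{1/2}}$ of $(\log(r_jN))^{-\kappa}$ has order $\sum_{k\ge (\log N)^{1/2}}k^{-\kappa}\asymp(\log N)^{(1-\kappa)/2}$. For this to be $O((\log N)^{-1/20})$ we need $\kappa\ge 11/10$, which is presumably where the exponent $1/20$ originates.

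For a fixed scale $n=r_jN$, I would bound the volume from below by the trace segment around $x$. Let $I=[m-n/4,m+n/4]\cap[0,N]$, which has length at least $n/4$ since $m\in[0,N]$. Every $y=S_k$ with $k\in I$ satisfies $R_{\mathcal{G}_N}(x,y)\le R_{S[I]}(x,y)$ by Rayleigh monotonicity. So it suffices to show that, with high probability,
\[
\max_{k\in I}R_{S[I]}(S_m,S_k)\le \tfrac12 n\Psi(n).
\]
The resistance is bounded by the resistance along the path, and the vertex count of $S[I]$ is of order $n$, because the four-dimensional range is linear. The key input is a concentration estimate: the resistance between $S_a$ and $S_b$ along the segment is at most a sum over cut points, and this is comparable to the number of cut times weighted by spacing. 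From \cite{DS-SW-4d} and the methods of \cite{exact}, the resistance $R(S_a,S_b)$ concentrates around $(b-a)\Psi(b-a)$. I would strengthen the error term so that $P(R_{S[0,n]}(0,S_n)>(1+\delta)n\Psi(n))\le C(\log n)^{-\kappa}$. The approach follows the outline in the introduction: restrict to the event that $S[0,n]$ has at most one long-range self-intersection, and use a second-moment or variance bound for the resistance between cut times.

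To get a maximum over $k\in I$ rather than a fixed endpoint, I would apply this bound on dyadic subintervals of $I$ and chain. Resistance is subadditive along the path, and each sub-block of length $n2^{-i}$ contributes a resistance of order $n2^{-i}\Psi(n2^{-i})$. These contributions sum geometrically, up to the slowly varying $\Psi$ factor, and only finitely many blocks at the top levels need control with high probability. At the deep levels I would use the trivial bound that resistance is at most graph distance, which is at most block length. This is fine once the block length is below $\delta n\Psi(n)$, namely at depth $i\approx\tfrac12\log_2\log n$. The number of blocks at intermediate depths is polylogarithmic, so the union bound still needs the per-block tail to beat $(\log n)^{1/2}$ times the required rate. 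Time reversal and the Markov property at time $m$ reduce both sides of $m$ to two independent walks from $0$, since $S[m-n/4,m]$ reversed is a simple random walk. The two-sided case only requires adding the two resistances.

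The main obstacle is the per-scale tail estimate with exponent $\kappa>1$, uniformly down to scale $e^{(\log N)^{1/2}}$. The bounds available from \cite{exact} give a weaker logarithmic decay. I would first try controlling the number of long-range intersections, where the probability of two or more has a faster logarithmic decay in four dimensions. On the event of at most one, the resistance is essentially additive over the cut-time structure, and a Chebyshev bound using the variance of the cut-point count, of relative order $(\log n)^{-1}$, should give the needed rate.
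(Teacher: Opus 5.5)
Your overall architecture matches the paper's. You reduce to dyadic scales $2^{-k}N$ down to $e^{(\log N)^{1/2}}$. You bound $V_R^{(N)}(S_m,\cdot)$ from below by the range of a time segment of length of order $2^{-k}N$ containing $m$, via Rayleigh monotonicity. You then union-bound over the $O(\log N)$ scales using a per-scale failure probability $(\log(2^{-k}N))^{-\kappa}$ with $\kappa>1$. Your window centred at $m$ is, if anything, cleaner than the paper's block $J_x$, whose index depends on the path.

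The genuine gap is the per-scale estimate. It carries all the content, you leave it open, and the route you sketch for it would not deliver $\kappa>1$:
\begin{itemize}
\item A Chebyshev bound with relative variance of order $(\log n)^{-1}$ gives a failure probability of order $(\log n)^{-1}$, i.e.\ $\kappa=1$. Then $\sum_{k}(\log(2^{-k}N))^{-1}\asymp\log\log N$ diverges.
\item Your chaining step makes matters worse. It union-bounds over roughly $(\log n)^{1/2}$ dyadic blocks, so even a per-block rate $(\log n)^{-3/2}$ would degrade to $(\log n)^{-1}$ per scale.
\item Each cut time yields a cut edge in series, so cut times control the resistance from below. Concentration of the cut-time count therefore does not by itself bound the upper tail of the resistance, which is the direction needed here.
\end{itemize}

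The missing ingredient is a known estimate, and the paper simply imports it. From (2.17) of \cite{exact}, together with the range bound of Lemma \ref{lem-vol-Bn} and translation invariance, one gets the following for a time segment $J$ of length $n$, after adjusting constants. The events $\{\max_{l,m\in J}R_{\mathcal{G}_{l,m}}(S_l,S_m)\le n\Psi(n)\}$ and $\{|S[J]|\ge cn\}$ hold simultaneously with probability $1-O((\log n)^{-3/2})$.

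This controls the maximum over all pairs in the segment at once, so no chaining is needed, and it already has $\kappa=3/2$. Your belief that \cite{exact} gives only weaker decay is what led you to attempt a new tail bound. The improvement via at most one long-range self-intersection mentioned in the introduction concerns the lower resistance bound \eqref{e2}, i.e.\ Lemma \ref{lem-eff-e2}, not this upper bound.

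With $\kappa=3/2$ your own sum closes the argument and in fact gives $O((\log N)^{-1/4})$. The exponent $1/20$ in the paper does not come from a threshold $\kappa=11/10$. It comes from a cruder splitting of the same sum, in which the last $(\log N)^{7/10}$ scales are each bounded by $((\log N)^{1/2})^{-3/2}$.
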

\begin{proof}
For $S_m\eqqcolon x\in B_N$, let 
\[
    J_x\coloneqq [(j_x-1)2^{-k}N,j_x2^{-k}N],
\]
where we set $j_x\in\{1,\cdots,N\}$ to be the smallest number such that 
\[
    x\in S[(j_x-1)2^{-k}N,j_x2^{-k}N].
\]
We will first show that
\begin{equation*}
    P\left(V_R^{(N)}(x,2^{-k}N\Psi(2^{-k}N))\ge c2^{-k}N\right)\ge 1-C(\log (2^{-k}N))^{-\frac{3}{2}},
\end{equation*}
for each $k=0,1,\cdots, w_N$, where $w_N$ is defined by 
\[    w_N\coloneqq \left\lceil\frac{\log N-(\log N)^{1/2}}{\log 2}\right\rceil,\]
and the constant $C>0$ on the right-hand side does not depend on $x$. 
Suppose that the following two events occur: 
\begin{gather*}
    \left\{\max_{l,m\in J_x}R_{\mathcal{G}_{l,m}}(S_l,S_m)\le 2^{-k}N\Psi(2^{-k}N)\right\},\\
    \{|S[(j_x-1)2^{-k}N,j_x2^{-k}N]|\ge c2^{-k}N\},
\end{gather*}
Then, by monotonicity of effective resistance, the first event implies 
\[
    \max_{x,y\in S[(j_x-1)2^{-k}N,j_x2^{-k}N]}R_{\mathcal{G}_N}(x,y)\le 2^{-k}N\Psi(2^{-k}N),
\]
and thus 
\[
    V_R^{(N)}(x,2^{-k}N\Psi(2^{-k}N))\ge |S[(j_x-1)2^{-k}N,j_x2^{-k}N]|\ge c2^{-k}N,
\]
holds. By (2.17) of \cite{exact} and by the proof of \cite[Proposition 2.1.2]{DS'}, there exists a uniform constant $c\in(0,1)$ such that 
\[
    P\left(V_R^{(N)}(x,2^{-k}N\Psi(2^{-k}N))\right)\ge 1-O\left((\log (2^{-k}N))^{-\frac{3}{2}}\right)-O\left((2^{-k}N)^{-1}\right),
\]
and the right-hand side is uniform in $x$ by translation invariance. 

Next, we will take the union bound to prove \eqref{vol-low-lem}. 
Note that on the event 
\[    \bigcap_{k=0}^{w_N}\left\{V_R^{(N)}(x,2^{-k}N\Psi(2^{-k}N))\ge c2^{-k}N\right\},\]
we have $\{x\in A_N\}$ by taking $c$ properly, which follows from monotonicity of the volume. 
Thus, we have 
\begin{align*}
    P(x\not\in A_N)&\le \sum_{k=0}^{w_N}C(\log N-k\log 2)^{-3/2}
    \\&\le C\left(\sum_{k=0}^{w_N-(\log N)^{7/10}}\left((\log N)^{1/2}+\log 2(\log N)^{7/10}\right)^{-3/2}\vphantom{\sum_{k=w_N-(\log N)^{7/10}}^{w_N}\left((\log N)^{1/2}\right)^{-3/2}}\right.
    \\&\qquad \qquad+\left.\vphantom{\sum_{k=0}^{w_N-(\log N)^{7/10}}\left((\log N)^{1/2}+\log 2(\log N)^{7/10}\right)^{-3/2}}\sum_{k=w_N-(\log N)^{7/10}}^{w_N}\left((\log N)^{1/2}\right)^{-3/2}\right)
    \\&\le C\left(\log N\cdot (\log N)^{-21/20}+(\log N)^{7/10}\cdot(\log N)^{-3/4}\right)
    \le C(\log N)^{-1/20},
\end{align*}
from which \eqref{vol-low-lem} follows. 
\end{proof}

Before diving into the proof of the main proposition, we recall a lower bound estimate of the volume of $B_n$, which we used in the previous proof. 
\begin{lemma}\label{lem-vol-Bn}
There exists a universal constant $r\in(0,1)$ such that 
\[    P(|B_n|\ge rn)\ge 1-O(n^{-1}).\]
\end{lemma}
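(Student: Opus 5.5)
We need $P(|S[0,n]| \ge rn) \ge 1 - O(n^{-1})$ for four-dimensional simple random walk. The plan is a second moment argument based on the variance of the range.

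\emph{Mean of the range.} Write $R_n := |S[0,n]| = \sum_{k=0}^{n} \mathbf{1}\{S_k \notin S[k+1,n]\}$. Since $d=4$ is transient, the probability of no return up to time $n-k$ is at least the escape probability $\gamma_4 > 0$. Hence $E(R_n) \ge \gamma_4 (n+1)$. The classical result of Dvoretzky--Erdős actually gives $E(R_n) = \gamma_4 n + O(\log n)$ in four dimensions.

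\emph{Variance of the range.} The key input is the classical estimate of Jain and Pruitt: in $d=4$,
\[
\mathrm{Var}(R_n) = O(n \log n).
\]
This is where the real work lies; it is the main obstacle if one wants a self-contained proof. To prove it, I would decompose $R_n$ as the sum of the ranges of two halves minus their intersection $|S[0,n/2] \cap S[n/2,n]|$. The intersection has mean of order $\log n$ and second moment of order $(\log n)^2$ in $d=4$, by the usual Green's function computation $\sum_{x} G(x)^2 \asymp \log n$. One then iterates over dyadic scales and controls the covariance terms.

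\emph{Chebyshev.} With this variance bound, Chebyshev gives
\[
P(R_n < rn) \le P\bigl(|R_n - E R_n| > (\gamma_4 - r) n\bigr) \le \frac{C n \log n}{(\gamma_4 - r)^2 n^2} = O\!\left(\frac{\log n}{n}\right)
\]
for $r = \gamma_4/2$. This is slightly weaker than $O(n^{-1})$. To remove the logarithm, I would bound the fourth central moment instead, i.e.\ use $E|R_n - E R_n|^4 = O((n \log n)^2)$, available from the same decomposition. That yields $O((\log n)^2 / n^2) = O(n^{-1})$.

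\emph{Alternative via concentration.} More robustly, one can avoid moment bounds beyond the variance by blocking. Split $[0,n]$ into $n^{1/2}$ blocks of length $n^{1/2}$. Then
\[
R_n \ge \sum_{\text{blocks}} R^{(i)} - \sum_{i<j} |S[\text{block } i] \cap S[\text{block } j]|.
\]
The block ranges $R^{(i)}$ are i.i.d.\ and bounded by $n^{1/2}$, with mean at least $\gamma_4 n^{1/2}$, so by Hoeffding their sum exceeds $\frac{3}{4}\gamma_4 n$ with probability $1 - e^{-c n^{1/2}}$.

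The total intersection count has mean $O\bigl(\sum_{i<j} E|\cdot|\bigr)$. For non-adjacent blocks this is small: it is bounded by the expected number of pairs of times at which the walk visits the same site, which is about $\log n \cdot n^{1/2}$ in total by the $4$-dimensional Green's function bound $\sum_{k} \sum_{l} p_{l-k}(0) \approx n$ summed appropriately. The cleaner route, though, is the direct identity
\[
n + 1 - R_n \le \#\{(k,l) : k < l \le n,\ S_k = S_l\} =: L_n,
\]
and we only need $L_n \le (1-r) n$. Since $E L_n \approx n \sum_{m \ge 1} p_m(0,0)$, which is of order $c n$ with $c$ not necessarily small, this crude identity fails. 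So I would fall back on the variance route above, citing Jain--Pruitt, with the fourth-moment (or $2k$-th moment) strengthening to achieve the stated $O(n^{-1})$ rate.
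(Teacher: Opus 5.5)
The paper gives no argument of its own here; it cites \cite[Proposition 2.1.2]{DS'}. A second-moment bound on the range followed by Chebyshev is the natural route, but as written your proposal does not reach the stated rate. With your variance input, Chebyshev gives only $O(\log n/n)$. The step meant to remove the logarithm, $E|R_n-ER_n|^4=O((n\log n)^2)$ ``available from the same decomposition'', is asserted rather than proved.

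The detour comes from a misquoted input. Growth $\mathrm{Var}(R_n)\asymp n\log n$ is the three-dimensional behaviour; in $d=4$ one has $\mathrm{Var}(R_n)=O(n)$. Chebyshev then gives $P(R_n<\gamma_4 n/2)\le Cn/(\gamma_4 n/2)^2=O(n^{-1})$ directly.

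Your own halving decomposition proves the $O(n)$ bound without any covariance bookkeeping:
\begin{itemize}
\item Write $\bar X=X-EX$ and $R_n=R^{(1)}+R^{(2)}-I$. Here $R^{(1)}=|S[0,n/2]|$ and $R^{(2)}=|S[n/2,n]|$ are independent copies of $R_{n/2}$, and $I=|S[0,n/2]\cap S[n/2,n]|$.
\item Minkowski gives $\|\bar R_n\|_2\le\sqrt2\,\|\bar R_{n/2}\|_2+\|I\|_2$.
\item In $d=4$, $\|I\|_2=O(\log n)$, from the second moment of the intersection of two independent walks started at a common point.
\item Hence $\|\bar R_m\|_2/\sqrt m$ increases by at most $C\log m/\sqrt m$ from scale $m/2$ to scale $m$. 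This is summable over dyadic $m$, so $\mathrm{Var}(R_n)\le Cn$.
\end{itemize}

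Your reason for discarding $n+1-R_n\le L_n$ is also mistaken. You need $EL_n\le cn$ with $c<1$, not $c$ small. In $d=4$, $EL_n\le (G(0,0)-1)(n+1)$ with $G(0,0)-1\approx 0.24$. The same halving argument handles the variance: write $L_n=L^{(1)}+L^{(2)}+J$, where $J$ counts cross-half pairs $k\le n/2<l$ with $S_k=S_l$, and $\|J\|_2=O(\log n)$. This gives $\mathrm{Var}(L_n)=O(n)$. Chebyshev then yields $P(R_n<rn)=O(n^{-1})$ for every $r<2-G(0,0)$.

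Either fix turns your sketch into a complete, elementary proof. The blocking/Hoeffding variant would not close on its own, since Markov's inequality on the cross-block intersections gives only $O(n^{-1/2}\log n)$.
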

\noindent
See, for example, \cite[Proposition 2.1.2]{DS'} for the proof.
\begin{proof}[Proof of Proposition \ref{prop-vol-low}]
Let $m\in[N^{1-\varepsilon},N]$. 
Note that if we define by
\[
    \mathcal{S}(k;m)=\{n\in[0,m]\mathrel{:}|\{n'\ge 0\mathrel{:}S_{n'}=S_n\}|\le k\},
\]
the subset of $[0,m]$ consisting of steps $n$ for which $S_n$ is visited $k$ or less times by $S$, then    
\[
    \sum_{n=0}^m\mathbf{1}\{S_n\in A_N,~n\in\mathcal{S}(k;m)\}\le k|\{S_n\in A_N\mathrel{:}n\in\mathcal{S}(k;m)\}|\le k|A_N\cap B_m|.
\]
Also note that by the Markov property, 
\[
    E[\{n'\ge n\mathrel{:}S_{n'}=S_n\}]=G(0,0)<\infty,
\]
where $G(x,y)$ denotes the Green's function of $S$. 
By Markov's inequality, we have that for any $n\in[0,m]$, 
\begin{equation*}
    P(n\not\in\mathcal{S}(k;m))\le \frac{G(0,0)}{k}.
\end{equation*}
Thus, we have 
\begin{align*}
    E(|A_N\cap B_m|)&\ge \frac{1}{k}E\left(\sum_{n=0}^m\mathbf{1}\{S_n\in A_N,~n\in\mathcal{S}(k;m)\}\right) \\
        &\ge \frac{m}{k}P\left(S_n\in A_N,~n\in\mathcal{S}(k;m)\right)   \\
        &\ge \frac{m}{k}\left(1-O\left((\log N)^{-1/20}\right)-\frac{G(0,0)}{k}\right).
\end{align*}
Moreover, trivial upper bounds yield
\begin{equation*}
    |B_m|\le m;\quad E(|A_N\cap B_m|^2)\le E(|B_m|^2)\le m^2.
\end{equation*}
Set $k=\lceil4G(0,0)\rceil$ and $p=(4k)^{-1}$. 
Applying the Paley–Zygmund inequality (see \cite[Lemma 4.1]{Kall}, for example), we have 
\begin{align*}
       P\left(|A_N\cap B_m|\ge pm\right)
    &\ge P\left(|A_N\cap B_m|\ge \frac{1}{2}E(|A_N\cap B_m|)\right)\\
        &\ge p^2
\end{align*}
for all sufficiently large $N$, which concludes the proof. 
\end{proof}

\subsection{Effective resistance bound}\label{volsec-2}

In this subsection, we prove that a certain good event, upon which we will later show we have a desired heat kernel lower bound, holds with high probability. Let $\varepsilon\in(0,1)$, which will be fixed later. For given constants $c_i$, $i=1,\dots,4$, and $C$, we define $\mathcal{E}_N:=\mathcal{E}_N(C,c_1,c_2,c_3,c_4)$ to be the set of realizations of $S$ such that: for all $n\in [N^{1-\varepsilon},N]$,
\begin{align}
    \sup_{x\in B_n}R(0,x)\leq c_1n\Psi(n),&\qquad R(0,B_{Cn}^c)\leq c_1Cn\Psi(n),    \label{e1}\\
  R\left(0,B_{Cn}^c\right)&\geq c_2Cn\Psi(n),    \label{e2}\\
    |B_n|&\geq c_3 n, \label{e3}\\
    |B_{Cn}|&\leq c_4 Cn.    \label{e4}
\end{align}
(Here, and similarly elsewhere, $B_{Cn^c}$ is the complement of $B_{Cn}$.) We further define $\tilde{\mathcal{E}}_N$ to be the event that $\mathcal{E}_N$ holds and also, for a given constant $c_5$, for all $n\in [N^{1-\varepsilon},N]$,
\begin{equation}\label{e5}
  \sup_{x\in B_{n/c_5C}}R(0,x)\leq \frac{c_1}{c_5C}n\Psi(n).
\end{equation}

We begin with the estimates on a single scale. 
\begin{lemma}\label{lem-eff-e2}
Fix $c_2\in(0,\frac{1}{20})$. 
There exists a universal constant $c_0\ge 1$ such that 
\begin{equation}\label{eff-lem-e2}
    P(R\left(0,B_n^c\right)\geq c_2n\Psi(n))\ge 1-c_0(\log n)^{-1}.
\end{equation}
\end{lemma}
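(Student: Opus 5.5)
We need a lower bound on $R(0,B_n^c)$, the effective resistance in $\mathcal{G}$ from $0$ to the vertices $S_k$ with $k>n$ that are not in $B_n$. The plan is to use cut times: any cut point of the walk is a point through which all current must pass. Fix a small constant $\delta$, for instance $\delta=1/4$. Consider the time window $[\delta n, 2\delta n]$. Suppose $k$ in this window is a \emph{global} cut time, meaning $S[0,k]\cap S[k+1,\infty)=\emptyset$. Then every path in $\mathcal{G}$ from $0$ to a point of $B_n^c$ must pass through the edge $\{S_k,S_{k+1}\}$.

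Chaining these edges gives the bound. If $k_1<\dots<k_J$ are the global cut times in the window, then $0$ and $B_n^c$ are separated by the edges $\{S_{k_i},S_{k_i+1}\}$. These separating edges are in series. So by the series law and Rayleigh monotonicity (shorting everything between consecutive cut edges),
\[
R\bigl(0,B_n^c\bigr)\ \ge\ R_{\mathcal{G}}\bigl(S_{k_1},S_{k_J}\bigr)\ \ge\ \#\{\text{global cut times in } [\delta n,2\delta n]\}-1 .
\]
More precisely, $R$ is bounded below by the resistance between the first and last such points. That resistance is at least $R_{\mathcal{G}_{k_1,k_J}}(S_{k_1},S_{k_J})$, since cut points decouple the graph.

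It then suffices to show that, with probability at least $1-c_0(\log n)^{-1}$, the number of global cut times in $[\delta n,2\delta n]$ is at least $20c_2\, n\Psi(n)\asymp n(\log n)^{-1/2}$. Lawler's results in four dimensions give that the probability a fixed time is a global cut time is of order $(\log n)^{-1/2}$. So the expected number of cut times in the window is $\asymp n(\log n)^{-1/2}$. The restriction $c_2<1/20$ is presumably what makes the needed constant fit within $\delta\cdot$(the cut-time density). We need a concentration bound with error $(\log n)^{-1}$.

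I would obtain this by a second moment computation: estimate $\mathrm{Var}(\#\text{cut times})\le C(\log n)^{-1}\,(E\#)^2$ using the known asymptotic decorrelation of the cut-time events at separated times in $d=4$. Then apply Chebyshev. This is essentially the estimate underlying \cite{exact} (their variance bound for cut times or loop-erased length). Alternatively, and more robustly, I would use the resistance directly. Since $R(0,S_k)\ge R_{\mathcal{G}_{k_1,k_J}}(S_{k_1},S_{k_J})$, it suffices to lower-bound $R_{\mathcal{G}}(S_{\delta n},S_{2\delta n})$-type quantities. The concentration $|R_{\mathcal{G}_{a,b}}(S_a,S_b)-(b-a)\Psi(b-a)|\le \frac12(b-a)\Psi$ with probability $1-O((\log n)^{-1})$ should follow from (\ref{cite-dssw-mainthm}) together with the variance estimates of \cite{DS-SW-4d}.

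The main obstacle is the effect of the rest of the path, $S[0,\delta n]$ and $S[2\delta n,\infty)$, which can create shortcuts. I would handle this by requiring no intersection between $S[0,\delta n]$ and $S[2\delta n,\infty)$, and no intersection of $S[\delta n,2\delta n]$ with the far future. Long-range non-intersection in $d=4$ fails only with probability $O((\log n)^{-1})$, since the intersection probability of two walks started at distance $\sqrt n$ apart decays like $1/\log n$. A short-range cut time near $\delta n$ and near $2\delta n$ then gives series decomposition. Controlling these boundary events with error exactly $(\log n)^{-1}$ is the delicate step.
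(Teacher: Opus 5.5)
Your reduction is sound. A global cut time $k<n$ makes $\{S_k,S_{k+1}\}$ a bridge separating $0$ from $B_n^c$, so $R(0,B_n^c)$ is at least the number of such $k$. The route nevertheless has two genuine gaps.

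\textbf{First gap: the count loses an unknown constant, so it cannot cover every $c_2<1/20$.} For $a<b$, the cut edges in $[a,b)$ also separate $S_a$ from $S_b$. Rayleigh monotonicity and translation invariance then give
\[
E\,\#\{\text{global cut times in }[a,b)\}\le E\,R_{\mathcal{G}_{a,b}}(S_a,S_b)=(b-a)\Psi(b-a).
\]
\begin{itemize}
\item In your window $[n/4,n/2]$ the mean count is at most $\tfrac n4\Psi(n)(1+o(1))$. By Markov's inequality, your target $20c_2n\Psi(n)$ therefore fails with probability bounded away from zero as soon as $c_2>1/80$.
\item Even the correct target $c_2n\Psi(n)$ would require the per-time cut probability to be at least $4c_2c_{\mathrm{res}}(\log n)^{-1/2}$. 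Using the whole window $[0,n)$ it would still need to be at least $c_2c_{\mathrm{res}}(\log n)^{-1/2}$. This comparison between the cut-time constant and $c_{\mathrm{res}}$ is not available.
\item The bound $1/20$ is not a density condition. In the paper it is geometric: it lets sixteen blocks $[(i-1)c_2n,ic_2n]$, plus a margin $a_n=n(\log n)^{-6}$, fit before time $n$.
\end{itemize}
To reach every $c_2<1/20$ you must bound below by block resistances rather than cut-time counts. For example, $R_{\mathcal{G}_{0,2c_2n}}(0,S_{2c_2n})$ has typical size about $2c_2n\Psi(n)$, which is twice the threshold.

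\textbf{Second gap: the error rate $(\log n)^{-1}$ is unsupported.} You do not justify either $\mathrm{Var}(\#)\le C(\log n)^{-1}(E\#)^2$ or single-block concentration of $R_{\mathcal{G}_{a,b}}(S_a,S_b)$ with error $O((\log n)^{-1})$. The asymptotic \eqref{cite-dssw-mainthm} concerns only the mean. The inputs actually available are weaker than what you need:
\begin{itemize}
\item \cite[Lemma 2.3.1]{exact} bounds the lower tail of one block resistance only by $O(\log\log n/(\log n)^{1/2})$.
\item Your last paragraph needs a cut time within $a_n$ of each end of the window. By \cite[Lemma 7.7.4]{Lawb}, each such requirement fails with probability of order $\log\log n/\log n$.
\end{itemize}
Both bounds exceed $(\log n)^{-1}$.

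\textbf{The missing idea is amplification through independence.} The paper proceeds as follows.
\begin{itemize}
\item It takes four independent blocks $[t_{4(i-1)},t_{4i}]$ with $t_j=jc_2n$.
\item In at least three of them it asks for local cut times within $a_n$ on either side of $t_{4i-2}$. This fails with probability $O((\log\log n/\log n)^2)$.
\item It uses the long-range non-intersection event of \cite[Theorem 4.3.6]{Lawb}, the only cost of order $(\log n)^{-1}$, to make these cut times global.
\item On the resulting event, $\{R(0,B_n^c)\le c_2n\Psi(n)\}$ forces three independent block resistances, over $[0,t_2]$, $[t_2,t_6]$ and $[t_6,t_{10}]$, each to be at most $c_2n\Psi(n)+O(a_n)$.
\end{itemize}
That last event has probability $O((\log\log n)^3(\log n)^{-3/2})=o((\log n)^{-1})$, which gives the lemma.
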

\begin{proof}
For $i\ge 0$, let $t_i=ic_2n$ and 
\[    I_i=[t_{i-1},t_i]\quad \mbox{for }\ i\ge 1.\]
For two intervals $I$ and $I'$, we write 
\begin{equation*}
    I\not\leftrightarrow I' \quad \mbox{if }\ S(I)\cap S(I')=\emptyset.
\end{equation*}
We define the event $F$ by 
\[    F=\left\{I_i\not\leftrightarrow I_j\mbox{ for all }i,j\in\{1,2,\cdots,16\}\mbox{ with }|i-j|>1,\quad [0,t_{15}]\not\leftrightarrow[t_{16},\infty)\right\}.\]
Then by \cite[Theorem 4.3.6]{Lawb}, there exists a universal constant $C_0>0$ such that 
\begin{equation*}
    P(F^c)\le \frac{C_0}{2}(\log n)^{-1},
\end{equation*}
and we can assume $C_0\ge 1$ without loss of generality.

Next, let $a_n=n(\log n)^{-6}$. 
We define the event $G_i~(i=1,2,3,4)$ by 
\begin{equation*}
    G_i=\left\{\begin{gathered}
        \exists k_i\in[t_{4i-2}-a_n,t_{4i-2}],\ \exists k'_i\in[t_{4i-2},t_{4i-2}+a_n]
        \\ \mbox{ such that }[t_{4(i-1)},k_i]\not\leftrightarrow[k_i+1,t_{4i}],\mbox{ and }[t_{4(i-1)},k'_i]\not\leftrightarrow[k'_i+1,t_{4i}]
    \end{gathered}\right\},
\end{equation*}
for $i=1,2,3$ and 
\begin{equation*}
    G_4=\left\{\begin{gathered}
        \exists k_4\in[t_{14}-a_n,t_{14}],\ \exists k'_4\in[t_{14},t_{14}+a_n]
        \\ \mbox{ such that }[t_{14},k_4]\not\leftrightarrow[k_4+1,\infty),\mbox{ and } [t_{14},k'_4]\not\leftrightarrow[k'_4+1,\infty)
    \end{gathered}\right\}.
\end{equation*}
Note that by definition, $G_i$ are independent and 
\[    [0,k_i]\not\leftrightarrow [k_i+i,\infty);\quad [0,k'_i]\not\leftrightarrow [k'_i+1,\infty)\]
hold on $F\cap G_i$. 

By \cite[Lemma 7.7.4]{Lawb}, there exists a universal constant $C'>0$ such that 
\[    P(G_i^c)\le C'\frac{\log \log n}{\log n}.\]
Thus, if we define 
\[    G=\left\{G_i\mbox{ occurs for at least three }i\in\{1,2,3,4\}\right\},\]
then we have 
\[    P(G)\le C''\left(\frac{\log \log n}{\log n}\right)^2,\]
for some universal constant $C''>0$. 

Now, suppose $F\cap G$ occurs. 
We can assume $G_1\cap G_2\cap G_3$ holds since other cases can be handled in the same way. 
Since $k_i\in[t_{4i-2}-a_n,t_{4i-2}],k'_i\in[t_{4i-2},t_{4i-2}+a_n]~(i=1,2,3)$ are cut times, we have 
\begin{align*}
    R&(0,B_n^c) \\
    &=R_{\mathcal{G}_{0,k_1}}(0,S(k_1))+\sum_{i=1}^3 R_{\mathcal{G}_{k_i,k'_i}}(S(k_i),S(k'_i))+\sum_{i=1}^2 R_{\mathcal{G}_{k'_i,k_{i+1}}}(S(k'_i),S(k_{i+1}))\\
    &\qquad +R_{\mathcal{G}_{k'_3,\infty}}(S(k'_3),B_n^c)  \\
    &\ge R_{\mathcal{G}_{0,t_2}}(0,S(t_2))-a_n+\sum_{i=1}^3 R_{\mathcal{G}_{k_i,k'_i}}(S(k_i),S(k'_i))   \\
    &\quad +\sum_{i=1}^2 \left(R_{\mathcal{G}_{t_{4i-2},t_{4i+2}}}(S(t_{4i-2}),S(t_{4i+2}))-2a_n\right)+R_{\mathcal{G}_{k'_3},\infty}(S(k'_3),B_n^c)
\end{align*}
by the triangle inequality. Thus, the event 
\[    \{R(0,B_n^c)\le c_2 n\Psi(n)\}\cap F\cap G_1\cap G_2\cap G_3,\]
implies that the events 
\begin{itemize}
\setlength{\itemsep}{0pt}
    \item $\left\{R_{\mathcal{G}_{0,t_2}}(0,S(t_2))-a_n\le c_2n\Psi(n)\right\}$,
    \item $\left\{R_{\mathcal{G}_{t_{2},t_{6}}}(S(t_{2}),S(t_{6}))-2a_n\le c_2n\Psi(n)\right\}$,\quad and 
    \item $\left\{R_{\mathcal{G}_{t_{6},t_{10}}}(S(t_{6}),S(t_{10}))-2a_n\le c_2n\Psi(n)\right\}$,
\end{itemize}
which are independent of each other, occur. 
Thus, by \cite[Lemma 2.3.1]{exact}, it holds that 
\[    P(\{R(0,B_n^c)\le c_2 n\Psi(n)\}\cap F\cap G)\le C'''\frac{(\log\log n)^3}{(\log n)^{3/2}}\]
for a universal constant $C'''>0$. 
Consequently, we have 
\begin{align*}
    P(\{R(0,B_n^c)\le c_2 n\Psi(n)\})&\le C'''\frac{(\log\log n)}{(\log n)^{3/2}}+P(F^c)+P(G^c)   \\
    &\le c_0(\log n)^{-1},
\end{align*}
from which \eqref{eff-lem-e2} follows. 
\end{proof}

\begin{lemma}\label{lem-e1-e5}
Fix $c_1>1$, $c_4\ge 1$ and $C\ge 1$. 
There exist a universal constant $c_3\in(0,1)$, and constants $c'=c'(c_1)>0$ and $N_1=N_1(c_1,c_5,C)$ such that 
\begin{align*}
	P\left(\begin{gathered}
	\sup_{x\in B_n}R(0,x)\le c_1n\Psi(n),\ R(0,B_{Cn}^c)\le c_1Cn\Psi(n),
		\\ |B_n|\ge c_3n,\ |B_{Cn}|\le c_4Cn,\ \sup_{x\in B_n/c_5C}R(0,x)\le \frac{c_1}{c_5C}n\Psi(n)
	\end{gathered}\right)\ge 1-c'(\log n)^{-3/2},
\end{align*}
hold for $n\ge N_1$. 
\end{lemma}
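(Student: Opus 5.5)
We bound the probability of the complement of each of the five events separately by $O((\log n)^{-3/2})$ and then take a union bound over these five events; no independence between them is needed. The volume events are handled first, since they are the easiest.

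\emph{Volume events.} For $|B_n|\ge c_3 n$ we invoke Lemma~\ref{lem-vol-Bn} with $c_3$ equal to the universal constant $r$ given there. This gives failure probability $O(n^{-1})$, which is $o((\log n)^{-3/2})$.

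For $|B_{Cn}|\le c_4 Cn$ with $c_4\ge 1$ there is nothing to prove. Indeed $|B_{Cn}|=|S[0,Cn]|\le Cn+1$, and the constant discrepancy is absorbed by the integer-part convention, or by enlarging $N_1$.

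\emph{Resistance upper bounds.} Next we treat $\sup_{x\in B_n}R(0,x)\le c_1 n\Psi(n)$ and its rescaled version $\sup_{x\in B_{n/c_5C}}R(0,x)\le \frac{c_1}{c_5C}n\Psi(n)$. Write $m=n/(c_5C)$. Slow variation of $\Psi$ from \eqref{cite-dssw-mainthm} gives $\Psi(m)/\Psi(n)\to1$. Hence for $n\ge N_1(c_1,c_5,C)$ the second event is implied by
\[
\sup_{x\in B_m}R(0,x)\le c_1' m\Psi(m), \qquad c_1'=\tfrac{1+c_1}{2}>1 .
\]
So both events reduce to a single-scale statement: for any fixed $c>1$,
\[
P\Bigl(\sup_{x\in B_m}R(0,x)> c\,m\Psi(m)\Bigr)=O_c\bigl((\log m)^{-3/2}\bigr).
\]
Since $\log m\asymp\log n$, this yields the required order for both events.

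This single-scale estimate is the concentration upper tail for the resistance. By monotonicity, $R(0,x)\le R_{\mathcal{G}_{0,m}}(0,x)$. Moreover, for $x=S_j$ with $j\le m$, series decomposition along the path gives $R_{\mathcal{G}_{0,m}}(0,S_j)\le R_{\mathcal{G}_{0,j}}(0,S_j)$, which is at most $j$, i.e.\ the number of steps. To control the supremum over $j$ we proceed as in the proof of Lemma~\ref{lem-vol-low-1}. We use the upper-tail bound from \cite{exact}, namely (2.17) there together with \cite[Lemma 2.3.1]{exact}; this gives $P\bigl(R_{\mathcal{G}_{0,m}}(0,S_m)>c\,m\Psi(m)\bigr)=O((\log m)^{-3/2})$ for $c>1$. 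We combine it with a dyadic chaining over subintervals.

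The chaining works as follows. Split $[0,m]$ into blocks of length $m(\log m)^{-K}$. The resistance across each block is deterministically at most its length, so the blocks contribute negligibly. The coarse skeleton of $O((\log m)^{K})$ points is then handled by an increment decomposition in which, at each dyadic level, the resistance of the sub-walk is bounded by a constant multiple of its expectation. This requires the sum over levels of the excess to be small, which is achieved by choosing the slack $c-1$ to be distributed geometrically across levels.

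\emph{Escape resistance.} The event $R(0,B_{Cn}^c)\le c_1Cn\Psi(n)$ is simpler. Since $S_{Cn+1}\in B_{Cn}^c$ up to negligible revisits, we have
\[
R(0,B_{Cn}^c)\le R_{\mathcal{G}_{0,Cn+1}}(0,S_{Cn+1}).
\]
It therefore follows from the single-endpoint upper-tail estimate at scale $Cn$. Slow variation again lets us pass from $\Psi(Cn)$ to $\Psi(n)$.

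\emph{Main obstacle.} The hard step is the uniform-in-$x$ supremum bound with error $O((\log n)^{-3/2})$ rather than the weaker $(\log n)^{-1}$. A naive union bound over many points loses logarithmic factors. I would first try to get the supremum from a bounded number of endpoint estimates, using cut times: between consecutive cut times in a fixed finite partition, the resistance is additive and increasing along the path. This requires controlling the failure of cut times near $O(1)$ partition points. By \cite[Lemma 7.7.4]{Lawb} such failures have probability $\log\log n/\log n$ each, so an argument in the style of Lemma~\ref{lem-eff-e2}, requiring failure at two independent locations, would be needed to reach the $(\log n)^{-3/2}$ order.
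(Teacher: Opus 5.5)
Your volume estimates and your slow-variation reduction of the rescaled event to a single-scale statement agree with the paper. The gap is that single-scale statement itself, $P(\sup_{x\in B_m}R(0,x)>c\,m\Psi(m))=O((\log m)^{-3/2})$. You reduce it to an endpoint upper tail but do not actually prove it.

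\textbf{The chaining cannot reach this order.} For the deterministic block bound to be negligible against $m\Psi(m)\asymp m(\log m)^{-1/2}$, blocks must have length at most $\varepsilon m(\log m)^{-1/2}$. So the finest level has at least of order $(\log m)^{1/2}$ pieces. A union bound with per-piece failure probability $O((\log m)^{-3/2})$ then yields at best $O((\log m)^{-1})$. (That weaker order is in fact all Proposition \ref{prop-tilde-EN} later uses, but it does not prove the lemma as stated.)

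\textbf{The cut-time fallback does not help either.} Additivity across cut edges controls $R(0,S_{k_i})$ only at the $O(1)$ chosen cut times. Resistance is not increasing along the path between cut times: a dangling excursion between $S_{k_i}$ and $S_{k_{i+1}}$ can be further from $S_{k_i}$ in resistance than $S_{k_{i+1}}$ is. The only deterministic bound for such intermediate points is the number of steps, which is of order $n$. So the uniform-in-$x$ control, which you correctly flag as the main obstacle, is left open.

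What is missing is a maximal estimate at a single scale, and this is what the paper uses. It takes the supremum bounds at the scales $n$, $Cn$ and $n/(c_5C)$ directly from (2.17) of \cite{exact}, together with slow variation of $\Psi$, with no chaining from endpoint estimates. This is the same input that controls $\max_{l,m\in J_x}R_{\mathcal{G}_{l,m}}(S_l,S_m)$ with error $O((\log n)^{-3/2})$ in the proof of Lemma \ref{lem-vol-low-1}.

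\textbf{Escape-resistance step.} This step is wrong as written. $S_{Cn+1}\in B_{Cn}$ whenever the walk backtracks at time $Cn+1$, an event of probability $1/8$, so this is not a negligible revisit. Hence $R(0,B_{Cn}^c)\le R_{\mathcal{G}_{0,Cn+1}}(0,S_{Cn+1})$ is unjustified. The paper instead picks $x\in B_{Cn}^c$ adjacent in $\mathcal{G}$ to some $y\in B_{Cn}$; such a pair exists since $B_{Cn}$ is a finite subset of an infinite connected graph. This gives $R(0,B_{Cn}^c)\le R(0,x)\le R(0,y)+1\le\sup_{y\in B_{Cn}}R(0,y)+1$, reducing this event to the same supremum bound at scale $Cn$.
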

\begin{proof}
Let $c_3$ be $r$ taken in Lemma \ref{lem-vol-Bn}. Then we have 
\[	P(|B_n|\ge c_3n,\ |B_{Cn}|\le c_4Cn)\ge 1-O(n^{-1}),\]
for $c_4\ge 1$. 
Let 
\[	\partial B_{Cn}=\{x\in S[0,\infty)\setminus B_{Cn}\mathrel{:}\exists y\in B_n\mbox{ such that }\{x,y\}\mbox{ is an edge of }\mathcal{G}\},\]
\textit{i.e.}\ $\partial B_n$ is the outer boundary of $B_{Cn}$ in $\mathcal{G}$. 
Let $x\in \partial B_{Cn}$ and take a neighboring vertex $y\in B_{Cn}$. Then we have 
\begin{equation*}
	R(0,B_{Cn}^c)\le R(0,x)\le R(0,y)+1\le \sup_{y\in B_{Cn}}R(0,y)+1.
\end{equation*}
Since $\Psi$ is slowly-varying, the statement follows from (2.17) of \cite{exact}. 
\end{proof}

\begin{proposition}\label{prop-tilde-EN}
For any $\delta\in(0,1)$, there exists $\varepsilon\in(0,1)$ such that 
\[    P\left(\tilde{\mathcal{E}}_N\right)\ge 1-\delta\]
holds for sufficiently large $N$. 
\end{proposition}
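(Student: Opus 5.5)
The plan is to discretise the range of scales $n\in[N^{1-\varepsilon},N]$ into a geometric grid, apply Lemmas \ref{lem-eff-e2} and \ref{lem-e1-e5} at each grid point, and take a union bound. Monotonicity in $n$ of the quantities in \eqref{e1}--\eqref{e5}, together with the slow variation of $\Psi$, then extends the bounds from the grid to all intermediate $n$, at the cost of adjusting constants. Concretely, set $n_j:=2^{j}N^{1-\varepsilon}$ for $j=0,1,\dots,J$, where $J=\lceil \varepsilon\log N/\log 2\rceil$. For any $n\in[n_j,n_{j+1}]$ we have $B_{n_j}\subseteq B_n\subseteq B_{n_{j+1}}$. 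The quantities $\sup_{x\in B_n}R(0,x)$, $|B_n|$ and $R(0,B_{Cn}^c)$ are all monotone nondecreasing in $n$; the last one because $B_{Cn}^c$ shrinks as $n$ grows. Moreover $n\Psi(n)$ is comparable, up to a factor tending to $2$, to $n_j\Psi(n_j)$ and to $n_{j+1}\Psi(n_{j+1})$, since $\Psi(n)\sim c_{\mathrm{res}}(\log n)^{-1/2}$ by \eqref{cite-dssw-mainthm}.

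We therefore bound each quantity at $n$ by the appropriate grid value. Upper bounds at $n$ (the first part of \eqref{e1}, \eqref{e4}, \eqref{e5}) come from the corresponding events at $n_{j+1}$; lower bounds at $n$ (\eqref{e2}, \eqref{e3}) come from the events at $n_j$. The second part of \eqref{e1}, $R(0,B_{Cn}^c)\le c_1Cn\Psi(n)$, should use $n_{j+1}$ since the resistance is increasing in $n$.

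Each grid event costs a factor of $2$ (or $2+o(1)$) in its constant. This is harmless because the statement allows us to choose the constants. Since the proposition does not name them, I would prove it with $(c_1,c_2,c_3,c_4,c_5)$ given by the lemma constants adjusted by factors of $2$: run the grid argument with $c_1/3$ in place of $c_1$, $3c_2$, $c_4/2$, and so on, so that after interpolation the bounds hold with the stated constants. (If the constants are meant to be fixed in advance, we need $c_1$ large and $c_2$ small enough to absorb these factors, which matches the hypotheses $c_1>1$ and $c_2<1/20$.)

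For the union bound, Lemma \ref{lem-e1-e5} gives failure probability at most $c'(\log n_j)^{-3/2}$ at each grid point. Since $\log n_j\ge(1-\varepsilon)\log N$, summing over the $J+1\asymp\varepsilon\log N$ grid points gives $O(\varepsilon(\log N)^{-1/2})\to0$. The main obstacle is \eqref{e2}: Lemma \ref{lem-eff-e2} only gives failure probability $c_0(\log n)^{-1}$ at each scale. Summed over $\asymp\varepsilon\log N$ grid points, this gives roughly $c_0\varepsilon/(\log 2\,(1-\varepsilon))$, which does not vanish as $N\to\infty$. This is precisely why the statement is phrased with $\varepsilon$ depending on $\delta$. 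Choosing $\varepsilon$ so small that $2c_0\varepsilon/\log 2<\delta/2$ makes this contribution at most $\delta/2$, while the other contributions vanish for large $N$. Note that Lemma \ref{lem-eff-e2} should be applied with $Cn_j$ in place of $n$, giving $R(0,B_{Cn_j}^c)\ge c_2Cn_j\Psi(Cn_j)$; because $\Psi$ is slowly varying, $\Psi(Cn_j)\ge\Psi(n_j)/2$ for large $N$, which is absorbed into the constant $c_2$. The extension of \eqref{e3} from $n_j$ to $n\in[n_j,n_{j+1}]$ uses $|B_n|\ge|B_{n_j}|\ge c_3n_j\ge (c_3/2)n$, and the other conditions are extended similarly. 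Combining all the bounds gives $P(\tilde{\mathcal{E}}_N^c)\le\delta$ for sufficiently large $N$.
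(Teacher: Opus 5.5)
Your proposal is correct and is essentially the paper's argument: a dyadic grid of $O(\varepsilon\log N)$ scales covering $[N^{1-\varepsilon},N]$, a union bound in which the $c_0(\log n)^{-1}$ failure probability of the resistance lower bound \eqref{e2} dominates and sums to $O(\varepsilon)$, monotonicity in $n$ to pass to intermediate scales, and a choice of $\varepsilon$ small in terms of $\delta$. Your tracking of the constant losses (the factor $2$ from interpolation, and replacing $\Psi(Cn_j)$ by $\Psi(n_j)$ via slow variation) is more careful than the paper's bare appeal to monotonicity. One quibble with your parenthetical: with a ratio-$2$ grid the grid-level constants must be about $c_1/2$ and $2c_2$, so the lemmas then need $c_1>2$ and $c_2<1/40$, not merely $c_1>1$ and $c_2<1/20$; if that matters, a grid of ratio $1+\theta$ fixes it, costing only a factor $1/\log(1+\theta)$ in the $O(\varepsilon)$ term.
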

\begin{proof}
Let $c_0>0$ be as defined in Lemma \ref{lem-eff-e2}. 
For fixed $c_1,c_2,c_4$ and $C$, and $c_3$ taken as in Lemma \ref{lem-vol-Bn}, we define the event $J_n$ by 
\[    J_n=\{\mbox{\eqref{e1}--\eqref{e5} hold for }n\}.\]
Let 
\[    v_N=\left\lceil\frac{\varepsilon\log N}{\log 2}\right\rceil.\]
Note that by monotonicity of volume and effective resistance, it suffices to show that the event $\displaystyle \bigcap_{k=0}^{v_N} J_{2^kN}$ occurs with probability at least $1-\delta$.  

It follows from Lemmas \ref{lem-eff-e2} and \ref{lem-e1-e5} that 
\[    P(J_n^c)\le 2c_0(\log n)^{-1}.\]
By taking the union bound, we have 
\begin{align*}
    P\left(\tilde{\mathcal{E}}_N^c\right)&\le \sum_{k=0}^{v_N}2c_0(\log (2^{-k}N))^{-1}    \\
        &\le \left\lceil\frac{\varepsilon\log N}{\log 2}\right\rceil\frac{2c_0}{(1-\varepsilon)}(\log N)^{-1}\le \frac{2c_0\varepsilon}{\log 2(1-\varepsilon)}.
\end{align*}
Taking $\varepsilon\le 2\delta(c_0+2\delta)^{-1}$, we obtain the desired bound. 
\end{proof}

\subsection{Conditional probability}\label{volsec-3}
Let 
\begin{equation}\label{nidef}
	N_i=\exp\left\{\exp\{\exp\{ai\}\}\right\}
\end{equation}
where  
\[    a=1+\frac{(c_0^{-1}\cdot c_{\mathrm{res}})\vee e}{c_1},\]
with $c_{\mathrm{res}}$ being the constant defined in \cite[Theorem 1.1]{DS-SW-4d} (see \eqref{cite-dssw-mainthm}).
Let 
\begin{equation}\label{Fidef}
	\mathcal{F}_i=\sigma\left(\left\{S(j)\mathrel{:}0\le j\le N_i\right\}\right),
\end{equation}
be the $\sigma$-field generated by $\{S(j)\mathrel{:}0\le j\le N_i\}$. 
The aim of this subsection is to prove the following proposition. 
\begin{proposition}\label{prop-conditional-AE}
Let $\tilde{\mathcal{E}}_N$ be as defined in the previous subsection. 
Fix $c_1>1,c_2\in(0,\frac{1}{20}),c_4\ge 1$ and $C\ge 1$.  
There exists universal constants $p'\in(0,1)$ and $c_3\in(0,1)$ such that for any sufficiently large $i$,
\[\inf_{m\in[N_i^{1-\varepsilon},N_i]}P\left(\{|A_{N_i}\cap B_m|\ge p'm\}\cap \tilde{\mathcal{E}}_{N_i}\mathrel{}\middle|\mathrel{} \mathcal{F}_{i-1}\right)\ge p',\quad\mbox{a.s.}\]
\end{proposition}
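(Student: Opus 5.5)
The plan is to combine Proposition \ref{prop-vol-low} with Proposition \ref{prop-tilde-EN}. The dependence on $\mathcal{F}_{i-1}$ is handled by the Markov property together with the fact that $N_{i-1}$ is negligible compared with every relevant scale $m\in[N_i^{1-\varepsilon},N_i]$. Indeed, from \eqref{nidef}, $\log N_{i-1}=(\log N_i)^{e^{-a}}$, so $N_{i-1}\le N_i^{\eta}$ for any fixed $\eta>0$ once $i$ is large.

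First, I would show that the unconditional probability of $\{|A_{N_i}\cap B_m|\ge pm\}\cap\tilde{\mathcal{E}}_{N_i}$ is at least $p/2$. By Proposition \ref{prop-vol-low} the first event has probability at least $p$ uniformly in $m$. By Proposition \ref{prop-tilde-EN} with $\delta=p/2$, after fixing $\varepsilon$ accordingly, $P(\tilde{\mathcal{E}}_{N_i}^c)\le p/2$. A union bound then gives the claim.

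Second, I would transfer this to the conditional statement. Write $S=S[0,N_{i-1}]$ followed by the independent walk $S'(j)=S(N_{i-1}+j)-S(N_{i-1})$. Both events are monotone in the right way under removing or adding an initial segment of length $N_{i-1}$, up to modifications at the scale $N_{i-1}$.
\begin{itemize}
\item Volumes of $B_n$ change by at most $N_{i-1}\ll c_3 n$ for $n\ge N_i^{1-\varepsilon}$.
\item Effective resistances between points visited after time $N_{i-1}$ can only decrease when the prefix is added, since more edges means smaller resistance. Resistance from $0$ changes by at most $N_{i-1}$, which is the diameter bound of the prefix path, and this is $\ll n\Psi(n)$.
\item The resistance lower bound \eqref{e2} could fail because prefix edges create shortcuts. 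To control this, I would require that $S'$ avoids the prefix set $S[0,N_{i-1}]$ after time $N_{i-1}^{2}$, say. That event has conditional probability $1-o(1)$ uniformly over prefix configurations, by the four-dimensional intersection estimates (\cite[Lawb]) for a walk started at distance $\gg N_{i-1}^{1/2}\log$ from a set of size $N_{i-1}$. On that event, \eqref{e2} for $S$ follows from the corresponding bound for $S'$ at slightly adjusted constants, using series decomposition at a cut time.
\end{itemize}
So I would define events $\mathcal{A}'$ depending only on $S'$: the analogues of $|A\cap B_m|\ge 2p'm$ and of $\tilde{\mathcal{E}}$ with slightly better constants, together with the avoidance event. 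These have probability at least $p/4$ independently of $\mathcal{F}_{i-1}$, and on them the target event holds deterministically for every prefix. This yields the bound with $p'=\min(p/4,p/2)$. For $A_{N_i}$, points $S(N_{i-1}+j)$ good for $S'$ at all scales $r N_i\ge e^{(\log N_i)^{1/2}}\gg N_{i-1}$ remain good for $S$, because adding edges only enlarges resistance balls. The loss of the first $N_{i-1}$ points of $B_m$ is negligible relative to $m$.

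The main obstacle is the third item: ensuring that the lower resistance bound \eqref{e2} and the condition for \emph{all} $n\in[N_i^{1-\varepsilon},N_i]$ survive uniformly over every possible prefix, including atypical ones. I would first try to handle it with the avoidance event above plus the cut-time decomposition, as in Lemma \ref{lem-eff-e2}. If uniformity over prefixes is problematic, the fallback is to note that only $|S[0,N_{i-1}]|\le N_{i-1}+1$ matters for the intersection estimate, and that bound holds for every prefix.
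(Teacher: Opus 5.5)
Your proposal is correct and follows essentially the paper's route: condition on $\mathcal{F}_{i-1}$ via the strong Markov property at time $N_{i-1}$, use that $N_{i-1}$ is negligible compared with $N_i^{1-\varepsilon}$, transfer the volume bounds, membership of $A_{N_i}$ and the resistance upper bounds \eqref{e1}, \eqref{e5} by Rayleigh monotonicity, control \eqref{e2} through non-intersection of the later walk with the prefix $S[0,N_{i-1}]$, and finish with the union bound of Proposition \ref{prop-tilde-EN}. The only difference is the mechanism for \eqref{e2}: the paper moves from $0$ to $x_i=S(b_i)$ using $|R(0,B_{Cn}^c)-R(x_i,B_{Cn}^c)|\le 2R(0,x_i)$ and then uses avoidance of the prefix after time $b_i$, whereas you use avoidance after time $N_{i-1}^2$ (uniform in the prefix, since $P(S'[t,\infty)\cap K\neq\emptyset)\le C|K|t^{-1}$ in four dimensions) plus a series decomposition at a cut time; you should therefore add to your list of $S'$-events the existence of a cut time of $S'$ in, say, $[N_{i-1}^2,2N_{i-1}^2]$, which holds with probability $1-o(1)$ by \cite[Lemma 7.7.4]{Lawb}, as for the event $F$ in Section \ref{2coll}.
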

\begin{proof}
We begin with the volume lower bound. Note that if $x\in S[N_{i-1},N_i]$ satisfies 
\begin{align*}
	\left|V_R(x,rN_i\Psi(rN_i))\cap S[N_{i-1},N_i]\right|\ge crN_i\quad\mbox{for all }r\in\left(e^{(\log N_i)^{1/2}}/N_i,1\right],
\end{align*}
then $x\in A_{N_i}$. 
Furthermore, by monotonicity of the effective resistance, we have that 
\begin{align*}
	&\left\{\left|V_R(x,rN_i\Psi(rN_i))\cap S[N_{i-1},N_i]\right|\ge crN_i\right\}
	\\ &\quad\supset\left\{
	\left|\{y\in S[N_{i-1},N_i]\mathrel{:}R_{\mathcal{G}_{N_{i-1},\infty}}(x,y)\le rN_i\Psi(rN_i)\}\right|\ge crN_i
	\right\}
\end{align*}
Note that the event on the right-hand side is independent of $S[0,N_{i-1}-1]$. 
Thus, by the strong Markov property and translation invariance, we have that for $x\in S[N_{i-1},N_i]$, 
\[    P(x\in A_{N_i}\mid \mathcal{F}_{i-1})\ge 1-O((\log N_i)^{-\frac{1}{20}}),\quad\mbox{a.s.}\]
Since $N_i-N_{i-1}\ge N_i/2$, almost the same argument as Proposition \ref{prop-vol-low} yields that 
\begin{align*}
    P(|\mathcal{A}_{N_i}\cap B_m|\ge pm\mid\mathcal{F}_i)&\ge P\left(|A_{N_i}\cap S[N_{i-1},N_i]\cap B_m|\ge p'm \mid\mathcal{F}_{i-1}\right)   \\
        &\ge p'(1-(\log N_i)^{-\frac{1}{20}}),
\end{align*}
for some $p'$. 

Next, we will handle $\tilde{\mathcal{E}}_{N_i}$. 
It is trivial that  
\[	|B_{N_i}|\ge |S[N_{i-1},N_i]|,\]
where the right-hand side is independent of $S[0,N_{i-1}-1]$. 
Note that for any $x\in B_{N_{i-1}}$ and $y\in B_n$ with $n\in [N_i^{1-\varepsilon},N_i]$, 
\begin{align*}
	R(0,x)&\le N_{i-1}\le c_1 n\Psi(n),	\\
	R(0,y)&\le R(0,S(N_{i-1}))+R(S(N_{i-1}),y)	\\
		&\le R_{\mathcal{G}_{N_{i-1}},\infty}(S(N_{i-1}),y)+\frac{c_1-1}{2}n\Psi(n),
\end{align*}
by the definition of $N_i$.  
Also note that $R_{\mathcal{G}_{N_{i-1}},\infty}(S(N_{i-1}),y)$ is independent of $S[0,N_{i-1}]$. 
Thus, the same argument as the previous section implies that for $n\in[N_i^{1-\varepsilon},N_i]$, 
\begin{align*}
	P\left(\mbox{\eqref{e1} and \eqref{e3}--\eqref{e5} holds}\mid \mathcal{F}_{i-1}\right)\ge 1-O((\log n)^{-3/2}). 
\end{align*}

Now we handle the conditional probability that \eqref{e2} holds. Define $b_i:=N_{i-1}+N_i^{1-\varepsilon}(\log N_i)^{-6}$, let $x_i=S(b_i)$ and let $\hat{g}$ be the Green kernel of the simple random walk on $\mathcal{G}$ killed at the first exit time of $B_{Cn}$. 
For any $C>0$, 
\begin{align}
	|R(0,B_{Cn}^c)-R(x,B_{Cn}^c)|&=|\hat{g}(0,0)-\hat{g}(x_i,x_i)|	\notag\\
		&\le |\hat{g}(0,0)-\hat{g}(0,x_i)|+|\hat{g}(0,x_i)-\hat{g}(x_i,x_i)|	\notag\\
		&\le 2R_{B_{CN_i}}(0,x_i) \notag\\
        &\le 2R_{\mathcal{G}_{0,b_i}}(0,x_i)\le 2N_i^{1-\varepsilon}(\log N_i)^{-6},\notag
\end{align}
where we applied \cite[Corollary 4.1.6]{Kumsf} to obtain the second inequality. 
Thus, we have 
\begin{equation}\label{prop-cond-01}
    P(R(0,B_{Cn}^c)\le c_2Cn\Psi(n)\mid\mathcal{F}_{i-1})\le P(R(x_i,B^c_{Cn})\le 2c_2Cn\Psi(n)\mid \mathcal{F}_{i-1}).
\end{equation}
Furthermore, by \cite[Theorem 1.2.1]{Lawb}, we have 
\[    P\left(\left|S(N_i^{1-\varepsilon}(\log N_i)^{-6})\right|\ge \frac{(N_i^{1-\varepsilon})^{1/2}}{(\log N_i)^4}\right)\ge 1-O\left((\log N_i)^{-4}\right),\]
and by translation invariance and the strong Markov property, this implies 
\[    P\left(\left.\left|S(N_{i-1})-x_i\right|\ge \frac{(N_i^{1-\varepsilon})^{1/2}}{(\log N_i)^4}\ \right|\ \mathcal{F}_{i-1}\right)\ge 1-O\left((\log N_i)^{-4}\right).\]
Thus, applying \cite[Proposition 1.5.10]{Lawb}, we obtain 
\begin{align*}
    P&(S[0,N_{i-1}]\cap S\left[N_{i-1}+N_i(\log N_i)^{-6},\infty\right)\neq\emptyset\mid\mathcal{F}_{i-1}) \\
        &\le P\left(\begin{gathered}B(S(N_{i-1}),N_{i-1})\cap S[N_{i-1}+N_i(\log N_i)^{-6},\infty)=\emptyset,    \\
        \left|S(N_{i-1})-x_i\right|\ge \frac{(N_i^{1-\varepsilon})^{1/2}}{(\log N_i)^4}\end{gathered}
        \mathrel{}\middle|\mathrel{}\mathcal{F}_{i-1}\right)   \\
        &\quad +P\left(\left|S(N_{i-1})-x_i\right|\le \frac{(N_i^{1-\varepsilon})^{1/2}}{(\log N_i)^4}\mathrel{}\middle|\mathrel{} \mathcal{F}_i\right)  \\
        &\le \max_{x\in \mathbb{Z}^4\setminus B(0,(\log N_i)^{-4}(N_i^{1-\varepsilon})^{1/2})}P(B(0,N_{i-1})\cap S[0,\infty)\neq\emptyset)+O\left((\log N_i)^{-4}\right)   \\
        &\le O\left(N_i^{-1+2\varepsilon}\right)+O\left((\log N_i)^{-4}\right)\le O\left((\log N_i)^{-4}\right),
\end{align*}
since $N_{i-1}\ll N_i^{\frac{\varepsilon}{2}}$ by definition. 
Note that on the event  
\[    \left\{S[0,N_{i-1}]\cap S[N_{i-1}+N_i\left(\log N_i)^{-6}\right),\infty)=\emptyset\right\},\]
it holds that
\[    R(x,B_{Cn}^c)=R_{\mathcal{G}_{N_{i-1},\infty}}(x_i,B_{Cn}^c).\] 
As a consequence, the right-hand side of \eqref{prop-cond-01} is bounded above by 
\begin{align*}
    P(R(0,B_{Cn}^c)\le c_2Cn\Psi(n)\mid\mathcal{F}_{i-1})&\le P(R(x_i,B^c_{Cn})\le 2c_2Cn\Psi(n)\mid \mathcal{F}_{i-1})   \\
        &\le P(R_{\mathcal{G}_{N_{i-1},\infty}}(x_i,B_{Cn}^c)\mid\mathcal{F}_{i-1})+O\left((\log N_i)^{-4}\right)   \\
        &\le O((\log n)^{-1})
\end{align*}
where the last inequality follows from the translation invariance. 

Finally, repeating the same argument as Proposition \ref{prop-tilde-EN} and taking $\delta=p'/2$, we obtain the desired result. 

\end{proof}

\section{Heat kernel bounds}\label{sec:HKbounds}

The aim of this section is to derive heat kernel bounds that hold on certain `good' events. The main quantity of interest is defined by setting, for a given realisation of $\mathcal{G}$, 
\begin{equation}\label{hkdef}
    p_n(x,y):=\frac{P^\mathcal{G}_x\left(X_n=y\right)}{\mathrm{deg}(y)},\qquad x,y\in V(\mathcal{G}),\:n\geq 0
\end{equation}
where $X=(X_n)_{n\geq 0}$ is the discrete-time simple random walk on $\mathcal{G}$, $P^\mathcal{G}_x$ is its law when starting from $x$, and $\mathrm{deg}(y)$ is the usual graph degree of $y$ in $\mathcal{G}$. We start by presenting some heat kernel upper bounds as Proposition \ref{hkupper}.  In particular, we give upper heat kernel bounds for points in the set $A_N$, which was defined at \eqref{andef}, using only the lower volume bound in the argument. We also give appropriate bounds for points close to the origin when this point is in $A_N$. As for the lower heat kernel bound, given as Proposition \ref{hklower} below, we again follow a widely-applied approach, but, in addition to volume information, we need to use some more detailed information about the resistance on the space.

\begin{proposition}\label{hkupper}
For some deterministic constants $c_1$, $c_2$ and $C$, the following holds $P$-a.s. If $x\in A_N$ for some $N\geq 1$, then
\[p_n({x},{x})\leq \left\{\begin{array}{ll}
  C\sqrt{1/n},   &  \text{ for every }n\geq 1,\\
  C\sqrt{\Psi(n)/n},   & \text{ for every }n\in [c_1\exp(3\log^{1/2}(N)),c_2N^2\Psi(N)].
\end{array}\right.\]
Moreover, if $0\in A_N$, then the same bounds hold for every $x\in B_{c_0\sqrt{n/\Psi(n)}}$, i.e.\
\[\sup_{x\in B_{c_0\sqrt{n/\Psi(n)}}} p_n(x,x)\leq \left\{\begin{array}{ll}
  C\sqrt{1/n},   &  \text{ for every }n\geq 1,\\
  C\sqrt{\Psi(n)/n},   & \text{ for every }n\in [c_1\exp(3\log^{1/2}(N)),c_2N^2\Psi(N)],
\end{array}\right.\]
where again $c_0$ is a deterministic constant.
\end{proposition}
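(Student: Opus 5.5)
Both bounds will come from the standard argument that converts a volume lower bound in resistance balls into an on-diagonal heat kernel upper bound. The version we use is the one from Barlow--Coulhon--Kumagai and Kumagai's lecture notes (cf.\ \cite{Kumsf}). For any vertex $x$ and any $r>0$, take $y$ with $R(x,y)\le r$ and set $f(z)=p_{n}(x,z)$. The resistance inequality $|f(x)-f(y)|^2\le R(x,y)\,\mathcal{E}(f,f)$ gives
\[
f(x)\le \frac{1}{V_R(x,r)}+\sqrt{r\,\mathcal{E}(f,f)} .
\]
We then combine this with the spectral-type relation $\mathcal{E}(p_n(x,\cdot),p_n(x,\cdot))\le (p_{2n-2}(x,x)-p_{2n}(x,x))/2$, up to the usual discrete-time adjustments (one works with even times or with the lazy walk, and the degrees are bounded by $8$). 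Since $n\mapsto p_{2n}(x,x)$ is nonincreasing, this yields $\mathcal{E}\le C p_{n}(x,x)/n$. Writing $\psi=p_{2n}(x,x)$, we arrive at the inequality
\[
\psi\le \frac{C}{V_R(x,r)}+C\sqrt{r\psi/n}.
\]
We choose $r$ so that $V_R(x,r)\gtrsim \sqrt{n/\Psi(n)}$ while $r\lesssim \sqrt{n\Psi(n)}$. Then both terms are of order $\sqrt{\Psi(n)/n}$, and solving the quadratic inequality gives the claimed bound.

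\emph{First bound.} The bound $C n^{-1/2}$ for every $n$ is the deterministic bound valid on any infinite connected graph, used in Remark (c). It follows from the same inequality with the trivial estimate $V_R(x,r)\ge r$ (resistance balls along a path in an infinite connected graph), taking $r=\sqrt{n}$. So no assumption on $x$ is needed here.

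\emph{Second bound.} Let $x\in A_N$. We take $r=sN\Psi(sN)$ with $s\in(e^{(\log N)^{1/2}}/N,1]$, so that the definition \eqref{andef} gives $V_R(x,r)\ge V_R^{(N)}(x,r)\ge csN$. We choose $s$ so that $sN\asymp\sqrt{n/\Psi(n)}$. Because $\Psi$ is slowly varying by \eqref{cite-dssw-mainthm}, $\Psi(sN)\asymp\Psi(n)$, and hence $r\asymp\sqrt{n\Psi(n)}$, as required. The admissible range of $s$ translates into a range of $n$:
\begin{itemize}
\item the upper constraint $s\le1$ becomes $n\le c_2N^2\Psi(N)$;
\item the lower constraint $sN\ge e^{(\log N)^{1/2}}$ becomes $n\gtrsim e^{2(\log N)^{1/2}}\Psi^{-1}$.
\end{itemize}
The lower constraint is implied by $n\ge c_1\exp(3\log^{1/2}N)$, with room to spare for the logarithmic factor. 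The main bookkeeping is to make sure the constants from slow variation are uniform over this range of scales. This holds because $\Psi(m)\sim c_{\mathrm{res}}(\log m)^{-1/2}$ and, on this range, $\log(sN)$ and $\log n$ are comparable up to a factor of $2$.

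\emph{Moreover part.} Suppose $0\in A_N$ and $x\in B_{c_0\sqrt{n/\Psi(n)}}$. Then $x=S_k$ with $k\le c_0\sqrt{n/\Psi(n)}$, so $R(0,x)\le d(0,x)\le k$. The resistance ball around $x$ of radius $r+k$ contains the ball around $0$ of radius $r$, so
\[
V_R(x,r+k)\ge V_R(0,r)\ge csN .
\]
With $r\asymp\sqrt{n\Psi(n)}$ as above, the radius $r+k$ remains $\lesssim\sqrt{n\Psi(n)}$ provided $k\le r$. This is the main obstacle: the crude graph-distance bound on $R(0,x)$ only gives $k\lesssim\sqrt{n/\Psi(n)}$, which exceeds $\sqrt{n\Psi(n)}$ by a factor $\Psi(n)^{-1}$. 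There are two ways to handle this.
\begin{itemize}
\item Interpret $B_{c_0\sqrt{n/\Psi(n)}}$ via steps, and instead use the volume bound around $0$ at the larger scale $k$. Since $R(0,x)\le k$, the ball $B_R(0,k)$ has volume $\gtrsim k/\Psi(k)$ when $k=sN\Psi(sN)$, and plugging $r\asymp k$ in then gives $\psi\lesssim \Psi/k+\sqrt{k/n}$, which is too weak.
\item Use that $B_n$ itself has small resistance diameter. I would try this first: on $A_N$ one uses the resistance scale $sN\Psi(sN)$ together with the fact that $B_{sN}$ (steps) has volume $\ge c sN$. Choosing $k=sN$, we have $x\in B_{sN}$ and the whole of $B_{sN}$ is within resistance $\lesssim sN\Psi(sN)$ of $x$, by the construction in the proof of Lemma \ref{lem-vol-low-1} (bounds on the resistance of path segments). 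The constant $c_0$ is then chosen so that $c_0\sqrt{n/\Psi(n)}\le sN$.
\end{itemize}
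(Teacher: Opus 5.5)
Your argument for the two bounds at a point $x\in A_N$ is the paper's. The paper quotes $p_{2n}(x,x)\le 4m/n+4/V_R(x,m)$ from \cite{CHllt} (a variant of \cite[Proposition 3.2]{BCK}) instead of re-deriving it. It then takes $m=n^{1/2}$ with $V_R(x,m)\ge cm$, and $m=\sqrt{n\Psi(n)}$ with $V_R(x,m)\ge cm\log(m)^{1/2}$ from \eqref{andef}, translating the admissible range of $n$ as you do. If you re-derive it, note that on this bipartite graph $\mathcal{E}(p_n(x,\cdot),p_n(x,\cdot))=p_{2n}(x,x)$ exactly. So the adjustment you allude to is genuinely needed, e.g.\ run the argument with $p_n(x,\cdot)+p_{n+1}(x,\cdot)$.

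\textbf{The gap is in the moreover part.} Your preferred fix asserts that, given $0\in A_N$, all of $B_{sN}$ lies within resistance $\lesssim sN\Psi(sN)$ of $x$ ``by the construction in the proof of Lemma \ref{lem-vol-low-1}''. This does not follow. $A_N$ is defined solely through lower bounds on volumes of resistance balls. The segment bounds $\max_{l,l'\in J_x}R_{\mathcal{G}_{l,l'}}(S_l,S_{l'})\le 2^{-k}N\Psi(2^{-k}N)$ in that proof belong to a high-probability event that implies membership of $A_N$; they are not consequences of membership. The hypothesis $0\in A_N$ gives no control on $R(0,S_k)$ beyond the trivial $R(0,S_k)\le k$, so this route fails. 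Since you also, rightly, discard your first bullet, your proposal does not prove the moreover part at the stated radius $c_0\sqrt{n/\Psi(n)}$.

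The paper's proof of the moreover part is exactly the argument you set aside as crude. With $m=\sqrt{n\Psi(n)}$ and $x\in B_{m/2}$, one has $R(0,x)\le d(0,x)\le m/2$. Hence $V_R(x,m)\ge V_R(0,m/2)\ge c(m/2)\log(m/2)^{1/2}$ whenever $m/2$ lies in the range \eqref{mrange}, and the bound follows from the same heat kernel inequality. This covers only $x\in B_{c_0\sqrt{n\Psi(n)}}$. So your observation that the graph-distance bound falls short of $B_{c_0\sqrt{n/\Psi(n)}}$ by a factor $\Psi(n)^{-1}$ is correct, and it applies to the paper's argument as well.

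The smaller radius is, however, all that is used later. In the proof of Lemma \ref{hdashbound} the bound is needed only for starting points in $B_{\eta N_{i-1}}$ and times $n\ge c_1\exp(3\log^{1/2}(N_i))$. By \eqref{nidef}, $N_{i-1}$ is far smaller than $\exp(\log^{1/2}(N_i))$, which is itself at most of the order of $\sqrt{n\Psi(n)}$ for such $n$. So the right way to complete your proof is to state and prove the moreover part with radius $c_0\sqrt{n\Psi(n)}$. Your own remark ``provided $k\le r$'' already gives this, without importing segment resistance estimates that $A_N$ does not carry.
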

\begin{proof} We start by checking part the claim for a general $x\in A_N$. By a simple parameterization (setting $m=rn\Psi(rn)$), we have from the definition of $A_N$ that, for $x\in A_N$,
\begin{equation}\label{vb1}
V_R\left(x,m\right)\geq c m\log(m)^{1/2},
\end{equation}
whenever
\begin{equation}\label{mrange}
\exp(\log^{1/2}(N))\Psi(\exp(\log^{1/2}(N)))\leq m\leq N\Psi(N).
\end{equation}
Moreover, since the resistance distance is bounded above by the graph distance, and the graph we are considering is unbounded, it moreover holds that
\begin{equation}\label{vb2}
V_R\left(x,m\right)\geq c m,
\end{equation}
for any $m\geq 1$. 

Now, it holds for any $n,m\geq 1$ that
\begin{equation}\label{hk1}
p_{2n}(x,x)\leq \frac{4m}{n}+\frac{4}{V_R(x,m)},
\end{equation}
see the proof of \cite[Lemma 11]{CHllt}, for example. (The latter result is a simple modification of \cite[Proposition 3.2]{BCK}.) Thus, applying \eqref{vb2} with $m=n^{1/2}$ yields
\[p_{2n}(x,x)\leq \frac{C}{n^{1/2}},\]
for any $n\geq 1$, which gives the first claim of the result. As for the second claim, we set $m=\sqrt{n\Psi(n)}$. When $m$ is in the range at \eqref{mrange}, the volume bound at \eqref{vb1} can then be applied in conjunction with \eqref{hk1} to give
\[p_{2n}(x,x)\leq C\sqrt{\frac{\Psi(n)}{n}},\]
as desired. It remains to translate the range of $m$ to the range of $n$. In particular, if $n\geq c_1\exp(3\log^{1/2}(N))$, then 
\begin{eqnarray*}
m&\geq& \sqrt{c_1\exp(3\log^{1/2}(N))\Psi(c_1\exp(3\log^{1/2}(N)))}\\
&\geq & c(c_1)\exp\left(\tfrac{3}{2}\log^{1/2}(N)\right)\log(N)^{-1/4}\\
&\geq &c(c_1)\exp\left(\log^{1/2}(N)\right),
\end{eqnarray*}
where the constant $c(c_1)$ can be made greater than 1 by taking $c_1$ suitably large. Similarly, if $n\leq c_2N^2\Psi(N)$, then
\[m\leq \sqrt{c_2N^2\Psi(N)\Psi(c_2N^2\Psi(N))}\leq C(c_2)N\Psi(N),\]
where the constant $C(c_2)$ can be made smaller than 1 by taking $c_2$ suitably small. Hence, for such choices of $c_1$ and $c_2$, if $n\in [c_1\exp(3\log^{1/2}(N)),c_2N^2\Psi(N)]$, then $m$ is indeed in the range at \eqref{mrange}. This completes the first part of the proof.

The remaining claim is proved in a similar fashion. If $x\in B_{m/2}$, then $R(0,x)\leq m/2$, and so, if and $0\in A_N$, we obtain that
\[V_R\left(x,m\right)\geq V_R(0,m/2)\geq c (m/2)\log(m/2)^{1/2},\]
whenever $m/2$ is in the range described at \eqref{mrange}. From this, the heat kernel bound follows. Note that, by suitably adjusting the constants, we can assume they are the same in both parts of the lemma. 
\end{proof}

In order to state our heat kernel lower bound, recall the good event $\tilde{\mathcal{E}}_N$ defined in Section \ref{volsec-2} (see \eqref{e1}--\eqref{e5}); this is the event upon which our argument will apply. Moreover, for a subset $B\subseteq V(\mathcal{G})$, we write 
\[p^B_n(x,y):=\frac{P^\mathcal{G}_x\left(X_n=y,\:\tau_B^c>n\right)}{\mathrm{deg}(y)},\qquad x,y\in V(\mathcal{G}),\:n\geq 0,\]
for the heat kernel when $X$ is killed at 
\[\tau_B^c:=\inf\left\{n\geq 0:\:X_n\not\in B\right\},\]
that is, the exit time of $X$ from $B$. We note that the proof of the following result follows that of \cite[Lemma 3.2]{CDA}. For the statement of the result, we define $d(x,y)$ to be the usual graph distance between vertices $x,y\in V(\mathcal{G})$.

\begin{proposition}\label{hklower} 
If $\tilde{\mathcal{E}}_N$ holds with $C\geq 1\vee (12c_1/c_2)$ and $c_5\geq 32c_4/c_3$, then there exists constants $\tilde{c}_1$, $\tilde{c}_2$, $\tilde{c}_3$ such that
\[\inf_{y\in B_{\tilde{c}_0\sqrt{m\Psi(m)}}}p_{m}^{B_{CN}}(y,x)\geq \tilde{c}_1\sqrt\frac{\Psi(m)}{m}\mathbf{1}_{\{m-d(y,x)\text{ is even}\}},\]
for all $x\in B_{\tilde{c}_2\sqrt{m/\Psi(m)}}$, $m\in [N^{2-\varepsilon},\tilde{c}_3N^2\Psi(N)]$.
\end{proposition}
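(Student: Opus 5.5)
The plan is the standard resistance-based route to near-diagonal lower bounds, as in \cite[Lemma 3.2]{CDA} and \cite{BCK}: first obtain an on-diagonal lower bound for the killed heat kernel from exit-time estimates, and then spread it to nearby points using a resistance H\"older estimate. Throughout, fix $m$ in the stated range and choose the scale $n$ so that $n\Psi(n)\asymp \sqrt{m\Psi(m)}$, equivalently $n\asymp\sqrt{m/\Psi(m)}$ up to slowly varying corrections. The constraint $m\in[N^{2-\varepsilon},\tilde c_3N^2\Psi(N)]$ should place $n$ in $[N^{1-\varepsilon},N]$, so the bounds \eqref{e1}--\eqref{e5} are available at scale $n$. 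Set $B:=B_{Cn}\subseteq B_{CN}$; by monotonicity of killed kernels it suffices to bound $p^{B}_m$ from below.

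\textbf{Step 1: exit-time and Green kernel bounds.} Let $g_B$ be the Green kernel killed on exiting $B$. For $x\in B_{n/c_5C}$, the bound \eqref{e5} gives $R(0,x)\le \frac{c_1}{c_5C}n\Psi(n)$. Combining this with \eqref{e2} and the identity $g_B(x,x)=R(x,B^c)$, we get $R(x,B^c)\ge c_2Cn\Psi(n)-R(0,x)\ge \frac{c_2}{2}Cn\Psi(n)$, using $C\ge 12c_1/c_2$.

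For the upper bound, $E_x\tau_B\le R(x,B^c)|B|\le c_1C^2c_4\,n^2\Psi(n)$ by \eqref{e1} and \eqref{e4}. For the lower bound, we use $E_x\tau_B\ge \sum_y g_B(x,y)\deg(y)$ together with $g_B(x,y)\ge g_B(x,x)-R(x,y)$. Over $y\in B_n$ this gives $g_B(x,y)\ge (\frac{c_2C}{2}-2c_1)n\Psi(n)\gtrsim Cn\Psi(n)$, and since $|B_n|\ge c_3n$ by \eqref{e3}, we obtain $E_x\tau_B\ge c\,C n^2\Psi(n)$.

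These two bounds feed the usual "$P_x(\tau_B\le t)$ small" argument. The key input is $E_x\tau_B\le E_x[\tau_B;\tau_B\le t]+\dots$, which gives $P_x(\tau_B>t)\ge c$ for $t=\delta n^2\Psi(n)$ with $\delta$ small. Note $t\asymp m$ by the choice of $n$.

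\textbf{Step 2: on-diagonal lower bound.} By Cauchy--Schwarz,
\[
c\le P_x(\tau_B>m,X_m\in B_{n/c_5C})^2\le |B_{n/c_5C}|\,p^B_{2m}(x,x)\cdot C',
\]
up to degree factors. Using $|B_{n/c_5C}|\le n/(c_5C)$ trivially gives $p^B_{2m}(x,x)\ge c/n\asymp\sqrt{\Psi(m)/m}$.

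The main obstacle is ensuring the walk stays inside $B_{n/c_5C}$ with positive probability, rather than only inside $B$. I would handle this by applying Step 1 with $B_{n/c_5C}$ in place of $B$, with $n$ rescaled. This is why \eqref{e5} and the condition $c_5\ge 32c_4/c_3$ appear: they compare $|B_{Cn}|$ against $c_3$-volume at the inner scale. I expect the constant bookkeeping here to be the delicate point. The bound for odd times follows from parity and the Markov property.

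\textbf{Step 3: off-diagonal extension.} The standard resistance H\"older estimate (e.g.\ \cite[Proposition 4.1]{BCK}) gives
\[
|p^B_{2m}(x,y)-p^B_{2m}(x,x)|^2\le R(x,y)\,p^B_{2m}(x,x)/m\lesssim R(x,y)/(nm).
\]
For $y\in B_{\tilde c_0\sqrt{m\Psi(m)}}$, \eqref{e1} gives $R(x,y)\le$ a small constant times $n\Psi(n)$. Since $m\asymp n^2\Psi(n)$, the difference is then at most half the diagonal bound. Applying the same argument to the pair $(y,x)$, and using the two-step parity decomposition $p_{m}=\sum p_{m-1}P$, yields the bound with $\mathbf 1_{\{m-d(y,x)\text{ even}\}}$. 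Finally, the ranges for $x$ and $y$ are translated through the relation between $n$ and $m$, which fixes $\tilde c_0$, $\tilde c_2$ and $\tilde c_3$.
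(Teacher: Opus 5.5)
There is a genuine gap in Step~3, the off-diagonal step. This is also where your route departs from the paper's. The graph $\mathcal{G}\subseteq\mathbb{Z}^4$ is bipartite, so for the discrete-time walk the one-step Dirichlet form has no time decay. For $f=p^{B}_k(x,\cdot)$, extended by $0$ off $B$, one gets $\mathcal{E}(f,f)=p^{B}_{2k}(x,x)-p^{B}_{2k+1}(x,x)=p^{B}_{2k}(x,x)$.

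Correspondingly, your inequality $|p^B_{2m}(x,y)-p^B_{2m}(x,x)|^2\le R(x,y)\,p^B_{2m}(x,x)/m$ cannot hold for all $y$. For $y\sim x$ the left side is $p^B_{2m}(x,x)^2$, since $p^B_{2m}(x,y)=0$. The right side is at most $p^B_{2m}(x,x)/m$. Together these force $p^B_{2m}(x,x)\le 1/m$, which contradicts your own Step~2 bound $p^B_{2m}(x,x)\ge c/n$ when $m\asymp n^2\Psi(n)$. You apply the inequality to every $y\in B_{\tilde c_0\sqrt{m\Psi(m)}}$, so the claim that the difference is at most half the diagonal value fails for all $y$ at odd distance. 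Restricting afterwards to even $d(x,y)$ does not help, because the one-step estimate you cite gives no $1/m$ factor for any $y$.

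The missing ingredient is the parity-corrected H\"older estimate:
\begin{itemize}
\item Apply $|g(y)-g(x)|^2\le R(x,y)\mathcal{E}(g,g)$ to $q_k:=p^B_k(x,\cdot)+p^B_{k+1}(x,\cdot)$. The spectral theorem for the killed transition operator gives $\mathcal{E}(q_k,q_k)\le \frac4k\,p^B_{2\lfloor k/2\rfloor}(x,x)$.
\item This requires an on-diagonal \emph{upper} bound $p^B_{2\lfloor k/2\rfloor}(x,x)\le C'/n$. It follows from \eqref{hk1}, since $V_R(x,2c_1n\Psi(n))\ge|B_n|\ge c_3n$ by \eqref{e1}, \eqref{e3} and \eqref{e5}.
\item A lower bound on $q_k(x,y)$ then transfers to whichever of $p^B_k(x,y)$, $p^B_{k+1}(x,y)$ has the correct parity.
\end{itemize}
The admissible size of $R(x,y)$ is then a multiple of $n\Psi(n)$ proportional to the square of your on-diagonal constant, which decays polynomially in $C$. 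Condition \eqref{e5} with the given $c_5$ does not supply this. You must instead shrink $\tilde c_0$ and $\tilde c_2$ and apply \eqref{e1} at scale $\tilde c_2 n$, which is admissible since $\tilde c_2n\ge N^{1-\varepsilon}$ for large $N$.

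The ``main obstacle'' in Step~2 is not real. Cauchy--Schwarz over all of $B_{Cn}$ gives $P^{\mathcal{G}}_x(\tau_{B_{Cn}^c}>m)^2\le\mu(B_{Cn})\,p^{B_{Cn}}_{2m}(x,x)\le 8c_4Cn\,p^{B_{Cn}}_{2m}(x,x)$ by \eqref{e4}, which is exactly what the paper does. There is no need to confine $X_m$ to $B_{n/c_5C}$. Your proposed fix would in any case only control times of order $(n/c_5C)^2\Psi(n)\ll m$.

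Nor is Step~2 where \eqref{e5} and $c_5\ge 32c_4/c_3$ enter. The paper proves the on-diagonal bound at $0$ and moves the other point by the first-hitting decomposition $p^{B_{Cn}}_m(x,0)\ge P^{\mathcal{G}}_x(\tau_0\le m\wedge\tau_{B_{Cn}^c})\,p^{B_{Cn}}_{2\lfloor m/2\rfloor}(0,0)$ for $m-d(0,x)$ even. This uses only that $k\mapsto p^{B_{Cn}}_{2k}(0,0)$ is non-increasing, so parity is handled automatically. The failure probability is at most $R(0,x)\mu(B_{Cn})/m+R(0,x)/R(x,B_{Cn}^c)$, by the commute-time identity and a resistance ratio. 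Here \eqref{e5} with $c_5\ge32c_4/c_3$ makes the first term at most $\frac14$, at a ratio independent of the on-diagonal constant.

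The general starting point $y$ is then handled by re-centring \eqref{e1}, \eqref{e2} and \eqref{e5} at $y$ with perturbed constants. This is why $y$ is confined to $B_{\tilde c_0\sqrt{m\Psi(m)}}$. Once parity-corrected, your H\"older route avoids the re-centring and would even let both points range over a ball of size $c\sqrt{m/\Psi(m)}$. The cost is the extra upper bound and the $C$-dependent smallness.
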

\begin{proof}
We start by showing a version of the result for the transition density of the random walk started from the origin. In particular, we will check that: if $\tilde{\mathcal{E}}_N$ holds with $C\geq 1\vee (3c_1/c_2)$ and $c_5\geq 32c_4/c_3$, then there exists constants $\tilde{c}_1$, $\tilde{c}_2$, $\tilde{c}_3$ such that
\begin{equation}\label{0bound}
p_{m}^{B_{CN}}(0,x)\geq \tilde{c}_1\sqrt\frac{\Psi(m)}{m}\mathbf{1}_{\{m-d(0,x)\text{ is even}\}},\quad \forall x\in B_{\tilde{c}_2\sqrt{m/\Psi(m)}},\quad m\in [N^{2-\varepsilon},\tilde{c}_3N^2\Psi(N)].
\end{equation}

For the first part of the proof, we only need to assume that the event $\mathcal{E}_N$ holds with $Cc_2\geq 3c_1$. To begin with, we simply observe that
\begin{equation}\label{exit}
    E^\mathcal{G}_0\tau_{B_{Cn}^c}=\sum_{x\in B_{Cn}}g_{B_{Cn}}(x)\mu(\{x\}),
\end{equation}
where $\mu$ is the measure on $V(\mathcal{G})$ satisfying $\mu(\{x\})=\mathrm{deg}(x)$, and $g_{B_{Cn}}(x)$ is the expected occupation density of $X$ at $x$ when started from 0 and run up $\tau_{B_{Cn}^c}$; that is,
\[g_{B_{Cn}}(x):=\frac{1}{\mu(\{x\})}E^\mathcal{G}_0\sum_{m=0}^{\tau_{B_{Cn}^c}-1}\mathbf{1}_{\{X_m=x\}}.\]
In particular, we have from \cite[Proposition 4.1.4]{Kumsf} that
\[2g_{B_{Cn}}(x) = R(0, B_{Cn}^c) +R(x, B_{Cn}^c)-R_{ B_{Cn}^c}(0,x),\]
where $R_{ B_{Cn}^c}(0,x)$ represents the effective resistance between 0 and $x$ when $B_{Cn}^c$ is `fused', see \cite[Section 2]{Kumsf} for background on effective resistance and \cite[(4.9)]{Kumsf} specifically for the definition of the fused effective resistance. Since the effective resistance is a metric, we can apply the triangle inequality to deduce that
\[g_{B_{Cn}}(x)\geq R(0, B_{Cn}^c) - R(0,x).\]
(Here, we have also used the fact that $R_{ B_{Cn}^c}(0,x)\leq R(0,x)$.) In particular, applying \eqref{e1} and \eqref{e2}, we observe that, for $x\in B_n$ (with $n$ in the appropriate range), 
\[g_{B_{Cn}}(x)\geq (c_2C-c_1)n\Psi(n).\]
Since we are assuming that $c_2C\geq 3c_1$, if we insert this into \eqref{exit}, we are able to deduce that
\begin{equation}\label{lb}
E^\mathcal{G}_0\tau_{B_{Cn}^c}\geq c_1n\Psi(n)\mu(B_n)\geq c_1c_3n^2\Psi(n),
\end{equation}
where the second inequality follows from \eqref{e3}.

Next, applying the Markov property at time $m$, it is also possible to deduce that
\begin{equation}\label{mp}
E^\mathcal{G}_0\tau_{B_{Cn}^c}\leq m+ P^\mathcal{G}_0\left(\tau_{B_{Cn}^c}>m\right)\sup_{x\in B_{Cn}}E^\mathcal{G}_x\tau_{B_{Cn}^c}.
\end{equation}
By the commute time identity (see \cite[Proposition 10.6]{LPW}, for example), the supremum on the right-hand side can be bounded as follows:
\[\sup_{x\in B_{Cn}}E^\mathcal{G}_x\tau_{B_{Cn}^c}\leq \sup_{x\in B_{Cn}}R(x,B_{Cn}^c)\mu(B_{Cn})\leq 2c_1c_4C^2n^2\Psi(n),\]
where we have also applied \eqref{e1} and \eqref{e4}. Returning to \eqref{mp}, from this and \eqref{lb}, we obtain
\[P^\mathcal{G}_0\left(\tau_{B_{Cn}^c}>m\right)\geq \frac{ c_1c_3n^2\Psi(n)-m}{ 2c_1c_4C^2n^2\Psi(n)}.\]
Taking $m=\tfrac{1}{2}c_1c_3n^2\Psi(n)$, this implies
\begin{equation}\label{constlower}
    P^\mathcal{G}_0\left(\tau_{B_{Cn}^c}>\tfrac{1}{2}c_1c_3n^2\Psi(n)\right)\geq \frac{ c_3}{ 4c_4C^2}.
\end{equation}
From this, to give a lower estimate on the on-diagonal part of the heat kernel, we follow  \cite[Proposition 4.3.4]{Kumsf}, for example, to deduce that, for any $m\geq 1$,
\[P^\mathcal{G}_0\left(\tau_{B_{Cn}^c}>m\right)^2= \left(\sum_{x\in B_{Cn}}p_m^{B_{Cn}}(0,x)\mu(\{x\})\right)^2\leq  p_{2m}^{B_{Cn}}(0,0)\mu(B_{Cn})\leq 4c_4Cn,\]
where the second inequality here is an application of the Cauchy-Schwarz inequality, and the third is a consequence of \eqref{e4}. In particular, taking $m$ to be equal to $m(n):=\lfloor \tfrac{1}{2}c_1c_3n^2\Psi(n)\rfloor$, combining this bound with \eqref{constlower} yields
\begin{equation}\label{odlower}
   p_{2m(n)}^{B_{Cn}}(0,0)\geq \frac{c_3}{8c_4^2C^3n}\geq \frac{c_6}{C^3}\sqrt\frac{\Psi(m(n))}{m(n)},
\end{equation}
where the constant $c_6$ depends only upon $c_1$, $c_3$ and $c_4$.

The next step will be to extend the bound at \eqref{odlower} to hold for other points. We will now also need to incorporate the bound at \eqref{e5}. By the symmetry of the transition density: for any $m\geq 1$ such that $m-d(0,x)$ is even,
\begin{align}
\nonumber p_{m}^{B_{Cn}}(0,x)&=p_{m}^{B_{Cn}}(x,0)\\
&=\sum_{k=0}^{\lfloor m/2\rfloor}P^\mathcal{G}_x\left(\tau_{B_{Cn}^c}>m-2k,\:\tau_{0}=m-2k\right)p^{B_{Cn}}_{2k}(0,0)\nonumber\\
&\geq P^\mathcal{G}_x\left(\tau_{0}\leq m\wedge \tau_{B_{Cn}^c}\right)p^{B_{Cn}}_{2\lfloor m/2\rfloor}(0,0),\label{ilb}
\end{align}
(cf.\ \cite[Lemma 3.4]{Chenchen}), where we have used the notation $a\wedge b:=\min\{a,b\}$, and the second equality is obtained by conditioning on the time $X$ first hits $0$ when started from $x$, and the inequality holds because $p^{B_{Cn}}_{2k}(0,0)$ is decreasing in $k$ (see \cite[(3.3)]{BPS}, for example). In particular, given a suitable bound on the probability in the final expression, this estimate will allow us to apply the estimate at \eqref{odlower} to obtain the desired result. Observe that
\[P^\mathcal{G}_x\left(\tau_{0}> m\wedge \tau_{B_{Cn}^c}\right)\leq P^\mathcal{G}_x\left(\tau_{0}\wedge \tau_{B_{Cn}^c}>m\right)+P^\mathcal{G}_x\left(\tau_{0}> \tau_{B_{Cn}^c}\right).\]
Appealing again to the commute time identity, the first term is bounded as follows:
\[P^\mathcal{G}_x\left(\tau_{0}\wedge \tau_{B_{Cn}^c}>m\right)\leq \frac{R(0,x)\mu(B_{Cn})}{m}\leq \frac{4c_1c_4n^2\Psi(n)}{c_5m},\]
where for the second inequality we have applied \eqref{e4} and \eqref{e5}. Additionally, by \cite[Exercise 2.36]{LP}, for example, together with \eqref{e1} and \eqref{e2}, the second term satisfies
\[P^\mathcal{G}_x\left(\tau_{0}> \tau_{B_{Cn}^c}\right)\leq \frac{R(0,x)}{R({x},B_{Cn}^c)}\leq\frac{c_1}{c_2C-c_1}.\]
Hence, 
\[P^\mathcal{G}_x\left(\tau_{0}> m\wedge \tau_{B_{Cn}^c}\right)\leq \frac{4c_1c_4n^2\Psi(n)}{c_5m}+\frac{c_1}{c_2C}.\]
Since we are assuming that $c_2C\geq 3c_1$, the second term here is bounded above by $\tfrac{1}{2}$. Moreover, we now set $m$ to be equal to $m'(n)$, which is defined to be the unique integer such that $m'(n)-d(0,x)$ is even and $\lfloor m'(n)/2\rfloor =m(n)$, where $m(n)$ was defined earlier in the proof. Then, the first term is bounded above by
\[ \frac{4c_1c_4n^2\Psi(n)}{c_5m'(n)}\leq\frac{2c_1c_4n^2\Psi(n)}{c_5m(n)}\leq \frac14 ,\]
where to ensure the final inequality we apply the assumption that $c_5\geq 32c_4/c_3$; this might require us to also take $N$ to be suitably large. Returning to \eqref{ilb}, we thus conclude, for this choice of $m'(n)$,
\[p_{m'(n)}^{B_{CN}}(0,x)\geq p_{m'(n)}^{B_{Cn}}(0,x)\geq \frac{1}{4}p^{B_{Cn}}_{2m(n)}(0,0)\geq  \frac{c_6}4\sqrt\frac{\Psi(m(n))}{m(n)},\qquad \forall x\in B_{n/c_5C},\]
where the first inequality is obvious, since the process clearly leaves $B_{Cn}$ before it leaves $B_{CN}$, and, for the final inequality, we recall \eqref{odlower}. 

We can conclude the proof of \eqref{0bound} by a simple reparameterisation of the previous bound, writing the various expressions in terms of $m$. (To ensure $m'(n)\geq N^{2-\varepsilon}$, we should take $N$ sufficiently large, but the bound is readily extended to all $N$ by reducing the constant that appears in the place of $\tilde{c}_2$ if necessary.)

To extend \eqref{0bound} to more general starting points, we first note that if $\tilde{\mathcal{E}}_N$ holds with $C\geq 1$, and $y\in B_{\delta n\Psi (n)}$ for some $n\in [N^{1-\varepsilon},N]$ then:
\begin{itemize}
    \item \eqref{e1} holds with $0$ replaced by $y$ and $c_1$ replaced by $c_1+\delta$;
    \item \eqref{e2} holds with $0$ replaced by $y$ and $c_2$ replaced by $c_2-\delta$;
    \item \eqref{e5} holds with $0$ replaced by $y$ and $c_1/c_5C$ replaced by $c_1/c_5C+\delta$.
\end{itemize}
Hence, if $\delta$ is taken to be equal to $\min\{c_1,\frac{c_2}{2},\frac{c_1}{c_5C}\}$, then the various estimates of the event $\tilde{\mathcal{E}}_N(C,2c_1,c_2/2,c_3,c_4)$ hold with 0 replaced by $y\in B_{\delta n\Psi (n)}$ and $n\in [N^{1-\varepsilon},N]$. As a consequence, the desired result follows on replacing 0 by $y$ in the proof of \eqref{0bound}. Note that the range of $y$ in the statement of the proposition comes from the fact that
$\delta n \Psi(n)$ is of the order of $c\sqrt{m'(n)\Psi(m'(n)}$.
\end{proof}

\section{Collisions of three independent random walks}\label{3coll}

In order to establish Theorem \ref{mr1}, i.e.\ show that infinitely many triple collisions occur almost-surely, for almost-every realization of $\mathcal{G}$, we  introduce the counting random variable
\[H_i\coloneqq \sum_{n=\Theta_{\eta N_{i-1}}+1}^{(\Theta_{\eta N_{i-1}}+c_2N_i^2\Psi(N_i))\wedge \Theta_{\eta N_{i}}}\mathbbm{1}_{\{X^1_n=X^2_n=X^3_n\}},\]
where $c_2$ is the constant of Proposition \ref{hkupper} (which we can assume is no greater than 1), $\eta\in(0,1)$ is another constant, $N_i$ are the constants defined at \eqref{nidef}, and
\[\Theta_{N}:=\inf\left\{n\geq 0:\:\{X^1_n,X^2_n,X^3_n\}\not\subseteq B_{N}\right\}\]
denotes the first time at which one of the random walkers exits $B_N$. In particular, $H_i$ counts the number of triple collisions occurring at level $i$. The main goal of this section is to prove the following result, from which Theorem \ref{mr1} obviously follows.

\begin{proposition}\label{hbound}
There exists a deterministic constant $c$ such that, for $P$-almost-every realization of $\mathcal{G}$, it $P^\mathcal{G}_{0,0,0}$-almost-surely holds that
    \[H_i\geq c\left(\log N_i\right)^{1/2}\]
    for infinitely many $i$.
\end{proposition}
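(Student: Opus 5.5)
We follow the second-moment strategy of \cite{CDA}, carried out at the scales $N_i$ defined at \eqref{nidef}, combined with a conditional Borel--Cantelli argument. Throughout, write $T_i:=c_2N_i^2\Psi(N_i)$ for the length of the time window at level $i$.

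\emph{Graph part.} Let $\mathcal{D}_i$ be the event that $\tilde{\mathcal{E}}_{N_i}$ holds and $|A_{N_i}\cap B_m|\ge p'm$ for a suitable $m\in[N_i^{1-\varepsilon},N_i]$. By Proposition \ref{prop-conditional-AE}, $P(\mathcal{D}_i\mid\mathcal{F}_{i-1})\ge p'$ almost surely. Each $\mathcal{D}_i$ is $\mathcal{F}_i$-measurable, up to the tail estimates already handled in that proof. Lévy's conditional Borel--Cantelli lemma therefore gives that, $P$-a.s., $\mathcal{D}_i$ occurs for infinitely many $i$.

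\emph{Walk part.} Fix a realisation of $\mathcal{G}$ and an index $i$ with $\mathcal{D}_i$. Condition on $\mathcal{G}^{(i-1)}:=\sigma(X^j_n:\:n\le\Theta_{\eta N_{i-1}},\:j=1,2,3)$. At time $\Theta_{\eta N_{i-1}}$ all three walkers lie in $B_{\eta N_{i-1}}\subseteq B_{N_{i-1}}$. Since $N_{i-1}\ll N_i^{\varepsilon}$, these starting points lie deep inside $B_{\tilde c_0\sqrt{m\Psi(m)}}$ for every $m\in[N_i^{2-\varepsilon},\tilde c_3N_i^2\Psi(N_i)]$. Define
\[
H_i'=\sum_{n\in I_i}\sum_{x\in A_{N_i}\cap B_{\tilde c_2\sqrt{n/\Psi(n)}}}\mathbf 1\{X^1=X^2=X^3=x \text{ at time } \Theta_{\eta N_{i-1}}+n\},
\]
where $I_i=[N_i^{2-\varepsilon},T_i]$ with the parity restriction. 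Killing the walks at $B_{CN_i}\subseteq B_{\eta N_i}$ (after adjusting $C$, $\eta$), we have $H_i\ge H_i'$.

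\emph{First moment.} Proposition \ref{hklower} gives each killed transition probability $\ge c\sqrt{\Psi(n)/n}$, so
\[
E H_i'\ \ge\ c\sum_{n\in I_i}|A_{N_i}\cap B_{\tilde c_2\sqrt{n/\Psi(n)}}|\left(\frac{\Psi(n)}{n}\right)^{3/2}.
\]
The set $A_{N_i}\cap B_r$ is controlled by $\mathcal{D}_i$; we need $|A_{N_i}\cap B_r|\ge cr$ for $r\asymp\sqrt{n/\Psi(n)}$, which means invoking Proposition \ref{prop-conditional-AE} along the dyadic scales $m$ simultaneously. If that union is too costly, we instead restrict $n$ to one dyadic block where it holds. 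The summand is then of order $\Psi(n)/n$, and summing over $n\in I_i$ gives $E H_i'\ge c(\log N_i)^{1/2}\cdot\varepsilon\log N_i\cdot(\log N_i)^{-1}$, i.e.\ of order $c(\log N_i)^{1/2}$. This matches the target, since $\sum_{n\le N^2}(\log n)^{-1/2}n^{-1}\asymp(\log N)^{1/2}$ over a window of logarithmic width proportional to $\varepsilon\log N$.

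\emph{Second moment.} We aim for $E(H_i')^2\le C(EH_i')^2$. Write $E(H_i')^2$ as $\sum_{n\le n'}\sum_{x,y}$ of products of heat kernels, using the Markov property at time $n$. The later factor is $p_{n'-n}(x,y)^3$ summed over $y$. Since $x\in A_{N_i}$, Proposition \ref{hkupper} gives $\sum_y p_{k}(x,y)^3\le \sup_y p_k(x,y)^2\le Cp_k(x,x)p_k(y,y)$ via Cauchy--Schwarz. This is the point where the diagonal bound is needed at $y$ too, and why the count is restricted to points of $A_{N_i}$ on both ends; we will use the $y\in A$ constraint in $H_i'$. For $k$ in the good range this is $\le C\Psi(k)/k$. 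For small $k\le c_1e^{3(\log N_i)^{1/2}}$ we use the crude bound $Ck^{-1}$, whose sum is $O((\log N_i)^{1/2})$, which is still acceptable. Hence the inner sum is $\le C(\log N_i)^{1/2}$. The outer factor is bounded by $EH_i'$ using the upper bound $p_n(x_0,x)\le C\sqrt{\Psi(n)/n}$ from Proposition \ref{hkupper}, applied near the origin. This yields $E(H_i')^2\le C(EH_i')^2$.

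\emph{Conclusion.} Paley--Zygmund, applied conditionally on $\mathcal{G}^{(i-1)}$, gives $P^{\mathcal{G}}(H_i\ge \tfrac12EH_i'\mid\mathcal{G}^{(i-1)})\ge c>0$ whenever $\mathcal{D}_i$ holds. The conditional Borel--Cantelli lemma along the infinitely many good $i$ then gives the result, provided $\Theta_{\eta N_{i-1}}<\infty$ and $H_i$ is $\mathcal{G}^{(i)}$-measurable; the latter holds by the truncation at $\Theta_{\eta N_i}$. The combination of the second-moment and conditional steps is the double Paley--Zygmund.

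The main obstacle is the second moment. The upper heat kernel bound is only available on $A_{N_i}$ and at scales $\ge e^{3(\log N_i)^{1/2}}$, so the off-diagonal sum must be controlled without uniform bounds. We will handle this by counting only collisions at points of $A_{N_i}$, and by checking that the small-time contribution, of order $(\log N_i)^{1/2}$, does not exceed the main order.
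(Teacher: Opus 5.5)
There is a genuine gap in your first-moment step, and it is exactly the difficulty that the paper's first Paley--Zygmund application resolves. To get $EH_i'\ge c(\log N_i)^{1/2}$ you need $|A_{N_i}\cap B_r|\ge cr$, with $r\asymp\sqrt{n/\Psi(n)}$, at a positive proportion of the $\asymp\varepsilon\log N_i$ dyadic scales, simultaneously in one realisation of $\mathcal{G}$. Your event $\mathcal{D}_i$ gives this at a single scale only.

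A union bound over scales is not available. In Proposition \ref{prop-conditional-AE} the infimum sits outside the probability, so each scale can fail with conditional probability as large as $1-p'$, which is not small.

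Your fallback of restricting $n$ to one dyadic block $[M,2M]$ also fails. It gives $EH_i'\asymp\sum_{n=M}^{2M}\Psi(n)/n\asymp(\log N_i)^{-1/2}$, while your bound on the re-collision sum $\sum_{k\le M}\sup_{x\in A_{N_i}}p_k(x,x)^2$ is still of order $(\log N_i)^{1/2}$. Paley--Zygmund then yields only $P(H_i'>0)\gtrsim(\log N_i)^{-1}$. Since $\log N_i=\exp\{\exp\{ai\}\}$, this is summable in $i$, so you would not even get infinitely many collisions, let alone $H_i\ge c(\log N_i)^{1/2}$.

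You have also misidentified the paper's ``double Paley--Zygmund''; it is not second moment plus conditional Borel--Cantelli. The first application is to the conditional mean $E_i=\mathbb{E}(H_i'\mid\mathcal{H}_{i-1},S)$:
\begin{itemize}
\item on $\tilde{\mathcal{B}}_i$ it is at most $C(\log N_i)^{1/2}$ by Proposition \ref{hkupper}, which is where $0\in A_{N_i}$ is used;
\item summing Proposition \ref{prop-B-tilde-i} over scales gives $\mathbb{E}(E_i\mathbf{1}_{\tilde{\mathcal{B}}_i}\mid\mathcal{H}_{i-1})\ge c(\log N_i)^{1/2}$.
\end{itemize}
Hence $E_i\ge c(\log N_i)^{1/2}$ with conditional probability bounded below. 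That is precisely the multi-scale statement you are missing.

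Within your quenched organisation the repair is of the same kind. Let $Z_i$ be the proportion of dyadic $m$ in the relevant subrange of $[N_i^{1-\varepsilon},N_i]$ for which $\{|A_{N_i}\cap B_m|\ge p'm\}\cap\tilde{\mathcal{E}}_{N_i}$ occurs. Then $Z_i\le 1$ and $E(Z_i\mid\mathcal{F}_{i-1})\ge p'$, so $P(Z_i\ge p'/2\mid\mathcal{F}_{i-1})\ge p'/2$. Taking $\mathcal{D}_i=\{Z_i\ge p'/2\}$ and using monotonicity of $r\mapsto|A_{N_i}\cap B_r|$ gives a first moment of order $(\log N_i)^{1/2}$ on $\mathcal{D}_i$.

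With this, the rest of your route goes through: Paley--Zygmund for the walks given $\mathcal{G}^{(i-1)}$ on a fixed good graph, then conditional Borel--Cantelli along the good indices. It is a genuinely different organisation from the paper's annealed one. It separates graph and walk randomness, and it needs no heat kernel upper bounds near the origin. The outer factor in your second moment is just $EH_i'$, and the trivial bound $Z_i\le 1$ replaces the paper's bound $E_i\le C(\log N_i)^{1/2}$.

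You must still supply adaptedness on the graph side. $\tilde{\mathcal{E}}_{N_i}$ involves resistances in all of $\mathcal{G}$, so $\mathcal{D}_i$ is not $\mathcal{F}_i$-measurable and L\'evy's lemma does not apply verbatim. You need an $\mathcal{F}_i$-measurable surrogate, for instance by working with $\mathcal{G}_{N_i}$ on the non-intersection events of Lemma \ref{bilem}. The paper sidesteps this by coupling with walks $X^{i,j}$ on $\mathcal{G}_{N_i}$. Then $H_i'$ is $\mathcal{H}_i$-measurable, and the good event $\tilde{\mathcal{B}}_i$ enters only through a lower bound on a conditional expectation.
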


To prove Proposition \ref{hbound}, we will apply a Borel-Cantelli argument, and for this, we need to suitably decouple the events on different scales. For this purpose, we introduce, for each $i$, $(X^{i,j})_{j=1,2,3}$, which are three independent simple random walks on $\mathcal{G}_{N_i}$, where $\mathcal{G}_n:=\mathcal{G}_{0.n}$, which was defined at the start of Section \ref{volsec-pre}. The law of this triple of walks, when started from $(x_1,x_2,x_3)$ will be denoted $P^{\mathcal{G}_{N_i}}_{x_1,x_2,x_3}$. Moreover, set
\[\bar{B}_n^\mathcal{G}:=\left\{x\in V(\mathcal{G}):\:\{x,y\}\in E(\mathcal{G})\mbox{ for some } y\in B_n\right\},\]
define $\bar{B}_n^{\mathcal{G}_{N_i}}$ similarly from $\mathcal{G}_{N_i}$, and suppose that the sets $\bar{B}_n^\mathcal{G}$ and $\bar{B}_n^{\mathcal{G}_{N_i}}$ are equipped with the edge sets induced from $\mathcal{G}$ and $\mathcal{G}_{N_i}$, respectively. Clearly, if
\[\mathcal{B}_i:=\left\{\bar{B}_{\eta N_i}^\mathcal{G}=\bar{B}_{\eta N_i}^{\mathcal{G}_{N_i}}\right\}\]
holds, then it is possible to couple the original random walks $(X^{j})_{j=1,2,3}$ and $(X^{i,j})_{j=1,2,3}$ (when the triples have the same starting points) in such a way that 
\begin{equation}\label{coupling}
X^{j,i}_n=X^j_n,\qquad \forall n\leq \Theta_{\eta N_i},\:j=1,2,3,
\end{equation}
and indeed the stopping time $\Theta_{\eta N_i}$ is the same for both versions of the random walks. (Since this will be the only case that matters in what follows, we will henceforth abuse notation and use the same symbol $\Theta_{\eta N_i}$ regardless of which triple of walks is under consideration, which should be clear from the context.) For convenience of notation, we write $\mathbb{P}$ for the annealed law of the processes $(X^j)_{j=1,2,3}$ and $(X^{i,j})_{j=1,2,3}$, $i\geq 1$, which we suppose are all started from 0 and built on the same probability space in such a way that \eqref{coupling} holds almost-surely on $\mathcal{B}_i$. For example, this means that 
\[\mathbb{P}((X^{i,j})_{j=1,2,3}\in\cdot):=\int P^{\mathcal{G}_{N_i}}_{0,0,0}\left(\cdot \right)P(dS).\]

Now, if we define $H_i'$ by setting
\[H_i'\coloneqq \sum_{n=\Theta_{\eta N_{i-1}}+1}^{(\Theta_{\eta N_{i-1}}+c_2N_i^2\Psi(N_i))\wedge \Theta_{\eta N_{i}}}\mathbbm{1}_{\{X^{i,1}_n=X^{i,2}_n=X^{i,3}_n\in A_{N_i}\}},\]
with the stopping times defined in terms of $(X^{i,j})_{j=1,2,3}$, on $\mathcal{B}_i$, it holds that 
\begin{equation}\label{hcomp}
H_i'\leq H_i.
\end{equation}
We highlight that we only have an inequality here because for $H_i'$ we are counting only the collisions that occur on the good set $A_{N_i}$. Moreover, we importantly have that the random variable $H_i$ is measurable with respect to the sigma-algebra:
\[\mathcal{H}_i:=\sigma\left(\mathcal{G}_{N_{i'}},\: \left(X^{i',j}_n\right)_{j=1,2,3,\:n\leq \Theta_{\eta N_{i'}}}:\:i'\leq i\right).\]

With these preparations in place, we are now ready to establish the two key inputs to proving Proposition \ref{hbound}, which are given by the following two lemmas. We emphasize that the argument of Lemma \ref{hdashbound} is rather novel. Although it follows the pattern of the corresponding argument in \cite{CDA}, it involves two applications of the second moment method -- one to a certain conditional mean of $H_i'$, and one to $H_i'$ itself. The reason for this is that it means we are able to weaken the control we require on the good set $A_{N_i}$ to what is provided by Proposition \ref{prop-conditional-AE}. (In particular, it is enough for the infimum to be outside the relevant probability.)

\begin{lemma}\label{bilem}
There exists $C>0$ such that 
\begin{equation}\label{lem-condi-B_i}
    P(\mathcal{B}_i\mid\mathcal{F}_{i-1})\ge 1-C(\log N_i)^{-1},
\end{equation}
holds for sufficiently large $i$, where $\mathcal{F}_i$ is as defined in \eqref{Fidef}. In particular, the event $\mathcal{B}_i$ holds for all large $i$ $P$-almost-surely. 
\end{lemma}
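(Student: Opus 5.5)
The proof reduces $\mathcal{B}_i$ to a non-return event for the walk $S$. Write $T_i:=\inf\{k\ge0:S_k\in B(0,R_i)^c\}$ for a Euclidean radius $R_i$ to be chosen. The discrepancy between $\bar{B}^{\mathcal{G}}_{\eta N_i}$ and $\bar{B}^{\mathcal{G}_{N_i}}_{\eta N_i}$ can only come from steps of $S$ after time $N_i$ that visit a vertex of $B_{\eta N_i}$, or its neighbourhood, and thereby add edges or vertices. So it suffices to show
\[\mathcal{B}_i\supseteq\left\{S[0,\eta N_i+1]\subseteq B(0,R_i)\right\}\cap\left\{S[N_i,\infty)\cap B(0,R_i+1)=\emptyset\right\}.\]
On the right-hand event every vertex of $B_{\eta N_i}$ lies in $B(0,R_i)$, every neighbour of such a vertex in $\mathcal{G}$ lies within distance one of it, and any edge $\{S_k,S_{k+1}\}$ with $k\ge N_i$ touching such a vertex is impossible. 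Hence all relevant edges and neighbours come from times before $N_i$, and they appear in $\mathcal{G}_{N_i}$ as well.

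I would take $R_i:=N_i^{1/2}(\log N_i)^{1/2}$, or any radius of the form $N_i^{1/2}\times(\text{slowly growing factor})$, and bound the two complementary events conditionally on $\mathcal{F}_{i-1}$. For the first, $S[0,N_{i-1}]\subseteq B(0,N_{i-1})$ deterministically, and $N_{i-1}\ll R_i$. By the Markov property at time $N_{i-1}$ and a standard maximal inequality (Doob, or the Gaussian tail bound \cite[Theorem 1.2.1]{Lawb}), the probability that the walk travels distance $R_i/2$ in $\eta N_i$ steps is at most $\exp(-c\log N_i)$, which is far smaller than $(\log N_i)^{-1}$. For the second, apply the Markov property at time $N_i$ and split on $|S_{N_i}|$. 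On the event $|S_{N_i}|\ge 2R_i':=2N_i^{1/2}(\log N_i)^{-1/2}$, I would instead use the smaller radius $R_i'$ in the first step, paying a Gaussian cost $e^{-c\log N_i}$; with $R_i'$ the argument is adjusted accordingly. The hitting estimate \cite[Proposition 1.5.10]{Lawb} in $d=4$ then gives
\[P_x\left(S[0,\infty)\cap B(0,R)\neq\emptyset\right)\le C(R/|x|)^{2}.\]
With $R\approx R_i'$ and $|x|\ge N_i^{1/2}(\log N_i)^{-1/4}$, this bound is $O((\log N_i)^{-1/2})$, so the radii must be tuned. The cleanest choice is $R_i=N_i^{1/2}(\log N_i)^{-1}$ for containment of $S[0,\eta N_i]$, which would fail with probability bounded away from zero. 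That shows the naive version is wrong, and the containment requirement has to be replaced by a cut-time argument, as follows.

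The corrected plan replaces Euclidean separation with a cut time, as in Lemma \ref{lem-eff-e2}. I would use \cite[Lemma 7.7.4]{Lawb}, or \cite[Theorem 4.3.6]{Lawb}, to find, with conditional probability at least $1-C(\log N_i)^{-1}$, times that separate the past from the future at the relevant scale. Concretely, define the event
\[S[0,\eta N_i+1]\cap S[N_i,\infty)=\emptyset.\]
Its complement requires the future after time $N_i$ to return to the set visited up to time $\eta N_i+1$. By \cite[Theorem 4.3.6]{Lawb} (non-intersection of two walk pieces separated by a macroscopic gap in $d=4$), this has probability $O((\log N_i)^{-1})$. On this event, together with the one-step enlargement $S[0,\eta N_i+2]\cap S[N_i-1,\infty)=\emptyset$, which is handled identically, the two neighbourhoods coincide, because $B_{\eta N_i}$ consists of the first $\eta N_i$ steps. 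The conditioning on $\mathcal{F}_{i-1}$ is handled exactly as in the proof of Proposition \ref{prop-conditional-AE}. The piece $S[0,N_{i-1}]$ lies in $B(0,N_{i-1})$ with $N_{i-1}\ll N_i^{\varepsilon}$, so the probability that $S[N_i,\infty)$ hits it is $O(N_i^{-1+\varepsilon})$ by \cite[Proposition 1.5.10]{Lawb}. The remaining intersection, between $S[N_{i-1},\eta N_i+2]$ and $S[N_i-1,\infty)$, is independent of $\mathcal{F}_{i-1}$ by the strong Markov property and translation invariance, so the unconditional $(\log N_i)^{-1}$ bound applies.

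The almost-sure statement then follows from Borel--Cantelli. Taking expectations gives $P(\mathcal{B}_i^c)\le C(\log N_i)^{-1}=Ce^{-e^{ai}}$, which is summable. I expect the main obstacle to be the precise application of the non-intersection estimate \cite[Theorem 4.3.6]{Lawb}. That estimate concerns $S[0,n]$ against $S[2n,\infty)$, whereas here the gap ratio is $N_i/(\eta N_i)=1/\eta$. One needs its constant to be uniform in such ratios, or else to chain a bounded number of scale-$n$ blocks through a union bound over $O(1/\eta)$ pieces, each contributing $O((\log N_i)^{-1})$.
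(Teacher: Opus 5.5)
Your final plan is essentially the paper's proof. It reduces $\mathcal{B}_i$ to $\{S[0,\eta N_i]\cap S[N_i,\infty)=\emptyset\}$, controls $S[0,N_{i-1}]\subseteq B(0,N_{i-1})$ by splitting on the displacement at time $N_i$ and using the hitting estimate \cite[Proposition 1.5.10]{Lawb}, treats $S[N_{i-1},\eta N_i]$ via the Markov property at $N_{i-1}$ and \cite[Theorem 4.3.6]{Lawb}, and concludes by Borel--Cantelli since $(\log N_i)^{-1}=e^{-e^{ai}}$ is summable. Your closing worry is moot, because the paper time-reverses to the form $S^1[(1-\eta)N_i,N_i-N_{i-1}]\cap S^2[0,\infty)$ and applies the theorem with a constant depending on the fixed $\eta$, which the lemma allows.
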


\begin{proof}
    Note that $\mathcal{B}_i$ holds if 
\[    S[0,\eta N_i]\cap S[N_i,\infty)=\emptyset.\]
It follows from the strong Markov property that 
\begin{equation*}
    P(S[N_{i-1},\eta N_i]\cap S[N_i,\infty)\neq\emptyset\mid\mathcal{F}_{i-1})
        =P(S[0,\eta N_i-N_{i-1}]\cap S[N_i-N_{i-1}]\neq\emptyset),
\end{equation*}
which further leads to, by the Markov property and reversibility, 
\begin{align*}
    P(S[0,\eta N_i-N_{i-1}]&\cap S[N_i-N_{i-1},\infty)\neq\emptyset)    \\
        &=P(S^1[(1-\eta)N_i,N_i-N_{i-1}]\cap S^2[0,\infty)\neq\emptyset),
\end{align*}
where $S^1$ and $S^2$ are independent simple random walks and $P$ on the right-hand side is the joint law conditioned that $S^1(0)=S^2(0)=0$. 
By \cite[Theorem 4.3.6]{Lawb}, there exists a universal constant $C>0$ such that 
\[    P(S^1[(1-\eta)N_i,N_i-N_{i-1}]\cap S^2[0,\infty)\neq\emptyset)\le C(\log N_i)^{-1},\]
for sufficiently large $i$ (recall the definition of $N_i$). In conclusion, we obtain 
\begin{equation}\label{ineq-BNi-1st}
    P(S[N_{i-1},\eta N_i]\cap S[N_i,\infty)\neq\emptyset\mid\mathcal{F}_{i-1})\le C(\log N_i)^{-1},
\end{equation}

To handle the intersection probability of $S[0,N_{i-1}]$ and $S[N_i,\infty)$, note that 
\[    S[0,N_{i-1}]\subset B(S(N_{i-1}),N_{i-1})\]
is trivial. Let $R_i\ge 1$, which is fixed later. Now we have  
\begin{align}
    P(&S[0,N_{i-1}]\cap S[N_i,\infty)\neq\emptyset\mid\mathcal{F}_{i-1})    \notag    \\
    &\le P(B(S(N_{i-1}),N_{i-1})\cap S[N_i,\infty)\neq\emptyset\mid\mathcal{F}_{i-1})   \notag\\
    &= P(B(0,N_{i-1})\cap S[N_i-N_{i-1},\infty)\neq\emptyset)   \notag\\
    &\le P(B(0,N_{i-1})\cap S[N_i-N_{i-1},\infty)\neq\emptyset,S(N_i-N_{i-1})\not \in B(R_i))    \notag\\
    &\qquad +P(S(N_i-N_{i-1})\in B(R_i)) \notag\\
    &\le \sup_{x\in \mathbb{Z}^4\setminus B(R_i)}P^x(B(0,N_{i-1})\cap S[0,\infty)\neq\infty)+P(S(N_i-N_{i-1})\in B(R_i))
    \label{ineq-BNi-intersect},
\end{align}
where we applied the strong Markov property in the equality and the Markov property in the latter inequality, respectively. 
By \cite[Theorem 1.2.1]{Lawb}, the second term is bounded above as 
\[    P(S(N_i-N_{i-1})\in B(R_i))\le R_i^4(N_i-N_{i-1})^{-2}.\]
By \cite[Theorem 1.5.10]{Lawb}, the first term on the right-hand side of \eqref{ineq-BNi-intersect} is bounded above as the following: 
\begin{equation*}
    \sup_{x\in \mathbb{Z}^4\setminus B(R_i)}P^x(B(0,N_{i-1})\cap S[0,\infty)\neq\infty)\le CR_i^{-2}N_{i-1}^2\le R_i^{-2}N_i^{\frac{(1-2\varepsilon)^2}{4}}.
\end{equation*}
Thus, taking $R_i=(N_i-N_{i-1})^{\frac{5}{12}}$, we obtain 
\[ P(S[0,N_{i-1}]\cap S[N_i,\infty)\neq\emptyset\mid\mathcal{F}_{i-1})\le N_i^{-\frac{1}{3}}.\]
Combining this\ with \eqref{ineq-BNi-1st}, we obtain \eqref{lem-condi-B_i}. 

The second statement follows from a rather straightforward application of \cite[Theorem 4.3.6]{Lawb}. This theorem implies 
\[    P(\mathcal{B}_i^c)\le P(S[0,\eta N_i]\cap S[N_i,\infty)\neq\emptyset)\le C(\log N_i)^{-1}.\]
From \eqref{nidef} it follows 
\begin{align*}
    \sum_iP(\mathcal{B}_i^c)\le \sum_i C4^{-i}<\infty,
\end{align*}
and the Borel-Cantelli lemma yields the desired statement. 
\end{proof}

\begin{proposition}\label{prop-B-tilde-i}
Let 
\[    \tilde{\mathcal{B}}_i\coloneqq \mathcal{B}_i\cap{\tilde{\mathcal{E}}}_{N'_i}\cap \{0\in A_{N_i}\},\]
where
\begin{equation}\label{nidashdef}
    N_i':=\eta N_i/C,
\end{equation}
with $C$ being the constant of Proposition \ref{hklower}. Then for sufficiently large $i$, 
\[    \inf_{m\in[N_i^{1-\varepsilon},N_i]}P\left(\{|A_{N_i}\cap B_m|\ge p'm\}\cap \tilde{\mathcal{B}}_{N_i}\mathrel{}\middle|\mathrel{} \mathcal{F}_{i-1}\right)\ge p'/2,\quad\mbox{a.s.}\]
\end{proposition}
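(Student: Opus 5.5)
The plan is to reduce this to Proposition \ref{prop-conditional-AE} and Lemma \ref{bilem} by a union bound on the conditional probabilities. I read $\tilde{\mathcal{B}}_{N_i}$ as $\tilde{\mathcal{B}}_i$. For any $m\in[N_i^{1-\varepsilon},N_i]$ we have
\[
P\left(\{|A_{N_i}\cap B_m|\ge p'm\}\cap\tilde{\mathcal{B}}_i\mid\mathcal{F}_{i-1}\right)\ge P\left(\{|A_{N_i}\cap B_m|\ge p'm\}\cap\tilde{\mathcal{E}}_{N_i'}\mid\mathcal{F}_{i-1}\right)-P(\mathcal{B}_i^c\mid\mathcal{F}_{i-1})-P(0\notin A_{N_i}\mid\mathcal{F}_{i-1}).
\]
By Lemma \ref{bilem}, the middle term is $O((\log N_i)^{-1})$ almost surely. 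It therefore remains to handle the first and last terms.

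\textbf{First term.} Proposition \ref{prop-conditional-AE} is stated for $\tilde{\mathcal{E}}_{N_i}$, not $\tilde{\mathcal{E}}_{N_i'}$ with $N_i'=\eta N_i/C$. I would not try to deduce one event from the other. Instead I would rerun the proof of Proposition \ref{prop-conditional-AE} with $\tilde{\mathcal{E}}_{N_i'}$ in place of $\tilde{\mathcal{E}}_{N_i}$. Since $N_i'$ is a fixed constant multiple of $N_i$, all the scales $n\in[N_i'^{1-\varepsilon},N_i']$ still satisfy $N_{i-1}\ll n^{\varepsilon/2}$. Hence the decoupling from $\mathcal{F}_{i-1}$ in that proof goes through verbatim. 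This covers the bounds $R(0,x)\le N_{i-1}$, the comparison $R(0,\cdot)$ versus $R(x_i,\cdot)$ via the killed Green kernel, and the non-intersection estimate. The union bound over dyadic scales from Proposition \ref{prop-tilde-EN} then gives $P(\tilde{\mathcal{E}}_{N_i'}^c\mid\mathcal{F}_{i-1})\le\delta$, with $\delta$ as small as we like by choosing $\varepsilon$.

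Combining this with the conditional volume bound $P(|A_{N_i}\cap B_m|\ge p'm\mid\mathcal{F}_{i-1})\ge p'(1-(\log N_i)^{-1/20})$ from that proof, and choosing $\delta$ small relative to $p'$ (say $\delta=p'/8$), gives a lower bound of at least $3p'/4$ for large $i$. We may need to shrink $p'$ slightly, exactly as in Proposition \ref{prop-conditional-AE}.

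\textbf{Last term.} I would show $P(0\notin A_{N_i}\mid\mathcal{F}_{i-1})\to0$ uniformly. Membership $0\in A_{N_i}$ requires volume lower bounds for resistance balls around $0$ at all scales $r\in(e^{(\log N_i)^{1/2}}/N_i,1]$. The smallest such scale is $e^{(\log N_i)^{1/2}}$, which is far larger than $N_{i-1}=\exp\exp\exp(a(i-1))$, since $(\log N_i)^{1/2}=\exp(\exp(ai)/2)\gg\exp(\exp(a(i-1)))$.

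So, as in Proposition \ref{prop-conditional-AE}, I would use $R(0,y)\le R(0,S(N_{i-1}))+R_{\mathcal{G}_{N_{i-1},\infty}}(S(N_{i-1}),y)\le N_{i-1}+R_{\mathcal{G}_{N_{i-1},\infty}}(S(N_{i-1}),y)$. Together with monotonicity of resistance (adding the piece $S[0,N_{i-1}]$ only decreases resistances), this gives
\[
V_R^{(N_i)}(0,r)\ge\left|\{y\in S[N_{i-1},N_i]:R_{\mathcal{G}_{N_{i-1},\infty}}(S(N_{i-1}),y)\le r-N_{i-1}\}\right|.
\]
Since $N_{i-1}=o(rN_i\Psi(rN_i))$ throughout the range, the shift by $N_{i-1}$ can be absorbed by slightly lowering the constant. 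The right-hand side is independent of $\mathcal{F}_{i-1}$ by the strong Markov property. Applying Lemma \ref{lem-vol-low-1} with $m=0$ to the walk started at $S(N_{i-1})$, over horizon $N_i-N_{i-1}\ge N_i/2$, then gives $P(0\notin A_{N_i}\mid\mathcal{F}_{i-1})=O((\log N_i)^{-1/20})$.

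\textbf{Conclusion and main obstacle.} Summing the three contributions gives at least $3p'/4-O((\log N_i)^{-1/20})\ge p'/2$ for large $i$, uniformly in $m$ and almost surely. The step I expect to be most delicate is the conditional control of $\tilde{\mathcal{E}}_{N_i'}$, specifically the lower resistance bound \eqref{e2} at scale $N_i'$. The difficulty is checking that the Green-kernel comparison and the intersection estimates of Proposition \ref{prop-conditional-AE} tolerate the constant-factor change of scale while keeping error terms summable over the $O(\varepsilon\log N_i)$ dyadic scales.
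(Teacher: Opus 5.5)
Your proposal is correct and is essentially the paper's argument: the paper's proof is exactly this union bound over Proposition~\ref{prop-conditional-AE}, Lemma~\ref{bilem} and Lemma~\ref{lem-vol-low-1}, and your two refinements fill in details the paper leaves implicit. Those refinements are rerunning Proposition~\ref{prop-conditional-AE} at scale $N_i'$, and decoupling $\{0\in A_{N_i}\}$ from $\mathcal{F}_{i-1}$, which is needed because Lemma~\ref{lem-vol-low-1} is only unconditional. One small fix: since $A_{N_i}$ is defined via $R_{\mathcal{G}_{N_i}}$, compare with $R_{\mathcal{G}_{N_{i-1},N_i}}(S(N_{i-1}),y)$, which dominates $R_{\mathcal{G}_{N_i}}(S(N_{i-1}),y)$ by monotonicity and is equally independent of $\mathcal{F}_{i-1}$, rather than with $R_{\mathcal{G}_{N_{i-1},\infty}}$, because $\mathcal{G}_{N_{i-1},\infty}$ is not a subgraph of $\mathcal{G}_{N_i}$.
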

\begin{proof}
    The statement follows from Lemma \ref{lem-vol-low-1}, Proposition \ref{prop-conditional-AE} and Lemma \ref{bilem}. 
\end{proof}

\begin{lemma}\label{hdashbound}
There exists a deterministic constant $c$ such that, $\mathbb{P}$-almost-surely, it holds that 
    \[\mathbb{P}\left(H'_i\geq c\left(\log N_i\right)^{1/2}\:\vline\:\mathcal{H}_{i-1}\right)\geq c.\]
    \end{lemma}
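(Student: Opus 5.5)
The plan is a conditional second moment argument, applied on the event supplied by Proposition~\ref{prop-B-tilde-i}; this is the ``double Paley--Zygmund'' structure mentioned before Lemma~\ref{bilem}.

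\textbf{Conditioning and the good event.} Condition on $\mathcal{H}_{i-1}$. This fixes $\mathcal{G}_{N_{i-1}}$ and the starting positions $y_j:=X^{i,j}_{\Theta_{\eta N_{i-1}}}$, $j=1,2,3$, which lie in $\bar B^{\mathcal{G}}_{\eta N_{i-1}}$. By the Markov property, $H_i'$ then counts the triple collisions on $A_{N_i}$ of three independent walks on $\mathcal{G}_{N_i}$ started from $(y_1,y_2,y_3)$. The count runs up to time $c_2N_i^2\Psi(N_i)$ and is truncated at the exit from $B_{\eta N_i}=B_{CN_i'}$.

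The conditional law of $\mathcal{G}_{N_i}$ given $\mathcal{H}_{i-1}$ only involves $\mathcal{F}_{i-1}$. We may therefore invoke Proposition~\ref{prop-B-tilde-i}: with conditional probability at least $p'/2$, the event $\tilde{\mathcal{B}}_i$ holds together with $|A_{N_i}\cap B_m|\ge p'm$ for a suitable $m$. Strictly, the proposition controls each fixed $m$ separately, so it is applied at a single scale, for instance $m\asymp N_i'$ up to constants.

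On this event the rest of the argument is quenched. The tower property then gives
\[
\mathbb{P}\left(H_i'\ge c(\log N_i)^{1/2}\mid\mathcal{H}_{i-1}\right)\ge \tfrac{p'}{2}\,q,
\]
where $q$ is a quenched Paley--Zygmund lower bound.

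\textbf{First moment.} On the good event, write $M:=E(H_i')$ for the conditional mean. Restricting to times $n\in[N_i'^{2-\varepsilon},\tilde c_3N_i'^2\Psi(N_i')]$, we have
\[
M\ge\sum_n\sum_{x\in A_{N_i}\cap B_{\tilde c_2\sqrt{n/\Psi(n)}}}\prod_{j=1}^3 p_n^{B_{CN_i'}}(y_j,x)\deg(x).
\]
Proposition~\ref{hklower}, applied with $N=N_i'$, bounds each factor below by $\tilde c_1\sqrt{\Psi(n)/n}$. This needs $y_j\in B_{\tilde c_0\sqrt{n\Psi(n)}}$, which holds because $N_{i-1}\ll N_i^{\varepsilon}$.

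Parity is not an obstruction. The graph is bipartite as a subgraph of $\mathbb{Z}^4$, and the three walks start at $0$ and move synchronously, so the $y_j$ share a parity class. Hence for every $n$, roughly half the $x$ (in the appropriate class) satisfy all three parity conditions simultaneously.

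Combining this with the volume bound $|A_{N_i}\cap B_m|\ge p'm$ at $m\asymp\sqrt{n/\Psi(n)}$ gives
\[
M\ge c\sum_n(\Psi(n)/n)^{3/2}(n/\Psi(n))^{1/2}=c\sum_n\Psi(n)/n\ge c\,\varepsilon(\log N_i)^{1/2}.
\]
Only $m\asymp N_i'$ comes directly from Proposition~\ref{prop-B-tilde-i}, and the volume lower bound needs $m$ ranging over $\sqrt{n/\Psi(n)}$. The cleanest fix I see is to run the time window only over $n$ with $\sqrt{n/\Psi(n)}$ comparable to a few dyadic scales near $N_i'$. Each such scale contributes order $\Psi\asymp(\log N_i)^{-1/2}$, which is not enough. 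So I would instead use Proposition~\ref{prop-B-tilde-i} together with a union/averaging over the $O(\varepsilon\log N_i)$ dyadic scales, passing to expectations via the first Paley--Zygmund step. Concretely, apply Paley--Zygmund to the random variable $M$ itself, whose mean is $\ge c(\log N_i)^{1/2}$ by averaging the volume statement over $m$, and whose second moment is $\le C\log N_i$ by the deterministic upper bounds.

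\textbf{Second moment.} Apply the Markov property at each collision time $n$ at a point $x\in A_{N_i}$. This gives
\[
E(H_i'^2)\le M\Big(1+2\sup_{x\in A_{N_i}}\sum_{k\ge1}\sum_{x'\in A_{N_i}}p_k(x,x')^3\deg(x')^3\Big).
\]
Each inner sum is at most $C\sup_{x'\in A_{N_i}}p_k(x,x')^2\le Cp_k(x,x)p_k(x',x')$, by Cauchy--Schwarz with even times. Proposition~\ref{hkupper} then bounds this by $C/k$ for $k\le c_1\exp(3\log^{1/2}N_i)$ and by $C\Psi(k)/k$ for larger $k$ up to $c_2N_i^2\Psi(N_i)$. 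Both ranges sum to $O((\log N_i)^{1/2})$, so
\[
E(H_i'^2)\le CM(1+(\log N_i)^{1/2})\le C'M^2 .
\]
Paley--Zygmund then gives $P(H_i'\ge M/2)\ge c$, which completes the argument.

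\textbf{Main obstacle.} The main obstacle is the step above where volume control on $A_{N_i}\cap B_m$ is needed across all scales $m$, while Proposition~\ref{prop-B-tilde-i} only provides it with positive probability for each fixed $m$. My plan is to handle this by the first (outer) second moment argument on the conditional mean $M$ as a function of the graph, which is exactly why the infimum sits outside the probability in Proposition~\ref{prop-B-tilde-i}.
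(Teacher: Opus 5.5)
Your final plan is essentially the paper's proof: an outer Paley--Zygmund step on the quenched mean $E_i\mathbf{1}_{\tilde{\mathcal{B}}_i}$, followed by an inner Paley--Zygmund step for $H_i'$ given the graph, via the Markov property at collision times. In the outer step, the upper bound $C(\log N_i)^{1/2}$ comes from Proposition \ref{hkupper} (which needs $0\in A_{N_i}$), and the lower bound on its conditional mean comes by linearity over $n$ from the fixed-$m$ estimate of Proposition \ref{prop-B-tilde-i}. You should discard the opening single-scale ``$\tfrac{p'}{2}q$'' tower bound, which your own obstacle analysis shows cannot work, and apply the outer step to $M\mathbf{1}_{\tilde{\mathcal{B}}_i}$ rather than to $M$, since the bound $M\le C(\log N_i)^{1/2}$ only holds on $\tilde{\mathcal{B}}_i$.
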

\begin{proof}
Writing $\mathbb{E}$ for the expectation associated with $\mathbb{P}$, define
\[E_i:=\mathbb{E}\left(H_i'\:\vline\:\mathcal{H}_{i-1},\:S\right)\]
for the conditional expectation of $H_i'$ given the random walk path $S$, and the walks at the previous level. By definition, we then have that
\begin{eqnarray}
    E_i&=& \mathbb{E}\left(\sum_{n=\Theta_{\eta N_{i-1}}+1}^{(\Theta_{\eta N_{i-1}+1}+c_2N_i^2\Psi(N_i))\wedge \Theta_{\eta N_{i}}}\mathbbm{1}_{\{X^{i,1}_n=X^{i,2}_n=X^{i,3}_n\in A_{N_i}\}}\:\vline\:\mathcal{H}_{i-1},\:S\right)\nonumber\\
    &\leq & \sum_{n=1}^{c_2N_i^2\Psi(N_i)}\sup_{(x_1,x_2,x_3)\in B_{\eta N_{i-1}}}\sum_{y\in A_{N_i}}P^{\mathcal{G}_{N_i}}_{(x_1,x_2,x_3)}\left(X^{i,1}_n=X^{i,2}_n=X^{i,3}_n=y,\:n\leq \Theta_{\eta N_i}\right)\nonumber\\
    &\leq& C\sum_{n=1}^{c_2N_i^2\Psi(N_i)}\sup_{(x_1,x_2,x_3)\in B_{\eta N_{i-1}}}\sum_{y\in A_{N_i}\cap B_{\eta N_i}}p_n^{\mathcal{G}_{N_i},B_{\eta N_i}}(x_1,y)p_n^{\mathcal{G}_{N_i},B_{\eta N_i}}(x_2,y)p_n^{\mathcal{G}_{N_i},B_{\eta N_i}}(x_3,y)\nonumber\\
    &\leq & C\sum_{n=1}^{c_2N_i^2\Psi(N_i)}\sup_{x\in B_{\eta N_{i-1}}}\sup_{y\in A_{N_i}\cap B_{\eta N_i}}p_n^{\mathcal{G}_{N_i},B_{\eta N_i}}(x,y)^2,\nonumber
    \end{eqnarray}
where we write $p_n^{\mathcal{G}_{N_i},B_{\eta N_i}}$ for the transition density of $X^{i,j}$, killed on exiting $B_{\eta N_i}$. Clearly, on $\mathcal{B}_i$, for the relevant $x$ and $y$, we have that
\[p_n^{\mathcal{G}_{N_i},B_{\eta N_i}}(x,y)=p_n^{B_{\eta N_i}}(x,y)\leq p_n(x,y)\leq \sqrt{p_{2\lfloor n/2\rfloor}(x,x)p_{2\lceil n/2\rceil}(y,y)},\]
where the second expression is the corresponding quantity for $X$ killed on exiting $B_{\eta N_i}$, the third expression is the heat kernel for the unkilled version of $X$, and the final inequality is obtained by applying Cauchy-Schwarz. Applying the heat kernel bounds of Proposition \ref{hkupper}, it follows that, on $\tilde{\mathcal{B}}_i$,
\begin{equation}
E_i\leq C\left(\sum_{n=1}^{c_1\exp(3\log^{1/2}(N_i))}n^{-1}+\sum_{n=c_1\exp(3\log^{1/2}(N_i))}^{c_2N_i^2\Psi(N_i)}{\frac{\Psi(n)}{n}}\right)\leq C\log^{1/2}(N_i),\label{dede}
\end{equation}
(Note that we applied here the fact that
\[\eta N_{i-1}\leq c_0\left(c_1\exp(3\log^{1/2}(N_i))/\Psi\left(c_1\exp(3\log^{1/2}(N_i))\right)\right)^{1/2},\]
and so the bounds of the second part of Proposition \ref{hkupper} apply to all $y\in B_{\eta N_{i-1}}$ for all relevant times.) Hence
\[\mathbb{E}\left(E_i^2\mathbf{1}_{\tilde{\mathcal{B}}_i}\:\vline\:\mathcal{H}_{i-1}\right)\leq C\log^{1/2}(N_i)\mathbb{E}\left(E_i\mathbf{1}_{\tilde{\mathcal{B}}_i}\:\vline\:\mathcal{H}_{i-1}\right),\]
and so, by the Paley-Zygmund inequality,
\begin{equation}\label{pz}
   \mathbb{P}\left(E_i\mathbf{1}_{\tilde{\mathcal{B}}_i}\geq \frac12 \mathbb{E}(E_i\mathbf{1}_{\tilde{\mathcal{B}}_i}\:|\:\mathcal{H}_{i-1})\:\vline\:\mathcal{H}_{i-1}\right)\geq \frac{\mathbb{E}(E_i\mathbf{1}_{\tilde{\mathcal{B}}_i}\:\vline\:\mathcal{H}_{i-1})^2}{4\mathbb{E}(E_i^2\mathbf{1}_{\tilde{\mathcal{B}}_i}\:\vline\:\mathcal{H}_{i-1})}\geq \frac{\mathbb{E}(E_i\mathbf{1}_{\tilde{\mathcal{B}}_i}\:\vline\:\mathcal{H}_{i-1})}{C\log^{1/2}(N_i)}. 
\end{equation}
It moreover holds that
\begin{eqnarray*}
    \lefteqn{\mathbb{E}(E_i\mathbf{1}_{\tilde{\mathcal{B}}_i}\:\vline\:\mathcal{H}_{i-1})}\\
    &=& \sum_{n=1}^{c_2N_i^2\Psi(N_i)}\mathbb{P}\left(X^{i,1}_{\Theta_{\eta N_{i-1}}+n}=X^{i,2}_{\Theta_{\eta N_{i-1}}+n}=X^{i,3}_{\Theta_{\eta N_{i-1}}+n}\in A_{N_i},\:\Theta_{\eta N_{i-1}}+n\leq \Theta_{\eta N_i},\:\tilde{\mathcal{B}}_i\:\vline\:\mathcal{H}_{i-1}\right)\\
&\geq  & \sum_{n=(N_i')^{2-\varepsilon}}^{\min\{c_2,\tilde{c}_3\}(N_i')^2\Psi(N_i')}\mathbb{E}\left(\mathbf{1}_{\tilde{\mathcal{B}}_i}\inf_{(x_1,x_2,x_3)\in B_{\eta N_{i-1}}}\sum_{y\in A_{N_i}\cap B_{\tilde{c}_2\sqrt{n/\Psi(n)}}}\prod_{j=1}^3p_n^{\mathcal{G}_{N_i},B_{CN_i'}}(x_j,y)\:\vline\:\mathcal{H}_{i-1}\right),
\end{eqnarray*}
where $\tilde{c}_2$, $\tilde{c}_3$ are the constants of Proposition \ref{hklower} and we have applied the fact that $B_{\eta N_i}=B_{CN_i'}$ (which is a consequence of the choice of $N_i'$ at \eqref{nidashdef}). Since we have that
\[\eta N_{i-1}\leq \tilde{c}_0\sqrt {(N_i')^{2-\varepsilon}\Psi\left((N_i')^{2-\varepsilon}\right)},\]
we can consequently apply the heat kernel lower bound of Proposition \ref{hklower}  (noting that $p_n^{\mathcal{G}_{N_i},B_{CN_i'}}=p_n^{\mathcal{G}_{N_i},B_{\eta N_i}}=p_n^{B_{\eta N_i}}$ on $\tilde{\mathcal{B}}_i$) to deduce that
\begin{eqnarray*}
\lefteqn{ \mathbb{E}(E_i\mathbf{1}_{\tilde{\mathcal{B}}_i}\:\vline\:\mathcal{H}_{i-1})}\\&\geq & C\sum_{n=(N_i')^{2-\varepsilon}}^{\min\{c_2,\tilde{c}_3\}(N_i')^2\Psi(N_i')}\mathbb{E}\left(\mathbf{1}_{\tilde{\mathcal{B}}_i}\left|A_{N_i}\cap B_{\tilde{c}_2\sqrt{n/\Psi(n)}}\right|\frac{\Psi(n)^{3/2}}{n^{3/2}}\:\vline\:\mathcal{H}_{i-1}\right)\\
&\geq & C\sum_{n=(N_i')^{2-\varepsilon}}^{\min\{c_2,\tilde{c}_3\}(N_i')^2\Psi(N_i')}\mathbb{P}\left(\left|A_{N_i}\cap B_{\tilde{c}_2\sqrt{n/\Psi(n)}}\right|\geq c\sqrt{\frac{n}{\Psi(n)}},\:\tilde{\mathcal{B}}_i\:\vline\:\mathcal{H}_{i-1}\right)\frac{\Psi(n)}{n}\\
&\geq&  C\sum_{n=(N_i')^{2-\varepsilon}}^{\min\{c_2,\tilde{c}_3\}(N_i')^2\Psi(N_i')}\frac{\Psi(n)}{n}\\
&\geq &C\log^{1/2}(N_i),
\end{eqnarray*}
where to obtain the penultimate inequality, we have applied Proposition \ref{prop-B-tilde-i}.

Returning to \eqref{pz}, we consequently deduce that
\begin{equation}\label{claim1}
    \mathbb{P}\left(E_i\geq c\log^{1/2}(N_i),\:\tilde{\mathcal{B}}_i\:\vline\:\mathcal{H}_{i-1}\right)= \mathbb{P}\left(E_i\mathbf{1}_{\tilde{\mathcal{B}}_i}\geq c\log^{1/2}(N_i)\:\vline\:\mathcal{H}_{i-1}\right)\geq c',
\end{equation}
which establishes (a stronger version of) the claim for the conditional means of $H_i'$.

We next apply the Paley-Zymund inequality to $H_i'$ itself, conditioning on $\mathcal{H}_{i-1}$ and $S$, to give 
\[ \mathbb{P}\left(H_i'\geq \frac12 E_i\:\vline\:\mathcal{H}_{i-1},\:S\right)\geq \frac{E_i^2}{4\mathbb{E}((H_i')^2\:\vline\:\mathcal{H}_{i-1},\:S)}.\]
Towards estimating the second moment here, we introduce the notation
\[\mathcal{C}_n^i:=\left\{X^{i,1}_{n}=X^{i,2}_{n}=X^{i,3}_{n}\in A_{N_i},\:n\leq \Theta_{\eta N_i}\right\}.\]
We then have that, on $\mathcal{B}_i$,
\begin{eqnarray*}
  \lefteqn{\mathbb{E}((H_i')^2\:|\:\mathcal{H}_{i-1},\:S)}\\
  &=&E_i
+2\sum_{n=\Theta_{\eta N_{i-1}}}^{\Theta_{\eta N_{i-1}}+c_2N_i^2\Psi(N_i)}\sum_{m=n+1}^{{\Theta_{\eta N_{i-1}}+c_2N_i^2\Psi(N_i)}}\mathbb{P}\left(\mathcal{C}^i_n\cap\mathcal{C}^i_m\:\vline\:\mathcal{H}_{i-1},\:S\right)\\  
&\leq & 
E_i
+2\sum_{n=\Theta_{\eta N_{i-1}}}^{\Theta_{\eta N_{i-1}}+c_2N_i^2\Psi(N_i)}\mathbb{P}\left(\mathcal{C}^i_n\:\vline\:\mathcal{H}_{i-1},\:S\right)\sum_{m=1}^{c_2N_i^2\Psi(N_i)}\sup_{x\in A_{N_i}\cap B_{\eta N_i}}\sum_{y\in  A_{N_i}\cap B_{\eta N_i}} p_m^{\mathcal{G}_{N_i},B_{\eta N_i}}(x,y)^3\\
&\leq &E_i
+2\sum_{n=\Theta_{\eta N_{i-1}}}^{\Theta_{\eta N_{i-1}}+c_2N_i^2\Psi(N_i)}\mathbb{P}\left(\mathcal{C}^i_n\:\vline\:\mathcal{H}_{i-1},\:S\right)\sum_{m=1}^{c_2N_i^2\Psi(N_i)} \sup_{x\in A_{N_i}\cap B_{\eta N_i}}p_m(x,x)^2\\
&\leq &E_i\left(1+2C\log^{1/2}(N_i)\right),
\end{eqnarray*}
where to deduce the first inequality, we apply the Markov property at time $n$, and for the final one, we apply the heat kernel estimates of Proposition \ref{hkupper}, similarly to the estimate at \eqref{dede}. From this bound, it follows that 
\begin{equation}\label{claim2}
\mathbb{P}\left(H_i'\geq \frac12 E_i\:\vline\:\mathcal{H}_{i-1},\:S\right)\geq \frac{E_i}{C\log^{1/2}(N_i)}.
\end{equation}

Finally, putting together \eqref{claim1} and \eqref{claim2} yields
\begin{eqnarray*}
\mathbb{P}\left(H_i'\geq c\log^{1/2}(N_i)\:\vline\:\mathcal{H}_{i-1}\right)&\geq& \mathbb{P}\left(H_i'\geq \frac12 E_i\geq c\log^{1/2}(N_i)\:\vline\:\mathcal{H}_{i-1}\right)\\
&\geq& \mathbb{E}\left(\mathbb{P}\left(H_i'\geq \frac12 E_i\:\vline\:\mathcal{H}_{i-1},\:S\right)\mathbf{1}_{\{E_i\geq c\log^{1/2}(N_i)\}}\:\vline\:\mathcal{H}_{i-1}\right)\\
&\geq & c'\mathbb{P}\left(E_i\geq c\log^{1/2}(N_i)\:\vline\:\mathcal{H}_{i-1}\right)\\
&\geq &c'',
\end{eqnarray*}
as required.
\end{proof}

We complete the section by combining the two previous results to give the main conclusion.

\begin{proof}[Proof of Proposition \ref{hbound}]
From Lemma \ref{hdashbound}, we have that, $\mathbb{P}$-almost-surely,
\[\sum_{i=1}^\infty \mathbb{P}\left(H'_i\geq c\left(\log N_i\right)^{1/2}\:\vline\:\mathcal{H}_{i-1}\right)=\infty.\]
Hence, since the sigma-algebras $\mathcal{H}_i$, 
$i\geq 1$, are increasing, and $H_i'$ is $\mathcal{H}_i$-measurable, we obtain from the second Borel-Cantelli lemma that $H'_i\geq c\left(\log N_i\right)^{1/2}$ for infinitely many $i$, $\mathbb{P}$-almost-surely. Combining this with Lemma \ref{bilem} and \eqref{hcomp}, we obtain the desired conclusion.    
\end{proof}

\section{Collisions of two independent random walks}\label{2coll}

The aim of this section is to prove Theorem \ref{mr2}, which provides a scaling limit for the collisions of two continuous-time random walks on $\mathcal{G}$. To this end, we will appeal to the general convergence result of \cite[Theorem 9.13]{Noda}. As noted in the introduction, this is the reason for considering continuous-time random walks rather than discrete-time ones. As a key ingredient for this, we have the convergence of the underlying spaces from \cite{CS}. The remaining condition that we need to check is the volume lower bound of \cite[Assumption 9.12(iv)]{Noda}, which concerns the volumes of balls on scales smaller than the typical scaling needed to establish convergence of random walks; this is achieved in Proposition \ref{lem-noda's condition} below.

Let us start by introducing some relevant notation. Firstly, for $n\in \mathbb{N}$ let 
\[\mathcal{X}_n:=\left(V(\mathcal{G}),\frac{R_{\mathcal{G}}}{n\Psi(n)},\frac{\mu^V_\mathcal{G}}{c_Vn},0\right),\]
be the metric space $(V(\mathcal{G}),\frac{R_{\mathcal{G}}}{n\Psi(n)})$ equipped with the normalised counting measure (in particular, $c_V$ is a constant, as determined by Proposition \ref{xnconv} below, and $\mu^V_\mathcal{G}(\{x\})=1$ for $x\in V(\mathcal{G})$ is the invariant measure of the processes $\tilde{X}^i$, $i=1,2$, as defined in the introduction), and we distinguish the origin 0 as the root of the space. We also define
\[\mathcal{X}:=\left([0,\infty),d_E,\mathcal{L},0\right),\]
where $d_E$ is the Euclidean metric on $[0,\infty)$, $\mathcal{L}$ the one-dimensional Hausdorff measure on the same space, and again distinguishing the point 0 as the root of the space. (We abuse notation by using the same symbol for the origin of both $\mathbb{R}^4$ and $\mathbb{R}$.)

By a minor adaptation of \cite[Proposition 3.1]{CS}, we have the following. The result gives the convergence in probability of $\mathcal{X}_n$ to $\mathcal{X}$ in the Gromov-Hausdorff vague topology. Since the latter concept is by now a standard notion of convergence for metric-measure spaces, we do not give a precise definition here, but simply refer the reader to \cite{ALWgap, Khezeli, Nodatop} for background.

\begin{proposition}[{cf.\ \cite[Proposition 3.1]{CS}}]\label{xnconv} There exists a deterministic constant $c_V\in(0,\infty)$ such that 
\[\mathcal{X}_n\rightarrow\mathcal{X}\]
in probability with respect to the Gromov-Hausdorff vague topology.
\end{proposition}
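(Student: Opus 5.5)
The plan is to follow the structure of \cite[Proposition 3.1]{CS}, changing only the normalisation of the measure. There, the rescaled graph $(V(\mathcal{G}), R_\mathcal{G}/(n\Psi(n)))$ rooted at $0$ is compared with the half-line through the correspondence $x\mapsto R_\mathcal{G}(0,x)/(n\Psi(n))$. The point is that on large scales the trace looks like a chain of pieces separated by cut points, so the resistance metric is, up to small error, additive along the path. Accordingly, for each fixed radius $r$, I will construct a correspondence between the ball of radius $r$ in $\mathcal{X}_n$ and $[0,r]$, using the map $k\mapsto R_\mathcal{G}(0,S_k)/(n\Psi(n))$ on time indices $k\le Kn$. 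Gromov--Hausdorff vague convergence then reduces to three things: a vanishing distortion of this correspondence, a Prokhorov-close pushforward of the measure restricted to balls, and this holding for almost every $r$ in probability.

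For the metric part, I will use the resistance law of large numbers behind \eqref{cite-dssw-mainthm}: $R_\mathcal{G}(0,S_{tn})/(n\Psi(n))\to t$ in probability, uniformly for $t$ in compacts. This is obtained from concentration of resistance across cut times, as in \cite{exact,DS-SW-4d}, together with monotonicity in $t$ and a finite grid. From it, the map $t\mapsto R(0,S_{tn})/(n\Psi(n))$ is uniformly close to the identity. Moreover, for $s<t$ the value $R(S_{sn},S_{tn})/(n\Psi(n))$ is close to $t-s$, by additivity across a cut time lying near $sn$, whose existence is controlled by \cite[Lemma 7.7.4]{Lawb} as in the proof of Lemma \ref{lem-eff-e2}. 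Long-range intersections between pieces at different macroscopic times occur only with probability $O(1/\log n)$ per pair of scales (\cite[Theorem 4.3.6]{Lawb}). They therefore do not spoil the distortion estimate on compacts.

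For the measure part, I will use that the pushforward of counting measure on $S[0,tn]$ satisfies $|S[sn,tn]|/n\to c_V(t-s)$ in probability, where $c_V$ is the escape-probability constant from the ergodic theorem for the range in $d=4$. The reason is that the range of a transient walk is asymptotically linear with a deterministic constant. Combining this with the metric correspondence, the measure of the resistance ball of radius $r$ rescales to approximately $r$, which is Lebesgue measure on $[0,r]$. The only remaining issue is that resistance balls of radius $r$ in $\mathcal{X}_n$ might contain points $S_k$ with $k\gg rn$; this is excluded by a lower bound on $R(0,S_k)$ uniformly in $k\ge Kn$, obtained from \eqref{e2}-type estimates at scale $Kn$.

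I expect the main obstacle to be the uniformity of the metric approximation: distortion must be controlled simultaneously for all pairs of points in the ball, not just for fixed times. My first attempt will be a grid argument. Discretise time into $\delta n$ blocks, use Lemma \ref{lem-e1-e5} to bound the resistance diameter of each block by $C\delta n\Psi(n)$, and use cut points to get additivity across blocks. The fluctuations within a block are then $O(\delta)$, and letting $\delta\to0$ after $n\to\infty$ concludes the argument.
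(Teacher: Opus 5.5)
Your proposal is correct and follows essentially the paper's route. The paper cites \cite[Proposition 3.1]{CS} for the ball-wise convergence $\mathcal{X}_n^{(r)}\to\mathcal{X}^{(r)}$, which is the cut-time, resistance and range argument you sketch, and treats the switch from the degree measure to the counting measure as a line-by-line change (your range law of large numbers). It then passes to the whole space exactly as your ``almost every $r$'' reduction does, using that in the metric of \cite[(2.5)]{ALWgap} both $\mathcal{X}_n$ and $\mathcal{X}$ lie within $e^{-r}$ of their radius-$r$ restrictions; the only difference is that you re-derive the ball-wise statement instead of quoting it.
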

\begin{proof}
We only comment on the modifications from \cite{CS}. Firstly, the measure considered in \cite[Proposition 3.1]{CS} was the `degree measure', which gives each vertex a weight proportional to its degree. Changing this to the counting measure considered here can be achieved by a line-by-line replacement of the relevant arguments in \cite{CS}, and so we omit the details. Secondly, the result of \cite[Proposition 3.1]{CS} concerned the compact spaces
\[\mathcal{X}_n^{(r)}:=\left(B_R\left(0,rn\Psi(n)\right),\frac{R_{\mathcal{G}}}{n\Psi(n)},\frac{\mu^V_\mathcal{G}}{c_Vn},0\right)\]
and
\[\mathcal{X}^{(r)}:=\left([0,r],d_E,\mathcal{L},0\right)\]
(together with additional objects), giving that, for each $r>0$,
\[\mathcal{X}^{(r)}_n\rightarrow\mathcal{X}^{(r)}\]
in probability with respect to the Gromov-Hausdorff vague topology. However, it is also easy to check that, if the latter topology is metrised with respect to the metric of \cite[(2.5)]{ALWgap}, then the distance between $\mathcal{X}$ and $\mathcal{X}^{(r)}$, and that between $\mathcal{X}_n$ and $\mathcal{X}^{(r)}_n$, is bounded above by $e^{-r}$. Thus we obtain the result.
\end{proof}

We next give the additional volume bound that we require.

\begin{proposition}\label{lem-noda's condition}
There exist universal constants $C\in(0,\infty)$ and $c\in(0,1)$ such that for any fixed $\varepsilon\in(0,\frac{1}{2})$, $r\ge 1$ and $n\ge 1$, 
\begin{equation}\label{lem-2colli}
    P\left(\inf_{x\in B_R(0,rn\Psi(n))}V_R(x,\eta n\Psi(n)) \ge c\eta n \mbox{ for all }\eta \in\left((\log n)^{-1-\varepsilon},1\right]\right) \ge 1-Cr(\log n)^{-\frac{1}{2}+\varepsilon},
\end{equation}
holds.
\end{proposition}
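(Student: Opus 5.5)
We plan to reduce the statement to single-scale bounds on a dyadic grid of times, in the spirit of Lemma \ref{lem-vol-low-1}, and then take a union bound.

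\textbf{Step 1 (localisation in time).} First we localise the resistance ball. By \eqref{cite-dssw-mainthm} and Lemma \ref{lem-eff-e2}, with probability at least $1-C(\log n)^{-1}$ we have $R(0,B_{Kn}^c)\ge rn\Psi(n)$ for $K$ a large multiple of $r$. (Since $\Psi$ is slowly varying, one application at scale $Kn$ suffices.) On this event, $B_R(0,rn\Psi(n))\subseteq B_{Kn}=S[0,Kn]$. So it suffices to control every $x=S_j$ with $j\le Kn$.

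\textbf{Step 2 (single scale).} We use the grid of scales $\eta_k=2^{-k}$, $k=0,\dots,k_n$, with $2^{-k_n}\approx(\log n)^{-1-\varepsilon}$, so $k_n\le C\log\log n$. Monotonicity of volume in the radius handles intermediate $\eta$ after adjusting $c$ by a factor $2$.

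For fixed $k$, partition $[0,Kn]$ into consecutive blocks $I$ of length $\ell_k=2^{-k}n$; there are about $K2^k$ of them. Any $x$ in the range lies in some block $S(I)$. Exactly as in the proof of Lemma \ref{lem-vol-low-1}, it is enough that each block satisfies two conditions:
\begin{itemize}
\item $\max_{l,m\in I}R_{\mathcal{G}_{l,m}}(S_l,S_m)\le \ell_k\Psi(\ell_k)$;
\item $|S(I)|\ge c\,\ell_k$.
\end{itemize}
Indeed, $\ell_k\Psi(\ell_k)\le \eta_k n\Psi(n)$ up to constants by slow variation. Then every $x\in S(I)$ satisfies $V_R(x,\eta_kn\Psi(n))\ge|S(I)|\ge c\eta_kn$.

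By (2.17) of \cite{exact} and Lemma \ref{lem-vol-Bn}, together with translation invariance, each block fails with probability at most $C(\log \ell_k)^{-3/2}+C\ell_k^{-1}$. Note that $\log\ell_k\ge \tfrac12\log n$ for our range of $k$.

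\textbf{Step 3 (union bound, the main obstacle).} The union bound over blocks gives failure probability of order
\[
\sum_{k\le k_n}K2^k(\log n)^{-3/2}\le CK(\log n)^{1+\varepsilon}(\log n)^{-3/2}=Cr(\log n)^{-1/2+\varepsilon},
\]
which is exactly the claimed error. Adding the $O((\log n)^{-1})$ from Step 1 completes the argument. The $\ell_k^{-1}$ terms are negligible.

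The delicate point is whether the single-block bound $(\log\ell)^{-3/2}$ from \cite{exact} really holds uniformly for the maximal resistance over pairs within a block. It must also hold uniformly in the block position, which we get by the Markov property and translation invariance since $R_{\mathcal{G}_{l,m}}$ only depends on the increments in the block. If only a weaker $(\log\ell)^{-1}$ bound were available, the union bound would fail. In that case we would fall back on the argument of Lemma \ref{lem-eff-e2}: require at most one long-range self-intersection, and use independent sub-blocks to improve the exponent to $3/2$.
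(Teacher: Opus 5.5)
Your proposal is correct and is essentially the paper's argument: confine $B_R(0,rn\Psi(n))$ to $B_{Kn}$ with $K\asymp r$, and cover $[0,Kn]$ by dyadic blocks of length $2^{-m}n$ for $m\le M\approx(1+\varepsilon)\log_2\log n$. On each block you use the $(\log n)^{-3/2}$ bound from (2.17) of \cite{exact}, which the paper also takes directly, so your fallback is not needed, together with the block volume bound, and the union bound gives $Cr2^{M}(\log n)^{-3/2}\le Cr(\log n)^{-1/2+\varepsilon}$. The only difference is the localisation step: the paper uses a cut time in $[2rn-n(\log n)^{-6},2rn]$ plus $R(0,S(2rn))\ge\frac32 rn\Psi(n)$, with error of order $\log\log n/(\log n)^{1/2}$, whereas your use of Lemma \ref{lem-eff-e2} at scale $Kn$ gives the smaller error $C(\log n)^{-1}$; this is uniform in $r$ because one may assume $r\le(\log n)^{1/2-\varepsilon}$ (otherwise the claim is trivial), so that $\Psi(Kn)\ge\frac12\Psi(n)$.
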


\begin{proof}
Note that it suffices to prove \eqref{lem-2colli} for sufficiently large $n$ since we can extend it to all $n$ by taking $C$ large enough. 
We define events $F$ and $G$ by 
\begin{gather*}
    F=\{\exists k\in[2rn-n(\log n)^{-6},2rn],S[0,k]\cap S[k+1,\infty)=\emptyset\},
    \\ G=\left\{R(0,S(2rn))\ge \frac{3}{2}rn\Psi(n)\right\}.
\end{gather*}
By \cite[Lemma 7.7.4]{Lawb}, \cite[Lemma 2.3.1]{exact} and \cite[Theorem 1.1]{DS-SW-4d}, we have
\begin{equation}\label{2colli-FG}
    P(F\cap G)\ge 1-C\frac{\log \log n}{(\log n)^{1/2}},
\end{equation}
where $C_\theta>0$ is universal.  
Suppose $F\cap G$ occurs and take $K\in[2rn-n(\log n)^{-6},2rn]$ that satisfies $S[0,K]\cap S[K+1,\infty)=\emptyset\}$. 
From the triangle inequality, 
\[    R_\mathcal{G}(0,S(K))\ge R_\mathcal{G}(0,S(2rn))-R_\mathcal{G}(S(K),S(2rn))\ge rn\Psi(n), \]
follows since $R_\mathcal{G}(S(n),S(K))\le n(\log n)^{-6}$. 
By the definition of $K$, we also have that for all $l\ge 2rn+1$, 
\[    R_\mathcal{G}(0,S(l))=R_\mathcal{G}(0,S(K))+R_\mathcal{G}(S(K),S(l))\ge rn\Psi(n)+1.\]
This implies that 
\begin{equation}\label{lem-2colli-inclusion}
    V_R(0,rn\Psi(n))\subset B_{2rn},
\end{equation}
holds on $F\cap G$. 

Next, we prove \eqref{lem-2colli} in the case where the infimum is taken over $B_n$ and $\eta$ is dyadic. Let $\delta=1+\varepsilon$ and let 
\[    M=\left\lceil \frac{\delta}{\log 2}\log\log n\right\rceil+1.\]
Note that by \cite[Theorem 1.1]{DS-SW-4d}, $\Psi(2^{-m}n)\sim\Psi(n)$ for $m\in[1,M]$. 
Let $r$ be as fixed in the proof of \cite[Proposition 2.1.2]{DS'}. 
For $m=0,1,\cdots, M$ and $i=1,2,\cdots, 2^{m+1}r$, we define $I_i^{(m)}$ to be the event on which the following statements hold. 
\begin{gather*}
    \max_{(i-1)2^{-m}n\le k\le l\le i2^{-m}n} R_{\mathcal{G}_{k,l}}(S(k),S(l))\le 2^{-m+1}\Psi(n),
    \\ |S[(i-1)2^{-m}n,i2^mn]|\ge c2^{-m}n,
\end{gather*}
where $c$ is that of \eqref{andef}. 
By (2.17) of \cite{exact} and the proof of \cite[Proposition 2.1.2]{DS'}, 
\[    P\left(I_1^{(m)}\right)\ge 1-C(\log n)^{-3/2},\]
holds for all $m\in[1,M]$. 
Moreover, by translation invariance, we also have 
\begin{equation*}
    P\left(I_i^{(m)}\right)\ge 1-C(\log n)^{-3/2},
\end{equation*}
for all $m\in[0,M]$ and $i\in\{1,2,\cdots 2^{m+1}r\}$. 
Note that we can take $C$ uniformly in $n$, $m$ and $i$. 

Now, let $\displaystyle \mathcal{I}=\bigcap_{m=0}^M\bigcap_{i=1}^{2^m}\, I_i^{(m)}$. By taking the union bound, we have 
\begin{align}
    P(\mathcal{I}^c)&\le \sum_{m=0}^M C2^{m+1}r(\log n)^{-3/2}   \notag
    \\ &\le C2^{M+2}r(\log n)^{-3/2}\le Cr(\log n)^{-\frac{1}{2}+\varepsilon},\label{2colli-I}
\end{align}
where $C$ depends only on $\theta$. 
On the event $I_i^{(m)}$, it follows from the monotonicity of the effective resistance in subgraphs of $\mathcal{G}$ that 
\begin{align*}
    R_\mathcal{G}(S(k),S(l))\le R_{\mathcal{G}_{k,l}}(S(k),S(l))\le &\max_{(i-1)2^{-m}n\le k'\le l'\le i2^{-m}n} R_{\mathcal{G}_{k',l'}}(S(k'),S(l'))
    \\& \le 2^{-m+1}\Psi(n).
\end{align*}
for $(i-1)2^{-m}n\le k\le l\le i2^{-m}n$. This implies that 
\[    B_R(x,2^{-m+1}n\Psi(n))\supset S[(i-1)2^{-m}n,i2^{-m}n],\]
and, in particular, 
\[    V_R(x,2^{-m+1}n\Psi(n))\ge r2^{-m}n,\]
holds for any $x\in \mathcal{G}_{(i-1)2^{-m}n,i2^{-m}n}$. 
By monotonicity, we derive that on $\mathcal{I}$, 
\begin{equation}\label{lem-2colli-concl}    
    V_R(x,\eta n\Psi(n)) \ge \frac{c}{2}\eta n,
\end{equation}
holds for all $x\in B_{2rn}$ and $\eta\in((\log n)^{-1-\varepsilon},1]$. 

Finally, by \eqref{lem-2colli-inclusion}, \eqref{lem-2colli-concl} holds for all $x\in B_R(0,rn\Psi(n))$ and $\eta\in((\log n)^{-1-\varepsilon},1]$ on $F\cap G\cap \mathcal{I}$. 
Thus, the statement of the proposition follows from \eqref{2colli-FG} and \eqref{2colli-I}. 
\end{proof}

We are nearly ready to complete the proof of the main result. Before this, we introduce the limiting collision time process. In particular, the stochastic process naturally associated with $\mathcal{X}$, when the latter is viewed as a measured-resistance metric space (see \cite{Kigquasi} for background), is (up to a time-change by the deterministic constant 2) Brownian motion on $[0,\infty)$, started from 0 and reflected at the origin. Write $B^i=(B^i_t)_{t\geq 0}$, $i=1,2$, for two independent copies of this process. Moreover, let $(C_t^B)_{t\geq 0}$ be the positive continuous additive functional associated with the product process $(B^1,B^2)$, whose Revuz measure is given by the one-dimensional Hausdorff measure on $\{(x,x):\:x\in[0,\infty)\}$; this is the collision time process that appears in the statement of Theorem \ref{mr2} (see \cite[Section 7]{Noda} for details). It is a non-decreasing continuous-time stochastic process that increases only on the set of times $t$ where $B_t^1=B_t^2$, and we conclude the article by establishing that it is the scaling limit of the process introduced at \eqref{ctdef}.

\begin{proof}[Proof of Theorem \ref{mr2}]
Given that $(\tilde{X}^i_{c_Vtn^2\Psi(n)})$, $i=1,2$, are independent copies of the process naturally associated with $\mathcal{X}_n$, the result will follow from \cite[Theorem 9.13]{Noda} if we can check that \cite[Assumption 9.12]{Noda} holds in our setting. (We also discuss the appropriate reparameterisation below.) By Proposition \ref{xnconv}, we have that \cite[Assumption 9.12(i,ii)]{Noda} holds with $b_n=c_Vn$. Moreover, \cite[Assumption 9.12(iii)]{Noda} is given by \cite[Proposition 3.2]{CS}. And, \cite[Assumption 9.12(iv)]{Noda} with $\alpha_{r,\varepsilon}=\beta_{r,\varepsilon}=1$ is a direct consequence of Proposition \ref{lem-noda's condition}. Thus condition \cite[Assumption 9.12]{Noda} holds, and it follows that we have 
\[\left(\frac{1}{n\Psi(n)}C_{c_Vtn^2\Psi(n)}\right)_{t\geq 0}\buildrel{d}\over\rightarrow \left(C_t^B\right)_{t\geq 0}\]
in $D([0,\infty),\mathbb{R})$. The result is then obtained by a simple reparameterisation, using that $\Psi(n)\sim c_{\mathrm{res}}(\log n)^{-1/2}$, as noted at \eqref{cite-dssw-mainthm}. 
\end{proof}

\section*{Acknowledgments}

The authors would like to thank Umberto de Ambroggio for his contributions to the early part of the discussions that led to this article. The research was supported by JSPS Grant-in-Aid for Scientific Research (C)	24K06758, and the Research Institute for Mathematical Sciences, an International Joint Usage/Research Center located in Kyoto University. DS was supported by JSPS Grant-in-Aid for Scientific Research (C) 22K03336, JSPS Grant-in-Aid for Scientific Research (B) 22H01128
and 21H00989. SW was suppoeted by JSPS Grant-in-Aid for Research Activity Start-up 25K23335. 
    
\bibliographystyle{plain} 
\bibliography{4d}

@article {BCK,
    AUTHOR = {Barlow, M. T. and Coulhon, T. and Kumagai, T.},
     TITLE = {Characterization of sub-{G}aussian heat kernel estimates on
              strongly recurrent graphs},
   JOURNAL = {Comm. Pure Appl. Math.},
  FJOURNAL = {Communications on Pure and Applied Mathematics},
    VOLUME = {58},
      YEAR = {2005},
    NUMBER = {12},
     PAGES = {1642--1677},
      ISSN = {0010-3640,1097-0312},
   MRCLASS = {60J45 (60C05 60G50 60J10)},
  MRNUMBER = {2177164},
MRREVIEWER = {Endre\ Cs\'aki},
       DOI = {10.1002/cpa.20091},
       URL = {https://doi.org/10.1002/cpa.20091},
}

@article {BPS,
    AUTHOR = {Barlow, M. T. and Peres, Y. and Sousi, P.},
     TITLE = {Collisions of random walks},
   JOURNAL = {Ann. Inst. Henri Poincar\'e{} Probab. Stat.},
  FJOURNAL = {Annales de l'Institut Henri Poincar\'e{} Probabilit\'es et
              Statistiques},
    VOLUME = {48},
      YEAR = {2012},
    NUMBER = {4},
     PAGES = {922--946},
      ISSN = {0246-0203,1778-7017},
   MRCLASS = {60J10 (05C81 60J35 60J80)},
  MRNUMBER = {3052399},
MRREVIEWER = {Serguei\ Popov},
       DOI = {10.1214/12-AIHP481},
       URL = {https://doi.org/10.1214/12-AIHP481},
}

@article {Chenchen,
    AUTHOR = {Chen, X. and Chen, D.},
     TITLE = {Some sufficient conditions for infinite collisions of simple
              random walks on a wedge comb},
   JOURNAL = {Electron. J. Probab.},
  FJOURNAL = {Electronic Journal of Probability},
    VOLUME = {16},
      YEAR = {2011},
     PAGES = {no. 49, 1341--1355},
      ISSN = {1083-6489},
   MRCLASS = {60J10 (60G50 60K37)},
  MRNUMBER = {2827462},
MRREVIEWER = {Jonathon\ R.\ Peterson},
       DOI = {10.1214/EJP.v16-907},
       URL = {https://doi.org/10.1214/EJP.v16-907},
}

@article {CHllt,
    AUTHOR = {Croydon, D. A. and Hambly, B. M.},
     TITLE = {Local limit theorems for sequences of simple random walks on
              graphs},
   JOURNAL = {Potential Anal.},
  FJOURNAL = {Potential Analysis. An International Journal Devoted to the
              Interactions between Potential Theory, Probability Theory,
              Geometry and Functional Analysis},
    VOLUME = {29},
      YEAR = {2008},
    NUMBER = {4},
     PAGES = {351--389},
      ISSN = {0926-2601,1572-929X},
   MRCLASS = {60J35 (28A80 60G50)},
  MRNUMBER = {2453564},
MRREVIEWER = {Wilfried\ Hazod},
       DOI = {10.1007/s11118-008-9101-9},
       URL = {https://doi.org/10.1007/s11118-008-9101-9},
}

@article {CS,
    AUTHOR = {Croydon, D. A. and Shiraishi, D.},
     TITLE = {Scaling limit for random walk on the range of random walk in
              four dimensions},
   JOURNAL = {Ann. Inst. Henri Poincar\'e{} Probab. Stat.},
  FJOURNAL = {Annales de l'Institut Henri Poincar\'e{} Probabilit\'es et
              Statistiques},
    VOLUME = {59},
      YEAR = {2023},
    NUMBER = {1},
     PAGES = {166--184},
      ISSN = {0246-0203,1778-7017},
   MRCLASS = {60K37 (05C81 60K35 82B41)},
  MRNUMBER = {4533724},
       DOI = {10.1214/22-aihp1243},
       URL = {https://doi.org/10.1214/22-aihp1243},
}

@book {Kumsf,
    AUTHOR = {Kumagai, T.},
     TITLE = {Random walks on disordered media and their scaling limits},
    SERIES = {Lecture Notes in Mathematics},
    VOLUME = {2101},
      NOTE = {Lecture notes from the 40th Probability Summer School held in
              Saint-Flour, 2010,
              \'Ecole d'\'Et\'e{} de Probabilit\'es de Saint-Flour.
              [Saint-Flour Probability Summer School]},
 PUBLISHER = {Springer, Cham},
      YEAR = {2014},
     PAGES = {x+147},
      ISBN = {978-3-319-03151-4; 978-3-319-03152-1},
   MRCLASS = {60G50 (05C81 31C20 35K08 60K35 82B41)},
  MRNUMBER = {3156983},
MRREVIEWER = {Francis\ Comets},
       DOI = {10.1007/978-3-319-03152-1},
       URL = {https://doi.org/10.1007/978-3-319-03152-1},
}

@book {Lawb,
    AUTHOR = {Lawler, G. F.},
     TITLE = {Intersections of random walks},
    SERIES = {Modern Birkh\"auser Classics},
      NOTE = {Reprint of the 1996 edition},
 PUBLISHER = {Birkh\"auser/Springer, New York},
      YEAR = {2013},
     PAGES = {iv+223},
      ISBN = {978-1-4614-5971-2; 978-1-4614-5972-9},
   MRCLASS = {60G50 (60G17 60K99)},
  MRNUMBER = {2985195},
       DOI = {10.1007/978-1-4614-5972-9},
       URL = {https://doi.org/10.1007/978-1-4614-5972-9},
}

@book {LPW,
    AUTHOR = {Levin, D. A. and Peres, Y. and Wilmer, E. L.},
     TITLE = {Markov chains and mixing times},
      NOTE = {With a chapter by James G. Propp and David B. Wilson},
 PUBLISHER = {American Mathematical Society, Providence, RI},
      YEAR = {2009},
     PAGES = {xviii+371},
      ISBN = {978-0-8218-4739-8},
   MRCLASS = {60J10 (60-01 60J05 60K35 60K37 68U20 68W20)},
  MRNUMBER = {2466937},
MRREVIEWER = {Olle\ H\"aggstr\"om},
       DOI = {10.1090/mbk/058},
       URL = {https://doi.org/10.1090/mbk/058},
}

@book {LP,
    AUTHOR = {Lyons, R. and Peres, Y.},
     TITLE = {Probability on trees and networks},
    SERIES = {Cambridge Series in Statistical and Probabilistic Mathematics},
    VOLUME = {42},
 PUBLISHER = {Cambridge University Press, New York},
      YEAR = {2016},
     PAGES = {xv+699},
      ISBN = {978-1-107-16015-6},
   MRCLASS = {60C05 (05C05 05C81 28A80 60J10 60J80 60K35 82B41)},
  MRNUMBER = {3616205},
MRREVIEWER = {Laurent\ Miclo},
       DOI = {10.1017/9781316672815},
       URL = {https://doi.org/10.1017/9781316672815},
}

@article {KrishPeres,
    AUTHOR = {Krishnapur, M. and Peres, Y.},
     TITLE = {Recurrent graphs where two independent random walks collide
              finitely often},
   JOURNAL = {Electron. Comm. Probab.},
  FJOURNAL = {Electronic Communications in Probability},
    VOLUME = {9},
      YEAR = {2004},
     PAGES = {72--81},
      ISSN = {1083-589X},
   MRCLASS = {60B99 (60G50 60J10)},
  MRNUMBER = {2081461},
MRREVIEWER = {Kyle\ Siegrist},
       DOI = {10.1214/ECP.v9-1111},
       URL = {https://doi-org.kyoto-u.idm.oclc.org/10.1214/ECP.v9-1111},
}

@article {exact,
    AUTHOR = {Shiraishi, D.},
     TITLE = {Exact value of the resistance exponent for four dimensional
              random walk trace},
   JOURNAL = {Probab. Theory Related Fields},
  FJOURNAL = {Probability Theory and Related Fields},
    VOLUME = {153},
      YEAR = {2012},
    NUMBER = {1-2},
     PAGES = {191--232},
      ISSN = {0178-8051,1432-2064},
   MRCLASS = {82B41 (60G50)},
  MRNUMBER = {2925573},
MRREVIEWER = {Akira\ Sakai},
       DOI = {10.1007/s00440-011-0343-x},
       URL = {https://doi.org/10.1007/s00440-011-0343-x},
}

@article{exactcorr,
    author = {Croydon, D. A. and Shiraishi, D.},
    title = {Correction to: Exact value of the resistance exponent for four dimensional random walk trace},
    journal = {Probab. Theory Related Fields},
     VOLUME = {185},
      YEAR = {2023},
    NUMBER = {1-2},
     PAGES = {699--704},
}

@article {HP,
    AUTHOR = {Hutchcroft, T. and Peres, Y.},
     TITLE = {Collisions of random walks in reversible random graphs},
   JOURNAL = {Electron. Commun. Probab.},
  FJOURNAL = {Electronic Communications in Probability},
    VOLUME = {20},
      YEAR = {2015},
     PAGES = {no. 63, 6},
      ISSN = {1083-589X},
   MRCLASS = {60J10 (05C80 05C81 60K37)},
  MRNUMBER = {3399814},
MRREVIEWER = {Rongfeng\ Sun},
       DOI = {10.1214/ECP.v20-4330},
       URL = {https://doi-org.kyoto-u.idm.oclc.org/10.1214/ECP.v20-4330},
}

@article {CDA,
    AUTHOR = {Croydon, D. A. and De Ambroggio, U.},
     TITLE = {Triple collisions on a comb graph},
   JOURNAL = {Electron. J. Probab.},
  FJOURNAL = {Electronic Journal of Probability},
    VOLUME = {30},
      YEAR = {2025},
     PAGES = {Paper No. 130, 22},
      ISSN = {1083-6489},
   MRCLASS = {60J10 (05C81 60G50 60J35)},
  MRNUMBER = {4955057},
MRREVIEWER = {Abdessatar\ Souissi},
       DOI = {10.1214/25-ejp1392},
       URL = {https://doi-org.kyoto-u.idm.oclc.org/10.1214/25-ejp1392},
}

@unpublished{Noda,
    author = {Noda, R.},
    title = {Convergence of space-time occupation measures of stochastic processes and its application to collisions},
    note = {preprint available at arXiv:2510.19936}}

@article {CroyRW,
    AUTHOR = {Croydon, D. A.},
     TITLE = {Random walk on the range of random walk},
   JOURNAL = {J. Stat. Phys.},
  FJOURNAL = {Journal of Statistical Physics},
    VOLUME = {136},
      YEAR = {2009},
    NUMBER = {2},
     PAGES = {349--372},
      ISSN = {0022-4715,1572-9613},
   MRCLASS = {60G50 (60K35 60K37 82B41)},
  MRNUMBER = {2525250},
MRREVIEWER = {Neal\ Madras},
       DOI = {10.1007/s10955-009-9785-2},
       URL = {https://doi-org.kyoto-u.idm.oclc.org/10.1007/s10955-009-9785-2},
}

@article{DS-SW-4d,
      title={Graph distance and effective resistance of the four-dimensional random walk trace}, 
      author={Shiraishi, D. and Watanabe, S.},
      year={2026},
      journal = {arXiv preprint},
        volume = {arXiv:2602.17076},
      eprint={math.PR/2602.17076},
      pubstate      = {\bibstring{preprint}},
      archivePrefix={arXiv},
      primaryClass={math.PR},
      url={https://arxiv.org/abs/2602.17076}, 
}

@article {ALWgap,
    AUTHOR = {Athreya, S. and L\"ohr, W. and Winter, A.},
     TITLE = {The gap between {G}romov-vague and {G}romov-{H}ausdorff-vague
              topology},
   JOURNAL = {Stochastic Process. Appl.},
  FJOURNAL = {Stochastic Processes and their Applications},
    VOLUME = {126},
      YEAR = {2016},
    NUMBER = {9},
     PAGES = {2527--2553},
      ISSN = {0304-4149,1879-209X},
   MRCLASS = {60B05 (05C80 05C81 60B10 60B99 60J80)},
  MRNUMBER = {3522292},
MRREVIEWER = {David\ A.\ Croydon},
       DOI = {10.1016/j.spa.2016.02.009},
       URL = {https://doi-org.kyoto-u.idm.oclc.org/10.1016/j.spa.2016.02.009},
}

@article {Khezeli,
    AUTHOR = {Khezeli, A.},
     TITLE = {A unified framework for generalizing the {G}romov-{H}ausdorff
              metric},
   JOURNAL = {Probab. Surv.},
  FJOURNAL = {Probability Surveys},
    VOLUME = {20},
      YEAR = {2023},
     PAGES = {837--896},
      ISSN = {1549-5787},
   MRCLASS = {60B05 (51F30)},
  MRNUMBER = {4671147},
       DOI = {10.1214/20-ps340},
       URL = {https://doi-org.kyoto-u.idm.oclc.org/10.1214/20-ps340},
}

@unpublished{Nodatop,
    author = {Noda, R.},
    title = {Metrization of {G}romov-{H}ausdorff-type topologies on boundedly-compact metric spaces},
    note = {preprint available at arXiv:2404.19681}}

@article {Kigquasi,
    AUTHOR = {Kigami, J.},
     TITLE = {Resistance forms, quasisymmetric maps and heat kernel
              estimates},
   JOURNAL = {Mem. Amer. Math. Soc.},
  FJOURNAL = {Memoirs of the American Mathematical Society},
    VOLUME = {216},
      YEAR = {2012},
    NUMBER = {1015},
     PAGES = {vi+132},
      ISSN = {0065-9266,1947-6221},
      ISBN = {978-0-8218-5299-6},
   MRCLASS = {30L10 (28A80 31C25 35K05 35K08 60J45)},
  MRNUMBER = {2919892},
MRREVIEWER = {Leonid\ V.\ Kovalev},
       DOI = {10.1090/S0065-9266-2011-00632-5},
       URL = {https://doi-org.kyoto-u.idm.oclc.org/10.1090/S0065-9266-2011-00632-5},
}

@book {Kall,
    AUTHOR = {Kallenberg, O.},
     TITLE = {Foundations of modern probability},
    SERIES = {Probability and its Applications (New York)},
   EDITION = {Second},
 PUBLISHER = {Springer-Verlag, New York},
      YEAR = {2002},
     PAGES = {xx+638},
      ISBN = {0-387-95313-2},
   MRCLASS = {60-01},
  MRNUMBER = {1876169},
MRREVIEWER = {Klaus\ D.\ Schmidt},
       DOI = {10.1007/978-1-4757-4015-8},
       URL = {https://doi-org.kyoto-u.idm.oclc.org/10.1007/978-1-4757-4015-8},
}

@article {Nguyen,
    AUTHOR = {Nguyen, D.-T.},
     TITLE = {Scaling limit of the collision measures of multiple random
              walks},
   JOURNAL = {ALEA Lat. Am. J. Probab. Math. Stat.},
  FJOURNAL = {ALEA. Latin American Journal of Probability and Mathematical
              Statistics},
    VOLUME = {20},
      YEAR = {2023},
    NUMBER = {2},
     PAGES = {1385--1410},
      ISSN = {1980-0436},
   MRCLASS = {60G50 (60F05 60G57 82C05)},
  MRNUMBER = {4683379},
       DOI = {10.30757/alea.v20-52},
       URL = {https://doi-org.kyoto-u.idm.oclc.org/10.30757/alea.v20-52},
}

@article {DStams,
    AUTHOR = {Shiraishi, D.},
     TITLE = {Random walk on non-intersecting two-sided random walk is subdiffusive
in low dimensions},
   JOURNAL = {Transactions of the American Mathematical Society},
  FJOURNAL = {Transactions of American Mathematical Society},
    VOLUME = {370},
      YEAR = {2018},
    NUMBER = {7},
     PAGES = {4525--4558},
      ISSN = {},
   MRCLASS = {},
  MRNUMBER = {},
       DOI = {},
       URL = {},
}

\end{document}